\newtheorem{theorem}{Theorem}[section]
\newtheorem{proposition}[theorem]{Proposition}
\newtheorem{corollary}[theorem]{Corollary}
\newtheorem{lemma}[theorem]{Lemma}
\newtheorem{remark}[theorem]{Remark}
\newenvironment{proof}[1][Proof]{\textbf{#1.} }{\ \rule{0.5em}{0.5em}}
\newcommand{\stkout}[1]{\ifmmode\text{\sout{\ensuremath{#1}}}\else\sout{#1}\fi}
\begin{document}

 \title{Convergence  for weighted sums of L\"uroth type random variables  }%\thanks{The support of Gruppo Nazionale per l'Analisi Matematica, la Probabilit\`{a} e le loro Applicazioni (GNAMPA) of the Istituto Nazionale di Alta Matematica (INDAM) is  acknowledged.} 
 	\author{Rita Giuliano \footnote{Dipartimento di
		Matematica, Universit\`a di Pisa, Largo Bruno
		Pontecorvo 5, I-56127 Pisa, Italy (email: rita.giuliano@unipi.it)}~~and Milto Hadjikyriakou\footnote{School of Sciences, University of Central Lancashire, Cyprus campus, 12-14 University Avenue, Pyla, 7080 Larnaka, Cyprus (email:
		mhadjikyriakou@uclan.ac.uk).}} 	
 \maketitle	

\begin{abstract}
 In this work we prove an asymptotic result, that under some conditions on the involved distribution functions, is valid for any Oppenheim expansion,  extending a classical result proven by W. Vervaat in 1972 for denominators of the L\"uroth case. Furthermore, we study the convergence in distribution of weighted sums of a sequence of independent random variables.    Although the  result   is of its own interest,  in the present setting  it is used   to prove   convergence in distribution   of specific  sequences of random variables   generalizing known results obtained for  L\"uroth  random variables. 
\end{abstract}

\textbf{Keywords}: Oppenheim expansions,   exact weak law, L\"uroth sequence 

\section{Introduction}

In 1883 J. L\"uroth  (\cite{L1883}) showed that every real number $x \in (0,1]$ admits the following series expansion:
\begin{equation*}\label{espansione}
x =\frac{1}{d_1}+ \frac{1}{(s_1)d_2 }+ \frac{1}{(s_1s_2)d_3 }+ \cdots + \frac{1}{(s_1\cdots  s_n)d_{n+1}}+\cdots = \sum_{k=1}^\infty \frac{1}{\left(\prod_{h=1}^{k-1}s_h\right)d_k} ,
\end{equation*}
where $d_j= d_j(x)$ is a sequence of  integers $\geqslant 2$ and $s_n = d_n(d_n-1)$, $ n \geqslant 1$. The digits $d_n(x)$   can be viewed as random variables on the probability space $(\Omega, \mathcal{A}, P)= \big([0,1], \mathcal{B}([0,1]), \mathbb{L}\big)$, where $ \mathcal{B}([0,1])$ (where $ \mathcal{B}([0,1])$ is the $\sigma$-algebra of Borel subsets of $[0,1]$ and $\mathbb{L}=$ Lebesgue measure in $[0,1]$)  and as such they are    denoted   with capital letters $D_n$.  

\medskip

\noindent
Define
$$D^{( 1)}_n = \frac{\sum_{k=1}^nD _k }{n}.$$The following two results are  well known (see \cite {G1976}, pp. 67--68 for the first one, and \cite{V1972}, p.117 or \cite {G1976}, pp. 67--68 for the second one):
\begin{theorem}\sl\label{convergenzainprobabilita}
	As $n \to \infty$
	$$\frac{ D^{( 1)}_n}{  \log n}\mathop {\longrightarrow }^P 1.$$
\end{theorem}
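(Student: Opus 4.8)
The plan is to regard $S_n:=\sum_{k=1}^n D_k$ as a sum of i.i.d.\ random variables with infinite mean and to establish its relative stability with normalising sequence $n\log n$ by a truncation argument (a Feller/Khinchin type weak law). First recall that the Lüroth digits form an i.i.d.\ sequence with $P(D_1=j)=\tfrac1{j(j-1)}$, $j\ge 2$: the event $\{D_1=j\}$ is the interval $\big(\tfrac1j,\tfrac1{j-1}\big]$, of Lebesgue length $\tfrac1{j-1}-\tfrac1j=\tfrac1{j(j-1)}$, while independence and the identity of the laws follow from the fact that the Lüroth map preserves $\mathbb L$ and acts as a full shift on the digit sequence. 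Hence $P(D_1>m)=\sum_{j>m}\tfrac1{j(j-1)}=\tfrac1m$ for integer $m$, so that $E[D_1]=\sum_{j\ge 2}\tfrac1{j-1}=+\infty$; in particular the statement is \emph{not} a consequence of the classical weak law, and one must truncate and rescale by a slowly varying factor.

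Put $b_n:=n\log n$ and, for $1\le k\le n$, set $\bar D_{n,k}:=D_k\mathbf 1_{\{D_k\le b_n\}}$, $\bar S_n:=\sum_{k=1}^n\bar D_{n,k}$ and $a_n:=nE[\bar D_{n,1}]$. Two elementary estimates are needed. First, $P(S_n\ne \bar S_n)\le\sum_{k=1}^n P(D_k>b_n)\sim n/b_n=1/\log n\to 0$. Second, writing $H_M:=\sum_{i=1}^M 1/i$, for large $m$ one has
$$E\big[D_1^2\mathbf 1_{\{D_1\le m\}}\big]=\sum_{j=2}^{\lfloor m\rfloor}\frac{j}{j-1}=\big(\lfloor m\rfloor-1\big)+H_{\lfloor m\rfloor-1}\le 2m,$$
so that $\Var(\bar S_n)\le nE[\bar D_{n,1}^2]\le 2nb_n$ and hence $\Var(\bar S_n)/b_n^2\le 2n/b_n\to 0$. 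Chebyshev's inequality then gives $(\bar S_n-a_n)/b_n\mathop{\longrightarrow}^P 0$, and combining this with the first estimate upgrades it to $(S_n-a_n)/b_n\mathop{\longrightarrow}^P 0$.

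It remains to identify $a_n$. Since $E[\bar D_{n,1}]=E\big[D_1\mathbf 1_{\{D_1\le b_n\}}\big]=\sum_{j=2}^{\lfloor b_n\rfloor}\tfrac1{j-1}=H_{\lfloor b_n\rfloor-1}=\log b_n+\gamma+o(1)\sim\log n$ (because $\log b_n=\log n+\log\log n\sim\log n$), we get $a_n/b_n=H_{\lfloor b_n\rfloor-1}/\log n\to 1$, whence
$$\frac{D^{(1)}_n}{\log n}=\frac{S_n}{n\log n}=\frac{a_n}{b_n}+\frac{S_n-a_n}{b_n}\mathop{\longrightarrow}^P 1.$$
The one genuinely delicate point is the choice of the truncation (equivalently, normalising) level: truncating at level $n$ is useless, since $nP(D_1>n)=1\not\to 0$, i.e.\ with probability bounded away from zero one of $D_1,\dots,D_n$ is already of size comparable to $n\log n$ and contributes a non-negligible fraction of $\log n$ to the normalised sum. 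What makes the argument go through is any level $b_n$ with $n=o(b_n)$ and $\log b_n\sim\log n$, for which the ``no truncation loss'' condition holds while the centering $a_n$ stays asymptotic to $n\log n$; the remaining estimates are then routine harmonic‑sum asymptotics.
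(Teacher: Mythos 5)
Your argument is correct, and it is genuinely different from the route the paper takes. The paper never proves Theorem \ref{convergenzainprobabilita} directly: it quotes it from the literature and then recovers (and generalizes) it as the special case $\rho_n\equiv 1$, $a_{k,n}=1/n$, $\alpha_k\equiv 1$, $\ell=1$ of Theorem \ref{1} and Corollary \ref{Luroth}, whose proof runs through characteristic functions --- the key analytic input being the uniform asymptotics $\frac1t\int_0^1\sin(t/u)\,{\rm d}F_n(u)\sim-\alpha_n\log|t|$ of Corollary \ref{general1}, fed into a term-by-term expansion of $\log\psi_{T_n}$. You instead give the classical degenerate-convergence proof by truncation: the exact tail $P(D_1>m)=1/m$ yields $nP(D_1>b_n)\to 0$ and $E[D_1^2\mathbf 1_{\{D_1\le b_n\}}]=O(b_n)$ for $b_n=n\log n$, Chebyshev handles the truncated sum, and the centering is identified as the harmonic sum $H_{\lfloor b_n\rfloor-1}\sim\log n$; all of these computations check out. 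What each approach buys: yours is elementary, self-contained, and makes transparent where the $\log n$ normalisation comes from (it is literally the truncated mean); the paper's Fourier-analytic route is heavier but extends with no change to weighted sums $\sum_k a_{k,n}Y_k$, to non-identically distributed variables satisfying conditions (i)--(ii), and --- via Theorem \ref{teoremadistanza} --- to dependent Oppenheim expansions, none of which your argument reaches without further work. One minor caveat on your closing remark, which is commentary and does not affect the proof: when some $D_k$ exceeds $n$ (probability $\to 1-e^{-1}$), that digit is typically of order $n$, not $n\log n$, and contributes only $O(1/\log n)$ to $S_n/(n\log n)$; the actual obstruction to truncating at level $n$ is that $P(S_n\ne\bar S_n)\not\to 0$, so the reduction to the truncated sum is lost (and indeed a slightly longer argument, bounding the excess $\sum_k D_k\mathbf 1_{\{D_k>n\}}$ directly, would rescue truncation at level $n$ as well).
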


\begin{theorem}\sl\label{convergenzainlegge}
	As $n \to \infty$ 
	$$ D^{( 1)}_n-\log n-1\mathop {\longrightarrow }^\mathcal{L} \mu,$$
	where $\mu$ is the probability law on $[0,1]$ determined by the characteristic function
	$$\psi(t) =\exp \left(-\frac{1}{2}\pi |t|-it\log|t|\right).$$
\end{theorem}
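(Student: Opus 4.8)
My plan is to argue entirely through characteristic functions and to conclude by L\'evy's continuity theorem. The starting point is the classical fact (see \cite{G1976}) that, with respect to Lebesgue measure, the L\"uroth digits $D_1,D_2,\dots$ are independent and identically distributed, with $P(D_1=j)=\frac1{j(j-1)}$ for every integer $j\ge 2$; in particular $P(D_1\ge j)=\frac1{j-1}$, so $D_1$ has a Cauchy-type tail and $\mathbb E D_1=\infty$, which is exactly why the natural centering of $D^{(1)}_n$ is $\log n$ rather than a constant. Let $\phi(t)=\mathbb E[e^{itD_1}]=\sum_{j\ge 2}\frac{e^{itj}}{j(j-1)}$. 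The first step is to compute $\phi$ in closed form: summing $\sum_{j\ge2}z^j/(j(j-1))$ by partial fractions gives $z+(1-z)\log(1-z)$, hence, by Abel's theorem and with the principal branch of the logarithm,
\[
\phi(t)=e^{it}+(1-e^{it})\log(1-e^{it}).
\]

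Since the $D_k$ are i.i.d., the characteristic function of $D^{(1)}_n-\log n-1=\frac1n\sum_{k=1}^nD_k-\log n-1$ is $f_n(t)=e^{-it(\log n+1)}\phi(t/n)^n$. I would first record the crude estimate $\phi(t/n)-1=O\!\big(|t|\log n/n\big)$, obtained by splitting the series for $\phi(t/n)-1$ at $j\approx n/|t|$ and using $|e^{i\theta}-1|\le\min(|\theta|,2)$; this justifies taking logarithms for all large $n$ and shows $n(\phi(t/n)-1)^2\to0$, so that $\log f_n(t)=-it(\log n+1)+n\big(\phi(t/n)-1\big)+o(1)$. Everything then reduces to the asymptotics of $n(\phi(t/n)-1)$. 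Using the closed form and assuming first $t>0$, set $s=t/n\downarrow0$ and split $n(\phi(s)-1)=n(e^{is}-1)+n(1-e^{is})\log(1-e^{is})$; the first summand tends to $it$. For the second, the identity $1-e^{is}=2\sin(s/2)\,e^{i(s/2-\pi/2)}$ gives $\log(1-e^{is})=\log\!\big(2\sin(s/2)\big)+i(s/2-\pi/2)=\log s-i\tfrac\pi2+O(s)$ while $n(1-e^{is})=-it+O(s)$; multiplying out and writing $\log s=\log t-\log n$ yields $n(1-e^{is})\log(1-e^{is})=it\log n-it\log t-\tfrac\pi2 t+o(1)$. Substituting back, the $\pm it\log n$ and $\pm it$ terms cancel and $\log f_n(t)\to-it\log t-\tfrac\pi2 t$ for $t>0$. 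Because $D^{(1)}_n-\log n-1$ is real we have $f_n(-t)=\overline{f_n(t)}$, so for every $t\ne0$, $\log f_n(t)\to-it\log|t|-\tfrac\pi2|t|$, i.e. $f_n(t)\to\psi(t)$, and $f_n(0)=1=\psi(0)$. Since $\psi$ is continuous at $0$, L\'evy's continuity theorem gives $D^{(1)}_n-\log n-1\mathop{\longrightarrow}^{\mathcal L}\mu$.

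The bookkeeping parts — the generating-function identity and the crude tail bound — are routine. The delicate step, which I expect to be the main obstacle, is the expansion of $n(1-e^{it/n})\log(1-e^{it/n})$: one must handle the branch of the logarithm correctly, see that the coefficient $\pi|t|/2$ of the stable part originates from $\arg(1-e^{it/n})\to-\pi/2$ and the $\log n$ from $\log|1-e^{it/n}|\sim\log(t/n)$, and check that each error term is genuinely $o(1)$ rather than merely $o(\log n)$, since a surviving $o(\log n)$ would spoil the cancellation of the centering. A more conceptual but less self-contained alternative would be to invoke the classical domain-of-attraction theory for sums of i.i.d.\ random variables in the domain of attraction of a strictly stable law of index $1$ (the one-sided Cauchy case), which yields a limit of precisely this form with centering $\sim\log n$; the direct characteristic-function computation above is shorter and keeps the paper self-contained.
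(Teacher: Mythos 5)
Your argument is correct, and it is in substance the same argument the paper itself relies on: Theorem \ref{convergenzainlegge} is quoted from the literature, but the paper reproves (and generalizes) it via Theorem \ref{generalres} together with Corollary \ref{Cor2}, whose specialization to $\beta_n\equiv 0$ and $a_{k,n}=1/n$ gives $p_{n,k}=1/k$, $P(Z_n=k)=\frac{1}{k(k-1)}$, $\kappa=\ell=1$, vanishing integral term, and centering $1+\log n$ --- exactly the statement at hand. Your closed form $\phi(t)=e^{{\rm i}t}+(1-e^{{\rm i}t})\log(1-e^{{\rm i}t})$ is the identity $\phi(t)=e^{{\rm i}t}+(e^{{\rm i}t}-1)h(e^{{\rm i}t})$ with $h(z)=-\log(1-z)$ used in Corollary \ref{Cor2}, and your expansion $\log(1-e^{{\rm i}s})=\log s-{\rm i}\tfrac{\pi}{2}+O(s)$, which produces the $\tfrac{\pi}{2}|t|$ and $\log|t|$ terms and cancels the $\log n$ centering, is the same branch analysis the paper carries out in Lemma \ref{approx3} and in the treatment of the term $I_4$. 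The individual steps you flag as delicate all check out: the error in the product $(-{\rm i}t+O(s))(\log s-{\rm i}\tfrac{\pi}{2}+O(s))$ is $O(s|\log s|)=o(1)$ for fixed $t$, not merely $o(\log n)$, and the bound $\phi(t/n)-1=O(|t|\log n/n)$ indeed gives $n|\phi(t/n)-1|^2\to 0$, so replacing $n\log\phi(t/n)$ by $n(\phi(t/n)-1)$ is legitimate. The only difference is one of economy: you exploit the i.i.d.\ structure to work with a single closed-form characteristic function, whereas the paper's route is built to handle non-identically distributed summands and general weights, at the cost of the auxiliary Lemmas \ref{approx1}--\ref{approx5}.
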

A similar result to Theorem \ref{convergenzainlegge} concerning the sequence of digits $\{A_n\}_{n \geq 1}$ of the continued fraction expansion of  an irrational number $x\in (0,1)$  is the one that follows, proved by P. L\'evy in \cite{L1952}.

\begin{theorem}\sl\label{Levy}$$\frac{1}{n}\sum_{k=1}^n A_k -\frac{\log n}{\log 2}\mathop {\longrightarrow }^\mathcal{L} \mu,$$
	where $\mu$ is the probability law on $[0,1]$ determined by the characteristic function
	$$\exp \left(-\frac{\pi |t|}{2 \log 2 }  - \frac{i t \log |t|}{\log 2}- \frac{i\gamma t}{\log 2}\right)$$
and  $\gamma = 0.577 \dots$ is the Euler-Mascheroni constant.
\end{theorem}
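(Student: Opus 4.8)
The essential difference from the L\"uroth situation of Theorem~\ref{convergenzainlegge} is that the partial quotients $A_1,A_2,\dots$ are \emph{not} independent under Lebesgue measure --- only weakly dependent --- so the whole difficulty is to tame this dependence. First I would pass from $\mathbb L$ to the Gauss measure $\gamma(dx)=(\log 2)^{-1}(1+x)^{-1}\,dx$, under which $(A_n)_{n\ge1}$ is stationary. This is legitimate for a distributional limit of $\frac1n\sum_{k=1}^nA_k$ because (i) $\gamma$ and $\mathbb L$ are mutually absolutely continuous with density bounded away from $0$ and $\infty$; (ii) for fixed $m$ the average $\frac1n\sum_{k=1}^mA_k\to0$, so only the tail $\frac1n\sum_{k>m}A_k=\frac1n\sum_{j=1}^{n-m}A_j\circ T^m$ ($T$ the Gauss map $Tx=\{1/x\}$) matters; and (iii) by the Kuzmin--L\'evy--Wirsing estimates the density of $\mathbb L\circ T^{-m}$ converges uniformly and exponentially fast to the Gauss density. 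Under $\gamma$, since $A_1(x)=\lfloor 1/x\rfloor$,
\[
P(A_1\ge k)=\frac{1}{\log 2}\int_0^{1/k}\frac{dx}{1+x}=\log_2\Bigl(1+\frac1k\Bigr)=\frac1{k\log 2}+O(k^{-2}),
\]
so $A_1$ lies in the domain of attraction of a one-sided $1$-stable law with scale constant $c=(\log 2)^{-1}$.

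Secondly I would remove the dependence. The digit sequence is $\psi$-mixing with exponentially decaying mixing coefficients (classical, a consequence of the Gauss--Kuzmin--Wirsing theory). Via a blocking argument --- split $\{1,\dots,n\}$ into long blocks separated by short spacer blocks, discard the spacers, treat the blocks as independent up to a mixing error, and use the exponential rate to rule out clustering of the rare ``large partial quotient'' events --- one shows that, centered by $\frac{\log n}{\log 2}$, the average $\frac1n\sum_{k=1}^nA_k$ has the same limiting distribution as $\frac1n\sum_{k=1}^n\widetilde A_k$ for i.i.d.\ copies $\widetilde A_k$ of $A_1$ under $\gamma$. (Alternatively one may invoke directly the limit theory for sums of $\psi$-mixing variables attracted to a stable law.)

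The third part is an i.i.d.\ characteristic-function computation. Writing $q_j:=P(\widetilde A_1\ge j)=\log_2(1+1/j)$, Abel summation gives $E[e^{it\widetilde A_1/n}]=1+(e^{it/n}-1)\sum_{j\ge1}q_j\,e^{it(j-1)/n}$. Decompose $q_j=(j\log 2)^{-1}+r_j$ with $r_j=O(j^{-2})$; since $\sum_{j=1}^N q_j=\log_2(N+1)$ and $\sum_{j=1}^N(j\log 2)^{-1}=H_N/\log 2$ one finds $\sum_{j\ge1}r_j=\lim_N\bigl(\log(N+1)-H_N\bigr)/\log 2=-\gamma/\log 2$, so that $\sum_j r_j e^{it(j-1)/n}\to-\gamma/\log 2$, while the main part is $\frac{e^{-it/n}}{\log 2}\sum_{j\ge1}\frac{e^{itj/n}}{j}=-\frac{e^{-it/n}}{\log 2}\Log\bigl(1-e^{it/n}\bigr)=\frac1{\log 2}\bigl(\log n-\log|t|+\tfrac{i\pi}{2}\operatorname{sgn}(t)\bigr)+o(1)$. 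Multiplying by $e^{it/n}-1=\frac{it}{n}(1+O(n^{-1}))$ and raising to the $n$th power (the quadratic remainder is $O((\log n)^2/n)=o(1)$) yields
\[
n\log E\bigl[e^{it\widetilde A_1/n}\bigr]=\frac{it\log n}{\log 2}-\frac{it\log|t|}{\log 2}-\frac{\pi|t|}{2\log 2}-\frac{i\gamma t}{\log 2}+o(1),
\]
hence $E\bigl[\exp\bigl(it(\tfrac1n\sum_{k=1}^n\widetilde A_k-\tfrac{\log n}{\log 2})\bigr)\bigr]\to\exp\bigl(-\tfrac{\pi|t|}{2\log 2}-\tfrac{it\log|t|}{\log 2}-\tfrac{i\gamma t}{\log 2}\bigr)$, and the L\'evy continuity theorem concludes.

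The main obstacle is the second step, together with the measure change in the first: the index-$1$ stable regime is exactly the one in which the centering is ``fragile,'' and one must be sure that the exponential $\psi$-mixing is strong enough that the dependence introduces neither a spurious jump-clustering term nor an extra additive shift in the $\frac{\log n}{\log 2}$ normalization. The characteristic-function estimate of the third step, although it requires care with the branch of the logarithm and with the several $o(1)$ remainders, is routine once the reduction to i.i.d.\ summands has been justified.
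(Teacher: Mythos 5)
The paper does not prove this statement: Theorem~\ref{Levy} is quoted as a classical result of P.~L\'evy with a citation to \cite{L1952}, so there is no in-paper argument to compare yours against. Judged on its own terms, your outline is the standard modern route and its computational core is correct. The third step is sound: the Abel summation, the decomposition $q_j=(j\log 2)^{-1}+r_j$ with $\sum_j r_j=-\gamma/\log 2$, the identification of the main part with $-\Log(1-e^{it/n})$ on the principal branch, and the bookkeeping of the $\operatorname{sgn}(t)$ term all check out and reproduce exactly the stated characteristic function, including the $-i\gamma t/\log 2$ shift. The first step (transfer from Lebesgue to Gauss measure via the shift $T^m$ and the uniform exponential Gauss--Kuzmin--Wirsing convergence of the density of $\mathbb L\circ T^{-m}$) is also a valid argument; note that mere mutual absolute continuity would not suffice to transfer a distributional limit, so it is the $T^m$-shift argument, not point (i), that does the work.

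The one genuine gap is the one you yourself flag: step two, the reduction from the $\psi$-mixing stationary sequence to i.i.d.\ copies, is only gestured at, and in the index-$1$ stable regime it is the entire difficulty --- the block centerings are logarithmic in the block length and do not simply add, so one must control both the spacer contributions and the drift in the centering, not just rule out clustering of large digits. As written, "treat the blocks as independent up to a mixing error" is a program, not a proof. The honest way to close this is to invoke the existing stable limit theory for exponentially $\psi$-mixing sequences (e.g.\ Heinrich's work \cite{H1987}, which the paper itself cites precisely for rates in this theorem, or Samur's functional limit theorems for mixing arrays); with such a citation your outline becomes a complete proof. It is worth noting that the paper's own general machinery (Theorem~\ref{generalres}) assumes independent summands and the authors explicitly leave the dependent case open, so your diagnosis that the dependence is the sole obstacle separating Theorem~\ref{Levy} from the L\"uroth case of Theorem~\ref{convergenzainlegge} is exactly right.
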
 
 An estimate for the convergence described above can also be found in the literature (see Theorem 2 in \cite{H1987}). 

\medskip
\noindent
In the case of the L\"uroth sequence $\{D_n\}_{n \geq 1}$, Theorems \ref{convergenzainprobabilita} and \ref{convergenzainlegge} have been extended in \cite{G2016} by considering sums of the type
$$\sum_{k=1}^n a_{k,n}D_n,$$
where $\{a_{k,n}\}_{n \geq 1\atop k\leq n}$ is an array of positive numbers satisfying a suitable set of  assumptions. The particular class of $\{a_{k,n}\}_{n \geq 1\atop k\leq n}$ allows to apply the results of \cite{G2016} to the $r${-iterated $\alpha$-weighted  means}  of $\{D_n\}_{n \geq 1}$, i.e. to sequences built as follows: for $\alpha <1$,   the $r${-iterated $\alpha$-weighted  means  of $D_n$ are defined inductively by
\begin{equation}\label{equazione10}
D^{(\alpha,0)}_n = D_n; \qquad D^{(\alpha,r+1)}_n = \frac{\sum_{k=1}^nw_k D^{(\alpha,r)}_k }{W_n}, \quad r=0,1,2, \dots 
\end{equation}
where  
	$$w_k= \frac{1}{k^\alpha}, \qquad W_n= \sum_{k=1}^nw_k= \sum_{k=1}^n \frac{1}{k^\alpha}.$$}

\noindent
This remark is of interest also for the purposes of the present paper, since   the same kind of array $\{a_{k,n}\}_{n \geq 1\atop k\leq n}$ is used here.

\medskip
\noindent
For the  L\"uroth sequence, notice that the variables $\{D_n\}_{n \geq 1}$ are independent and identically distributed with discrete law
$$P(D_n =k) =\frac{1}{k(k-1)}, \qquad k=2, 3, \dots $$
 whereas its continuous analogue  is the law with density (w.r.t. Lebesgue measure)
$$f(x) = \frac{1}{x^2}, \qquad x\geq 1,$$
i.e. the probability density function  of the reciprocal of a random variable with uniform law on $(0,1)$.

\bigskip
\noindent
Motivated by this observation, in the present paper we first consider an independent sequence $ \{U_n\}_{n \geq 1 } $ such that, for every $n$, $U_n$ has distribution $F_n$ on $(0,1)$ (not necessarily  absolutely con\-ti\-nuous) and study the sequences of random variables
$$\sum_{k=1}^n a_{k,n}Y_n,$$
where $Y_n = \frac{1}{U_n},$ and $\{a_{k,n}\}_{n \geq 1\atop k\leq n}$ is an array of positive numbers satisfying the same assumptions as before.

\medskip
\noindent
On the sequence $\{F_n\}_{n \geq 1 }$ we impose reasonable hypotheses in order to  ensure that the analogues (or better the extensions) of Theorems \ref{convergenzainprobabilita} and \ref{convergenzainlegge} hold. This approach is fruitful since  our hypotheses  are useful also for another reason: they lead to a better understanding of the origin of the constants that appear in the limit laws. The two generalizations concerning $Y_n$ are Theorem \ref{1} and Corollary \ref{Cor1} respectively.

\medskip
\noindent
As it is well known (see \cite{G2018}) it may happen that the  results obtained for $\{Y_n\}_{n \geq 1 }$ give rise to identical results for any sequence of {\it Oppenheim expansions} $\{R_n\}_ {n \geq 1}$ suitably connected with $\{Y_n\}_ {n \geq 1 }$. This is achieved via a useful trick, presented  in  \cite{G2018} and reported here as Theorem \ref{teoremadistanza}. Oppenheim expansions are fully described in \cite{G2018}  and we recall here the most famous among them: L\"uroth (\cite{L1883}, \cite{G1976}) Engel and Sylvester series expansions (\cite{ERS1958}, \cite{G1976}); and Engel(\cite{KW2004}) and Sylvester (\cite{FWW2007}) continued fraction expansions.

\medskip
\noindent
The above mentioned phenomenon  i.e., the ability of moving from $Y_n$ to $R_n$, turns out to occur in what concerns the extension of Theorem \ref{convergenzainprobabilita}, thus for $\{R_n\}_{n \geq 1 }$ we obtain Theorem \ref{2} as a  direct  consequence of Theorem \ref{1}, which is a convergence result for the sequence $Y_n$.   We stress the fact that this kind of results are completely new for \lq\lq general\rq\rq \ Oppenheim expansions,  and only some particular cases   are  available in the literature; see \cite{G2018} and the references therein.
 However, the trick of  using  Theorem \ref{teoremadistanza} to derive Theorem \ref{2}, doesn't work if we look for the extension of Theorem \ref{convergenzainlegge}.  In this case  things are more complicated,  and a partial extension of this result has been obtained for sequences of independent random variables, by applying a completely different method based on a general result (Theorem \ref{generalres}). We point out that the case in which  the  $\{R_n\}_{n \geq 1 } $ are not independent is still to be investigated. 
 
 \medskip
 \noindent
 It is important to highlight here that the novelty of this work concerns not only the convergence results themselves but also the proposed way of generating the involved constants.

\medskip
\noindent
In closing this introduction it is worth pointing out that the extensions of Theorem \ref{convergenzainlegge}  obtained in the present work are in the spirit of {\it  weighted exact laws}  as the ones proved in \cite{A2012}, \cite{AM2018} and more recently in \cite{CGH2019}, \cite{A2019} and \cite{A2020}.  We recall that an exact law is a convergence result for a sequence $\{Z_n\}_{n \geq 1}$  in which a  suitable array  of real numbers $\{c_{k,n}\}_{n \geq 1, \atop{k\leq n}}$ ensures that
\[
\sum_{k = 1}^{n}c_{k,n} Z_k \to 1
\]
in probability ({\it  weak exact law}) or a.s. ({\it strong exact law}).

\medskip
\noindent 
Throughout the paper, the following notation will be used:
\begin{itemize}
	\item[(a)] By $a_n \sim b_n$, $n \to \infty$, (resp. $f(t) \sim g(t)$, $t \to 0$) we mean that $\lim_{n \to \infty}\frac{a_n}{b_n}=1$ (resp.  $\lim_{t \to 0}\frac{f(t)}{g(t)}=1$).
	\item[(b)] By $a_n \approx b_n$, $n \to \infty$ (resp.$f(t) \approx g(t)$, $t \to 0$), we mean that $\lim_{n \to \infty} a_n-b_n =0$ (resp. $\lim_{t \to 0} f(t) -g(t)=0$).
	\item[(c)] The symbols $C$, $M$, $c$ that appear in various cases  may represent  different constants in each appearance.	
\end{itemize}

\medskip
\noindent
 The paper is organized as follows: Section 2 contains some preliminaries; in particular the set of conditions imposed on $\{F_n\}_{n \geq 1}$ is described and a number of convergence results related to this family of distributions are presented. Section 3 contains the announced result concerning Oppenheim expansions  which leads to a series of Corollaries that can be considered as extensions and generalizations of known results that appear in \cite{G2018}. Section 4 is split into two subsections: Subsection 4.1 is devoted to the general  asymptotic  result for independent random variables already mentioned above, while Subsection 4.2 describes two  of its  applications,  which are also of independent interest. In Section 5 we attempt a discussion about the origin of the constant that appears in Proposition \ref{integralconv} and Corollary \ref{Cor1}, while the Appendix collects the proofs of some Lemmas used throughout the paper, too technical to be positioned in the body of the text.

\section{Preliminaries } 

Let $\{F_n\}_{n \geq 1}$ be a sequence of distribution functions on $[0,1]$ such that $F_n(0)=0$ for every $n$.  We shall make the following basic assumptions:
		
		\begin{itemize}
			\item[(i)] There exists a sequence of positive real numbers $\{\alpha_n\}_{n \geq 1}$ with $$0<\liminf_{n\to \infty}\alpha_n \leq \limsup_{n\to \infty}\alpha_n < \infty\quad 
			\mbox{such that}\quad
			\lim_{t\to 0^+}\sup_n\left|\frac{F_n(t)}{t}-\alpha_n\right| = 0.$$

			\item[(ii)] The functions  $$u \mapsto \frac{1}{u}\left(\frac{F_n(u)}{u}-  \alpha_n\right)$$
			are uniformly integrable on $(0,1)$, i.e. 
			\begin{equation} \label{uniformintegrability}
			\lim_{t \to 0} \sup_n \int_0^t \frac{1}{u}\left|\frac{F_n(u)}{u}-  \alpha_n\right|{\rm d}u =0.
			\end{equation}
	
	\end{itemize}

\noindent \begin{remark} \rm With regards to condition (i), %\eqref{limit} 
 note that it is possible to have distributions for which $\displaystyle \limsup_{n\to \infty} \alpha_n = \infty$ or $=0$. For instance, consider the family of distributions given by
\[
F_n(t) = 
\begin{cases}
0, & t<0\\
\displaystyle\frac{c_nt}{1-c_n t}, & \displaystyle0\leq t<\frac{1}{2c_n}\\
1, & t\geq\displaystyle \frac{1}{2c_n}.
\end{cases}
\] 
If the sequence $\{c_n\}_{n \geq 1}$ is such that $\displaystyle\lim_{n\to \infty} c_n= \infty$, then  $\displaystyle \limsup_{n\to \infty} \alpha_n  = \limsup_{n\to \infty} \lim_{t\to 0^+}\frac{F_n(t) }{t} =\infty$. 
 
 \noindent
On the other hand, the family of distributions
\[
F_n(t) = 
\begin{cases}
0, & t<0\\
\displaystyle\frac{c_nt}{1-t}, & \displaystyle0\leq t<\frac{1}{1+c_n}\\
1, & t\geq\displaystyle\frac{1}{1+c_n}
\end{cases}
\] 
for which $\displaystyle\lim_{n\to \infty} c_n=  0$ leads to $\displaystyle \liminf_{n\to \infty} \alpha_n  = \liminf_{n\to \infty} \lim_{t\to 0^+}\frac{F_n(t) }{t} =0$.
\end{remark}
  
\medskip
\begin{remark} \rm Concerning condition (ii), it is easy to see that in the special case       of  $F_n$ being  differentiable i.e. $F_n^\prime =f_n$, then integration by parts leads to 
$$\int_0^1\frac{1}{u}\left(\frac{F_n(u)}{u}-  \alpha_n\right)\, {\rm d}u=\alpha_n -1+ \int_0^1\frac{f_n (u) - \alpha_n}{u}\, {\rm d}u.$$
\end{remark}

\bigskip
\bigskip
\noindent Throughout the paper we denote
$$b_{F_n}:= \int_0^1 \frac{1}{u}\left(\frac{F_n(u)}{u}-  \alpha_n\right)\, {\rm d}u \qquad \mbox{and} \qquad c_{F_n}= 1 -\alpha_n\gamma  +   b_{F_n},$$
where $\gamma$ is Euler's constant.  We also define 
\begin{equation}\label{formaalternativa}
A_n(t) = \int_{t}^{\infty}(\cos v-1){\rm d} F_n\left(\dfrac{t}{v}\right)\quad\mbox{and}\quad B_n(t) = \int_{t}^{\infty}(\sin v){\rm d} F_n\left(\dfrac{t}{v}\right).
\end{equation}
Note that in case $F_n$ has a density $f_n$, the above expressions can be written as
\begin{equation*} 
A_n(t) = t\int_{t}^{\infty}\dfrac{\cos v-1}{v^2}f_n\left(\dfrac{t}{v}\right) \, {\rm d}v\quad\mbox{and}\quad B_n(t) = t\int_{t}^{\infty}\dfrac{\sin v}{v^2}f_n\left(\dfrac{t}{v}\right) \, {\rm d}v.
\end{equation*}

\noindent Recall that a family of functions $\{h_n: \mathcal{I}\to\mathbb{R}, \, n\in\mathbb{N} \}$ is said to be bounded for $x\in \mathcal{I}$ uniformly in $n$, if there is a finite positive constant $M$ such that
\[
\sup_{n\in \mathbb{N}\atop x\in\mathcal{I}} |h_n(x)| = M.
\]
\begin{lemma} \sl 
	\label{bounds}Let $\{F_n\}_{ \geq 1}$ be a family of distribution functions such that condition (i) %\eqref{limit}
	hold. Then,
	\begin{enumerate}
		\item [(a)] The functions $t \mapsto \frac{F_n(t)}{t}$ are  bounded for $t \in (0, 1]$,  uniformly in $n$.
		\item [(b)] If moreover, condition (ii) %\eqref{uniformintegrability} 
		holds, the functions $t \mapsto\frac{A_n(t)}{t}$ are  bounded for $|t|<1 $,  uniformly in $n$.
	\end{enumerate}
\end{lemma}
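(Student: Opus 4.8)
The plan is to handle (a) by an elementary estimate and (b) by an integration by parts that displays $A_n(t)/t$ as a sum of three pieces, each bounded uniformly in $n$ by means of (i) and (ii). For (a), first note that $0<\liminf_n\alpha_n\le\limsup_n\alpha_n<\infty$ forces $\{\alpha_n\}$ to be a bounded sequence, say $\sup_n\alpha_n=:M_0<\infty$. By (i) there is $\delta\in(0,1)$ with $\sup_n|F_n(t)/t-\alpha_n|\le 1$ for $0<t\le\delta$, hence $F_n(t)/t\le 1+M_0$ on $(0,\delta]$; and $F_n(t)/t\le 1/t\le 1/\delta$ on $[\delta,1]$ because $F_n\le 1$. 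Thus $\sup_{n,\,0<t\le 1}F_n(t)/t\le\max(1+M_0,\,1/\delta)=:M<\infty$, which in particular records the bound $F_n(u)\le Mu$ on $(0,1]$, uniform in $n$, to be used below.

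For (b), since $A_n(-t)=A_n(t)$ it suffices to bound $A_n(t)/t$ for $t\in(0,1)$. Recall that $A_n(t)=\int_{(0,1]}(\cos(t/u)-1)\,{\rm d}F_n(u)$, i.e.\ the first integral in \eqref{formaalternativa} read through the change of variables $v=t/u$. Integrating by parts on $(\epsilon,1]$ with $g(u)=\cos(t/u)-1$, $g'(u)=t\,u^{-2}\sin(t/u)$ gives
\[
\int_{(\epsilon,1]}(\cos(t/u)-1)\,{\rm d}F_n(u)=(\cos t-1)F_n(1)-(\cos(t/\epsilon)-1)F_n(\epsilon)-t\int_\epsilon^1\frac{F_n(u)}{u}\cdot\frac{\sin(t/u)}{u}\,{\rm d}u .
\]
Letting $\epsilon\to0^+$, the left side tends to $A_n(t)$ by dominated convergence ($|\cos(t/u)-1|\le 2$), while the boundary term at $\epsilon$ vanishes since $|\cos(t/\epsilon)-1|\le 2$ and $0\le F_n(\epsilon)\le M\epsilon\to 0$ by (a), so
\[
\frac{A_n(t)}{t}=\frac{(\cos t-1)F_n(1)}{t}-\lim_{\epsilon\to0^+}\int_\epsilon^1\frac{F_n(u)}{u}\cdot\frac{\sin(t/u)}{u}\,{\rm d}u .
\]
Now write $\tfrac{F_n(u)}{u}=\alpha_n+\bigl(\tfrac{F_n(u)}{u}-\alpha_n\bigr)$. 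The remainder integrand is dominated, uniformly in $\epsilon$, by $u^{-1}\bigl|\tfrac{F_n(u)}{u}-\alpha_n\bigr|$, which is integrable on $(0,1)$ by (ii); hence the corresponding limit exists and is $\le I_n:=\int_0^1 u^{-1}\bigl|\tfrac{F_n(u)}{u}-\alpha_n\bigr|\,{\rm d}u$ in modulus, and splitting at the $\delta$ furnished by (ii) and using (a) and $\sup_n\alpha_n<\infty$ on $[\delta,1]$ gives $\sup_n I_n<\infty$. For the $\alpha_n$‑piece, $v=t/u$ turns $\alpha_n\int_\epsilon^1 u^{-1}\sin(t/u)\,{\rm d}u$ into $\alpha_n\int_t^{t/\epsilon}\tfrac{\sin v}{v}\,{\rm d}v$, which tends to $\alpha_n\int_t^\infty\tfrac{\sin v}{v}\,{\rm d}v$, and $t\mapsto\int_t^\infty\tfrac{\sin v}{v}\,{\rm d}v$ is continuous on $[0,1]$, hence bounded there. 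Combining the three uniform bounds with $|\cos t-1|/t\le t/2\le\tfrac12$ and $F_n(1)\le 1$ yields a single finite constant, independent of $n$, bounding $|A_n(t)/t|$ for all $0<|t|<1$; at $t=0$ the quotient is understood as $\lim_{t\to0}A_n(t)/t=-\tfrac{\pi}{2}\alpha_n$, again uniformly bounded.

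The step requiring care is the improper, only conditionally convergent, integral $\int_0^1 u^{-1}\sin(t/u)\,{\rm d}u$: the term $\alpha_n\int_\epsilon^1 u^{-1}\sin(t/u)\,{\rm d}u$ cannot be absorbed into an absolutely convergent integral, so the truncation at $\epsilon$ must be carried along until the substitution $v=t/u$ converts it into $\alpha_n\int_t^{t/\epsilon}\tfrac{\sin v}{v}\,{\rm d}v$, at which point one invokes convergence of the sine integral $\int_t^\infty\tfrac{\sin v}{v}\,{\rm d}v$. Separating this non–absolutely–integrable leading part from the remainder, which (ii) renders absolutely integrable and uniformly controlled, is the crux of the argument; everything else is routine estimation.
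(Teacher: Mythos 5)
Your proof is correct and follows essentially the same route as the paper's: the identical two-region estimate for (a), and for (b) the same integration by parts followed by the splitting $\frac{F_n(u)}{u}=\alpha_n+\bigl(\frac{F_n(u)}{u}-\alpha_n\bigr)$, with the remainder controlled by (ii) together with (a) and the $\alpha_n$-piece reduced to the bounded sine integral $\int_t^\infty \frac{\sin v}{v}\,{\rm d}v$. Your treatment is in fact slightly more careful than the paper's on two points the paper glosses over — carrying the $\epsilon$-truncation through the conditionally convergent integral, and noting the evenness of $A_n$ to reduce to $t>0$ — but these are refinements of the same argument, not a different one.
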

\begin{proof}
For part (a), first observe that by the assumptions, we have that $t \mapsto \frac{F_n(t)}{t}$ are bounded uniformly in a neighborhood of $0$, i.e. there exists $c \in (0, 1]$ such that 
	$$\sup_{n\in \mathbb{N}\atop 0<t<c} \frac{F_n(t)}{t} =:M < \infty.$$
Now, for $t \in [c,1]$ we have $$\frac{F_n(t)}{t} \leq \frac{1}{c},$$
thus,
$$\sup_{n\in \mathbb{N}\atop 0<t\leq 1}\frac{F_n(t)}{t} \leq \max \left\{M, \frac{1}{c}\right\}.$$

\noindent For part (b), it suffices to treat the case $t>0;$ by applying integration by parts and using part (a) we have 
\begin{align*}
&
\left|A_n(t)\right|=\left|\int_{0}^{1}\left(\cos\left(\dfrac{t}{u}\right)-1\right){\rm d} F_n(u)\right|\leq \int_{0}^{1}\left(1-\cos\left(\dfrac{t}{u}\right)\right){\rm d} F_n(u)
\\
& =F_n(u) \left(1-\cos \left(\dfrac{t}{u}\right)  \right)\Big|_0^1 +t \int_0^1 \frac{F_n(u)}{u^2} \sin\left(\dfrac{t}{u}\right) \, {\rm d} u\\
&= (1-\cos t)- \lim_{u \to 0}\frac{F_n(u)}{u} \left(1- \cos \left(\dfrac{t}{u}\right)  \right)u+t \int_0^1 \frac{F_n(u)}{u^2} \sin\left(\dfrac{t}{u}\right) \, {\rm d} u\\
& =(1-\cos t)+t \int_0^1 \frac{F_n(u)}{u^2} \sin\left(\dfrac{t}{u}\right) \, {\rm d} u.
\end{align*} 
Note that the integral term can be equivalently written as
\begin{align*}
\int_0^1 \frac{F_n(u)}{u^2} \sin\left(\dfrac{t}{u}\right) \, {\rm d} u =\int_0^1 \frac{1}{u}\left(\frac{F_n(u)}{u}-\alpha_n \right)\sin\left(\dfrac{t}{u}\right) \, {\rm d} u +\int_0^1 \frac{\alpha_n}{u } \sin\left(\dfrac{t}{u}\right) \, {\rm d} u.
\end{align*}
For the first integral we have that 
$$\left|\int_0^1 \frac{1}{u}\left(\frac{F_n(u)}{u}-\alpha_n \right)\sin\left(\dfrac{t}{u}\right) \, {\rm d} u\right|\leq \int_0^1 \frac{1}{u}\left|\frac{F_n(u)}{u}-\alpha_n \right|  \, {\rm d} u \leq \sup_n \int_0^1 \frac{1}{u}\left|\frac{F_n(u)}{u}-\alpha_n \right|  \, {\rm d} u < \infty,$$
by %\eqref{uniformintegrability}
(a).
For the second integral, by using the substitution $v = \frac{t}{u}$ we obtain
$$\int_0^1 \frac{\alpha_n}{u } \sin\left(\dfrac{t}{u}\right) \, {\rm d} u = \alpha_n \int_t^\infty\frac{v}{t} (\sin v) \frac{t}{v^2}\, {\rm d} v = \alpha_n \int_t^\infty\frac{\sin v}{v}\, {\rm d} v \leq \sup_n \alpha_n \int_0^\infty\frac{\sin v}{v}\, {\rm d} v= C \frac{\pi}{2}$$
which is true whenever $|t|<\pi$. Thus,
$$\left|A_n(t)\right|\leq (1-\cos t)+C |t|< \frac{t^2}{2}+C  |t|< C |t|, \quad\mbox{for}\quad |t| < 1 .$$
\end{proof}

\medskip
\noindent
The next two results state the uniform convergence of a particular sequence of functions of $t$  written in terms $F_n$, as $t \to 0$.
\begin{proposition}\sl
	\label{integralconv}Let $\{F_n\}_{n \geq 1}$   be  a family of distribution functions for which conditions   (i) 
	%\eqref{limit}, \eqref{Fcon1} 
	  and %\eqref{uniformintegrability} 
 (ii) are satisfied. Then for any integer $n$, 
	$$\lim_{t \to 0 } \left(\frac{1}{t}  \int_0^1 \left(\sin \frac{t}{u}\right){\rm d}F_n(u)  + \alpha_n \log|t|\right) =c_{F_{n}}. $$	
Furthermore, the above convergence is uniform in $n$. 	
\end{proposition}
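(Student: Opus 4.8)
The plan is to reduce first to $t>0$: both $\frac1t\int_0^1\sin(t/u)\,{\rm d}F_n(u)$ and $\log|t|$ are even in $t$, so nothing is lost. Next I would integrate by parts in the Stieltjes integral exactly as in the proof of Lemma~\ref{bounds}(b), using $\frac{{\rm d}}{{\rm d}u}\sin(t/u)=-\frac{t}{u^2}\cos(t/u)$ and the fact that $\lim_{u\to0}F_n(u)\sin(t/u)=0$ (which follows from Lemma~\ref{bounds}(a), since $F_n(u)=O(u)$ uniformly in $n$, while $\sin(t/u)$ stays bounded); the boundary term at $u=1$ produces $\sin t$. This gives
$$\frac1t\int_0^1\sin\frac tu\,{\rm d}F_n(u)=\frac{\sin t}{t}+\int_0^1\frac{F_n(u)}{u^2}\cos\frac tu\,{\rm d}u,$$
so, since $\frac{\sin t}{t}\to1$, the proposition reduces to showing that $\int_0^1\frac{F_n(u)}{u^2}\cos\frac tu\,{\rm d}u+\alpha_n\log t\to-\alpha_n\gamma+b_{F_n}$ uniformly in $n$ as $t\to0^+$.

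For this I would split $\frac{F_n(u)}{u^2}=\frac1u\bigl(\frac{F_n(u)}{u}-\alpha_n\bigr)+\frac{\alpha_n}{u}$ and treat the two parts separately. On the $\frac{\alpha_n}{u}$ part, the substitution $v=t/u$ turns $\int_0^1\frac{\cos(t/u)}{u}\,{\rm d}u$ into the cosine integral $\int_t^\infty\frac{\cos v}{v}\,{\rm d}v=-\gamma-\log t-\int_0^t\frac{\cos v-1}{v}\,{\rm d}v$; multiplying by $\alpha_n$ and adding back $\alpha_n\log t$ leaves $-\alpha_n\gamma-\alpha_n\int_0^t\frac{\cos v-1}{v}\,{\rm d}v$, and since $\frac{\cos v-1}{v}$ is bounded near $0$ and $\{\alpha_n\}$ is bounded by hypothesis (i), this converges to $-\alpha_n\gamma$ uniformly in $n$.

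The heart of the matter is the remaining part: writing $g_n(u)=\frac1u\bigl(\frac{F_n(u)}{u}-\alpha_n\bigr)$, I must show $\int_0^1 g_n(u)\bigl(\cos\frac tu-1\bigr)\,{\rm d}u\to0$ uniformly in $n$, so that $\int_0^1 g_n(u)\cos\frac tu\,{\rm d}u\to\int_0^1 g_n(u)\,{\rm d}u=b_{F_n}$. I would first record that $K:=\sup_n\int_0^1|g_n(u)|\,{\rm d}u<\infty$: the integral over a small interval $(0,\delta_0)$ is bounded uniformly by condition (ii), and over $[\delta_0,1]$ it is bounded by Lemma~\ref{bounds}(a) together with $\sup_n\alpha_n<\infty$. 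Then, given $\varepsilon>0$, condition (ii) lets me pick $\delta>0$ with $2\sup_n\int_0^\delta|g_n|<\varepsilon/2$, which controls the near-$0$ contribution via $|\cos(t/u)-1|\le2$; on $[\delta,1]$ I use $|\cos(t/u)-1|\le\frac{t^2}{2u^2}\le\frac{t^2}{2\delta^2}$ to bound that contribution by $\frac{t^2K}{2\delta^2}<\varepsilon/2$ once $t$ is small — all estimates independent of $n$. Assembling the three limits gives $1+b_{F_n}-\alpha_n\gamma=c_{F_n}$. The main obstacle is precisely this last uniform-in-$n$ vanishing: near $u=0$ the oscillation of $\cos(t/u)$ and the possible unboundedness of $g_n$ work against each other, and it is exactly the uniform integrability hypothesis (ii) that tames it; everything else is either a classical special-function identity or a direct consequence of Lemma~\ref{bounds} and the boundedness of $\{\alpha_n\}$.
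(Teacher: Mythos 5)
Your argument is correct, and it reaches $c_{F_n}=1-\alpha_n\gamma+b_{F_n}$ by a genuinely different decomposition than the paper's. The paper splits the \emph{domain} at $u=t$, writing the expression as $\Gamma_n(t)+\Delta_n(t)$ with $\Gamma_n$ over $(0,t)$ and $\Delta_n$ over $(t,1)$, integrates by parts in each piece separately, and ends up with the constant in the form $1-\alpha_n+\alpha_n\bigl(\int_1^\infty\frac{\sin v}{v^2}\,{\rm d}v+\int_0^1\frac{\sin v-v}{v^2}\,{\rm d}v\bigr)+b_{F_n}$, so that a separate appendix computation (Lemma \ref{lemma1}, itself resting on the expansion of ${\rm Ci}$ and a table integral) is needed to recognize the bracket as $1-\gamma$. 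You instead integrate by parts once over all of $(0,1]$ to get $\frac{\sin t}{t}+\int_0^1\frac{F_n(u)}{u^2}\cos\frac{t}{u}\,{\rm d}u$ and then split the \emph{integrand} as $\frac{1}{u}\bigl(\frac{F_n(u)}{u}-\alpha_n\bigr)+\frac{\alpha_n}{u}$: the singular explicit part yields $-\alpha_n\gamma$ directly from ${\rm Ci}(t)=\gamma+\log t+\int_0^t\frac{\cos v-1}{v}\,{\rm d}v$, and the remainder converges to $b_{F_n}$ by exactly the uniform-integrability argument you give (condition (ii) near $0$, the crude bound $|\cos(t/u)-1|\le t^2/(2\delta^2)$ away from $0$). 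Your route is shorter, makes the provenance of both $\gamma$ and $b_{F_n}$ transparent, and renders Lemma \ref{lemma1} unnecessary; what it shares with the paper is the essential mechanism, namely that uniform integrability of $u\mapsto\frac{1}{u}\bigl(\frac{F_n(u)}{u}-\alpha_n\bigr)$ is what makes every limit uniform in $n$. The only points worth stating explicitly in a final write-up are (a) that the integration by parts on $(\epsilon,1]$ passes to the limit $\epsilon\to0$ because $|\int_0^\epsilon\sin(t/u)\,{\rm d}F_n(u)|\le F_n(\epsilon)\to0$ and the boundary term $F_n(\epsilon)\sin(t/\epsilon)/t$ vanishes by Lemma \ref{bounds}(a), and (b) that the split of $\int_0^1\frac{F_n(u)}{u^2}\cos\frac{t}{u}\,{\rm d}u$ into two pieces is legitimate because the $g_n$ piece converges absolutely and the $\alpha_n/u$ piece converges as an improper (cosine) integral; both are routine and implicit in what you wrote.
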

\begin{proof}
It suffices to calculate the requested limit as $t \to 0^+$.
	For every integer $n$ we have, 
	\begin{align}&
	\nonumber \frac{1}{t}  \int_0^1 \left(\sin \frac{t}{u}\right){\rm d}F_n(u)  + \alpha_n\log t = \frac{1}{t}  \int_0^t \left(\sin \frac{t}{u}\right){\rm d}F_n(u)  +\frac{1}{t}  \int_t^1 \left(\sin \frac{t}{u}\right){\rm d}F_n(u)  + \alpha_n \log t\\
	& \label{eq1}=\underbrace{ \frac{1}{t}  \int_0^t \left(\sin \frac{t}{u}\right){\rm d}F_n(u) }_{=\Gamma_n(t)} +\underbrace{\left(\frac{1}{t}  \int_t^1 \left(\sin \frac{t}{u}\right){\rm d}F_n(u)- \int_t^1 \frac{\alpha_n}{u}\, {\rm d}u \right)}_{= \Delta_n(t)}.
	\end{align}
	We start with studying the term $ \Delta_n(t)$.
	\begin{align}&
	\nonumber  \Delta_n(t)= \frac{1}{t}  \int_t^1 \left(\sin \frac{t}{u}\right){\rm d}F_n(u)- \int_t^1 \frac{\alpha_n}{u}\, {\rm d}u \\
	&\nonumber= \frac{F_n(u) \sin  \frac{t}{u} }{t}\Big|_t^1 - \frac{1}{t} \int_t^1  F_n(u)\left( \cos \frac{t}{u}\right)\left(- \frac{t}{u^2}\right)\, {\rm d}u- \int_t^1 \frac{\alpha_n}{u}\, {\rm d}u \\
	&\nonumber= F_n(1) \frac{\sin t}{t}- \frac{F_n(t)}{t} \sin 1+  \int_t^1 \frac{F_n(u)}{u^2}\left( \cos \frac{t}{u}\right)\, {\rm d}u - \int_t^1 \frac{\alpha_n}{u}\, {\rm d}u \\
	&\label{eq2}= \underbrace{ \frac{\sin t}{t}- \frac{F_n(t)}{t} \sin 1+ \int_t^1\left(\frac{F_n(u)}{u^2} -  \frac{\alpha_n}{u} \right)\, {\rm d}u}_{=C_n(t)} +   \underbrace{\int_t^1\frac{F_n(u)}{u^2} \left(\cos \frac{t}{u} -1\right)\, {\rm d}u}_{=D_n(t)} .
	\end{align}
	For $C_n(t)$ we have that 
	\begin{equation}\label{equazione11}
	\lim_{t \to 0^+} C_n(t)= 1 -\alpha_n \sin 1 + \int_0^1 \frac{1}{u}\left(\frac{F_n(u)}{u}-  \alpha_n\right)\, {\rm d}u.
	\end{equation}
	Note that condition (i) %\eqref{limit} 
	 %\eqref{Fcon1} 
	ensures that the above convergence is uniform in $n$.
	For the term $D_n(t)$ we first write 
	\begin{align*}&
	D_n(t)= \int_t^1\frac{F_n(u)}{u^2} \left( \cos \frac{t}{u} -1\right)\, {\rm d}u= \int_1^t \frac{F_n(\frac{t}{v})}{\frac{t^2}{v^2}} (\cos v-1) \left(-\frac{t}{v^2}\right) \, {\rm d}v= \int_t^1 \frac{F_n(\frac{t}{v})}{\frac{t}{v}} \cdot \frac{\cos v-1}{v}\, {\rm d}v.
\end{align*}	
 Then
$$\lim_{t \to 0^+} D_n(t) = \alpha_n \int_0^1 \frac{\cos v -1}{v}\, {\rm d}v$$
uniformly in $n$, where the convergence  follows by applying the bounded convergence theorem due to Lemma \ref{bounds} (a) and the fact that $ f(v) = \frac{\cos v -1}{v}$ is a bounded function. Using integration by parts, we can re-write the above integral as 
\begin{align*}&
\int_0^1 \frac{\cos v -1}{v}\, {\rm d}v =\frac{\sin v-v}{v}\Big|_0^1 + \int_0^1 \frac{\sin v-v}{v^2}\, {\rm d}v \\& =
(\sin 1 - 1)- \underbrace{\lim_{v \to 0}\frac{\sin v-v}{v}}_{=0} + \int_0^1 \frac{\sin v-v}{v^2}\, {\rm d}v= (\sin 1 - 1)+ \int_0^1 \frac{\sin v-v}{v^2}\, {\rm d}v.
\end{align*}
and hence
\begin{equation}
 \label{eq3}\lim_{t \to 0} D_n(t)= \alpha_n(\sin 1 - 1)+ \alpha_n \int_0^1 \frac{\sin v-v}{v^2}\, {\rm d}v. 
\end{equation}
 Equations \eqref{eq2}-\eqref{eq3} lead to  
$$
\lim_{t \to 0^+}  \Delta_n(t)=1 -\alpha_n + \alpha_n \int_0^1 \frac{\sin v-v}{v^2}\, {\rm d}v+   \int_0^1 \frac{1}{u}\left(\frac{F_n(u)}{u}-  \alpha_n\right)\, {\rm d}u,\quad \mbox{uniformly in}\quad n .
$$
Next, we study the term $ \Gamma_n(t)$.  Using integration by parts we have  
\begin{eqnarray}
\nonumber  \Gamma_n(t) &=&  \frac{1}{t}  \int_0^t \left(\sin \frac{t}{u}\right){\rm d}F_n(u) = \frac{F_n(u) \sin \frac{t}{u}}{t}\Big|_0^t-  \int_0^t \frac{F_n(u)}{t}  \left(\cos \frac{t}{u}\right)\left(- \frac{t}{u^2}\right)\, {\rm d}u\\
\nonumber &=& \frac{F_n(t)}{t}\sin 1 -\lim_{u \to 0} \frac{F_n(u) \sin \frac{t}{u}}{t} + \int_0^t \frac{F_n(u)}{u^2} \cos \frac{t}{u} \, {\rm d}u \\
\nonumber&=&
\frac{F_n(t)}{t}\sin 1 -\underbrace{\lim_{u \to 0}\frac{F_n(u)}{u} \cdot \frac{ \sin \frac{t}{u}}{\frac{t}{u}}}_{=0} + \int_0^t \frac{F_n(u)}{u^2} \cos \frac{t}{u} \, {\rm d}u \\
\label{eq4}&=&\underbrace{\frac{F_n(t)}{t}\sin 1 }_{=E_n(t)}+ \underbrace{\int_0^t \frac{F_n(u)}{u^2} \cos \frac{t}{u} \, {\rm d}u}_{={H_n}(t)} .
\end{eqnarray}
For the first term we have
$$\lim_{t \to 0^+} E_n(t) = \alpha_n \sin 1$$
uniformly in $n$ due to condition (i)%\eqref{limit}
. %\eqref{Fcon1}
For the second term we start by defining 
$$\varepsilon_n(u) : = \frac{F_n(u)}{u}- \alpha_n.$$
Then 
\begin{align*}
H_n(t) = \int_0^t \frac{F_n(u)}{u^2} \cos \frac{t}{u} \, {\rm d}u = \int_0^t \frac{1}{u} (\alpha_n + \varepsilon_n(u))\cos \frac{t}{u} \, {\rm d}u=\alpha_n \underbrace{
	\int_0^t \frac{1}{u}  \cos \frac{t}{u} \, {\rm d}u}_{=V_n(t)}+ \underbrace{\int_0^t \frac{\varepsilon_n(u)}{u}  \cos \frac{t}{u} \, {\rm d}u}_{=W_n(t)}.
\end{align*}	
Concerning the second integral, we have
$$|W_n(t)|\leq  \int_0^t \frac{|\varepsilon_n(u)|}{u}  \, {\rm d}u\to 0, \qquad t \to 0,$$
 uniformly in $n$ since by formula \eqref{uniformintegrability}, the functions $u \mapsto \frac{\varepsilon_n(u)}{u}  = \frac{1}{u}\left(\frac{F_n(u)}{u} - \alpha_n \right) $  are uniformly integrable in the neighborhood of 0.
By the change of variable $v = \frac{t}{u}$  and integration by parts, the first integral becomes
\begin{align*}&
V_n(t)=\int_ \infty^1  \frac{v}{t} \cos v \left(-\frac{t}{v^2}\right) \, {\rm d}v = \int_1^\infty\frac{\cos v}{v} \, {\rm d}v = \frac{\sin v}{v}\Big|_1^\infty + \int_1^\infty \frac{\sin v}{v^2} \, {\rm d}v \\ & = - \sin 1 + \int_1^\infty \frac{\sin v}{v^2}\, {\rm d}v;
\end{align*}
thus
 \[
\lim_{t \to 0} H_n(t) = \alpha_n\left(- \sin 1 + \int_1^\infty \frac{\sin v}{v^2}\, {\rm d}v\right)\quad\mbox{uniformly in}\quad n,
\]  
 which leads to the uniform convergence in $n$ of $ \Gamma_n(t)$ i.e. 
$$\lim_{t \to 0}  \Gamma_n(t)=\alpha_n \sin 1 -\alpha_n \sin 1 + \alpha_n \int_1^\infty \frac{\sin v}{v^2}\, {\rm d}v=\alpha_n \int_1^\infty \frac{\sin v}{v^2}\, {\rm d}v \quad \mbox{uniformly in $n$}.$$
 Finally,
$$\lim_{t  \to0}( \Gamma_n(t) +  \Delta_n(t)) =  1 -\alpha_n   + \alpha_n \left(\int_1^\infty \frac{\sin v}{v^2}\, {\rm d}v+   \int_0^1 \frac{\sin v-v}{v^2}\, {\rm d}v\right)+   \int_0^1 \frac{1}{u}\left(\frac{F_n(u)}{u}-  \alpha_n\right)\, {\rm d}u.$$
and the desired result follows by applying Lemma \ref{lemma1}.	
\end{proof}

\medskip 

The next corollary is a direct consequence of the result described in Proposition \ref{integralconv} and it generalizes Lemma 3.2 in \cite{G2018}. 

\begin{corollary} \sl \label{general1}  Let $\{F_n\}_{n \geq 1}$ be a family of distribution functions for which conditions (i) and (ii) are satisfied.
  Then we have
		$$\frac{1}{t}  \int_0^1 \left(\sin \frac{t}{u}\right){\rm d}F_n(u) \sim -\alpha_n \log |t|, \qquad t \to 0.$$
		Moreover, the above relation is uniform in $n$.		
\end{corollary}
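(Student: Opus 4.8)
The plan is to obtain the corollary as an essentially mechanical consequence of Proposition~\ref{integralconv}, once one knows that the limit constants are bounded uniformly in $n$. Abbreviate $I_n(t):=\frac1t\int_0^1\bigl(\sin\frac tu\bigr)\,{\rm d}F_n(u)$; since each $\alpha_n>0$, the quantity $-\alpha_n\log|t|$ is nonzero for $0<|t|<1$ and tends to $+\infty$ as $t\to0$, so the ratio in the statement is well defined near $0$ and it suffices to show that $\bigl|I_n(t)\big/(-\alpha_n\log|t|)-1\bigr|\to0$ as $t\to0$, uniformly in $n$. First I would record the uniform bounds I need on $\{F_n\}$. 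By condition (i) one has $a:=\inf_n\alpha_n>0$ and $\sup_n\alpha_n<\infty$. To see that $M:=\sup_n|b_{F_n}|<\infty$, I would split $b_{F_n}=\int_0^\tau+\int_\tau^1$ of $\frac1u\bigl(\frac{F_n(u)}u-\alpha_n\bigr)$ for a fixed small $\tau\in(0,1)$: the first piece is $\le\sup_n\int_0^\tau\frac1u\bigl|\frac{F_n(u)}u-\alpha_n\bigr|\,{\rm d}u$, which is finite by \eqref{uniformintegrability}, and on $[\tau,1]$ the integrand is dominated by $\frac1\tau\bigl(\sup_{n,\,u\in(0,1]}\frac{F_n(u)}u+\sup_n\alpha_n\bigr)$, which is finite by Lemma~\ref{bounds}(a). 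Hence $K:=\sup_n|c_{F_n}|\le1+\gamma\sup_n\alpha_n+M<\infty$.

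\noindent With these in hand, I would apply the uniform convergence of Proposition~\ref{integralconv} with tolerance $1$: there is $\delta\in(0,1)$ such that $\bigl|I_n(t)+\alpha_n\log|t|-c_{F_n}\bigr|<1$ for all $n$ and all $0<|t|<\delta$, whence $\bigl|I_n(t)+\alpha_n\log|t|\bigr|<K+1$ for all such $n$ and $t$. Then for $0<|t|<\delta$,
\[
\frac{I_n(t)}{-\alpha_n\log|t|}=1-\frac{I_n(t)+\alpha_n\log|t|}{\alpha_n\log|t|},
\]
so that $\bigl|I_n(t)\big/(-\alpha_n\log|t|)-1\bigr|\le(K+1)\big/\bigl(a\,|\log|t||\bigr)$, and the right-hand side tends to $0$ as $t\to0$ independently of $n$. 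This is precisely the asserted equivalence $I_n(t)\sim-\alpha_n\log|t|$ as $t\to0$, uniformly in $n$.

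\noindent The whole argument is routine; the only step requiring a little care is the uniform bound $\sup_n|c_{F_n}|<\infty$, which one extracts from condition (ii) near $0$ and from Lemma~\ref{bounds}(a) on $[\tau,1]$. Everything else is just the splitting $I_n(t)=-\alpha_n\log|t|+(I_n(t)+\alpha_n\log|t|)$ combined with $\bigl|\log|t|\bigr|\to\infty$, the first summand carrying the logarithmic growth and the second staying bounded uniformly in $n$.
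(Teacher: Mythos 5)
Your proof is correct and follows essentially the same route as the paper: split off $-\alpha_n\log|t|$, control the remainder via the uniform convergence in Proposition~\ref{integralconv}, and use $\inf_n\alpha_n>0$ together with $|\log|t||\to\infty$. In fact you are slightly more careful than the paper's own argument, which needs $\sup_n|c_{F_n}|<\infty$ but only gestures at assumption~(i), whereas you justify $\sup_n|b_{F_n}|<\infty$ explicitly by splitting the integral at a fixed $\tau$ and invoking \eqref{uniformintegrability} and Lemma~\ref{bounds}(a).
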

\begin{proof}Again, we calculate the limit as $t \to 0^+$ only.
	Observe that 
		\begin{align*}
		\frac{1}{-t\alpha_n  \log t}  \int_0^1 \left(\sin \frac{t}{u}\right){\rm d}F_n(u)=\frac{1}{-\alpha_n  \log t} \left(\frac{1}{t}  \int_0^1 \left(\sin \frac{t}{u}\right){\rm d}F_n(u)+ \alpha_n  \log t- c_{F_n}\right)- \frac{c_{F_n}}{\alpha_n  \log t}+1.
		\end{align*}
		The second fraction goes to 0 as $t \to 0^+$ evidently, and Proposition \ref{integralconv} implies the same for the first fraction.  Moreover, in both terms  the convergence is  uniform in $n$: in fact, recalling assumption (i)%\eqref{limit}
, we notice that the sequence $\alpha_n$ is bounded from below by a constant $C>0$, hence for the first fraction we have
		$$\left|\frac{ \frac{1}{t}  \int_0^1 \left(\sin \frac{t}{u}\right){\rm d}F_n(u)+ \alpha_n  \log t- c_{F_n} }{   \alpha_n  \log t }\right|\leq 
		\frac{\left|\frac{1}{t}  \int_0^1 \left(\sin \frac{t}{u}\right){\rm d}F_n(u)+ \alpha_n  \log t- c_{F_n}\right|}{  C | \log t|}.$$
		 By  Proposition \ref{integralconv},  the term on the right goes to 0 uniformly in $n$. 
		Similar arguments can be applied to prove that the convergence of the second term is also uniform in $n$. More precisely, 
		$$\left|\frac{c_{F_n}}{\alpha_n \log t}\right|=  \left|\frac{1}{\alpha_n \log t} - \frac{\gamma}{\log t}+  \frac{b_{F_n}}{\alpha_n \log t} \right| \leq \frac{1}{C |\log t|} + \frac{\gamma}{|\log t|} + \frac{|b_{F_n}|}{C |\log t|},$$
		 and the uniform convergence follows by assumption (i).
\end{proof}

\section{An exact weak law for Oppenheim expansions }

 Let $\{ \Theta_n\}_{n\geq  1}$ be a sequence of integer valued random variables defined on $(\Omega, \mathcal{A}, P)$, where $\Omega =[0,1]$, $\mathcal{A}$ is the $\sigma$-algebra of the Borel subsets of $[0,1]$ and $P$ is the Lebesgue measure on $[0,1]$. Let $\{F_n, n\geq 1\}$ be a sequence of probability distribution functions defined on $[0,1]$ with  $F_n(0)=0$, $\forall n$ and moreover let $\varphi_n:\mathbb{N}^*\to \mathbb{R}^+$ be a sequence of function. Furthermore, let
 $\{q_n\}_{n\geq  1}$ with $q_n=q_n(h_1, \dots, h_n)$ be a sequence of nonnegative numbers (i.e. possibly depending on the $n$ integers $h_1, \dots, h_n$) such that, for $h_1 \geq  1$ and $h_j\geq  \varphi_{j-1}(h_{j-1})$, $j=2, \dots, n$ we have
\begin{equation*}\label{densitacondizionale}
P\big( \Theta_{n+1}=h_{n+1}| \Theta_{n}=h_{n}, \dots,  \Theta_{1}=h_{1}\big)= F_n(\beta_n)-F_n(\alpha_n),
\end{equation*}
where  
\begin{equation*}
\alpha_n=\delta_n(h_n, h_{n+1}+1, q_n)  ,\quad \beta_n=\delta_n(h_n, h_{n+1}, q_n)\quad\mbox{with}\quad\delta_j(h,k,q) = \frac{ \varphi_j (h )(1+q )}{k+\varphi_j (h ) q }.
\end{equation*} 
Let $Q_n= q_n ( \Theta_1, \dots, \Theta_n)$ and define
\begin{equation}
\label{Rdef}R_{n}= \frac{  \Theta_{n+1}+\varphi_n(\Theta_n) Q_n}{\varphi_n( \Theta_n)(1+Q_n) }= \frac{1}{\delta_n( \Theta_n, \Theta_{n+1}, Q_n)}.
\end{equation}
Particular instances of this scheme are studied in \cite{L1883}, \cite{G1976} (L\"uroth series), \cite{S1974}, \cite{E1913} (Engel series), \cite{P1960} (Sylvester series), \cite{HKS2002}  (Engel continued fraction expansions).

\medskip

We define for every integer $n$, a random variable $U_n$   with distribution function $F_n$.   We assume that the variables $\{U_n\}_{n\geq  1}$ are  independent.  Furthermore, let $\displaystyle Y_n = \frac{1}{U_n}$ for every integer $n$. The characteristic function of $Y_n$ is given by
$$\psi_n(t) = \int_0^1 {\rm e} ^{{\rm i}\frac{t}{u}}  {\rm d} F_n(u),$$
which can be equivalently written as 
$$\psi_n(t) = 1 + A_n(t) + {\rm i}B_n(t),$$
where
\begin{equation}
\label{defAB}A_n(t) = \int_{0}^{1}\left(\cos\left(\dfrac{t}{u}\right)-1\right){\rm d} F_n(u)\quad\mbox{and}\quad B_n(t) = \int_{0}^{1}\sin\left(\dfrac{t}{u}\right){\rm d} F_n(u).
\end{equation}
If $v= t/u$, the quantities $A_n(t)$ and $B_n(t)$ can be written as the expressions in \eqref{formaalternativa}.

\medskip

\noindent The result that follows is  instrumental  for Theorem \ref{1}  and  its proof can be found in \cite{G2018}.

\begin{lemma} \label{linearizzazione}\sl Let $c \in(0,1)$ be fixed. There exists a constant $ m(c)$ such that, for $|z|< c$, we have
	$$\big|\log(1+z)-z\big|\leqslant m (c)|z|^2.$$	
\end{lemma}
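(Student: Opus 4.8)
The plan is to use the power-series expansion of the principal branch of the (complex) logarithm. For $|z|<1$ one has the absolutely convergent expansion
$$\log(1+z) = \sum_{k=1}^{\infty} \frac{(-1)^{k-1}}{k}\, z^k,$$
so that
$$\log(1+z) - z = \sum_{k=2}^{\infty} \frac{(-1)^{k-1}}{k}\, z^k.$$
First I would factor $z^2$ out of this tail and estimate what remains by a geometric series: for $|z|<c<1$,
$$\bigl|\log(1+z)-z\bigr| \leq \sum_{k=2}^{\infty} \frac{|z|^k}{k} = |z|^2 \sum_{k=2}^{\infty} \frac{|z|^{k-2}}{k} \leq |z|^2 \sum_{k=2}^{\infty} \frac{c^{k-2}}{k} \leq \frac{|z|^2}{2}\sum_{j=0}^{\infty} c^{j} = \frac{|z|^2}{2(1-c)},$$
where in the last line we used $1/k\le 1/2$ for $k\ge 2$ and summed the geometric series. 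This yields the claim with the explicit constant $m(c)=\dfrac{1}{2(1-c)}$.

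As a cross-check one can argue via the integral representation $\log(1+z)-z = -\int_{[0,z]} \frac{w}{1+w}\,\mathrm{d}w$, the integral being taken along the straight segment from $0$ to $z$; since $|w|\le |z|<c$ there, we have $|1+w|\ge 1-c$, hence $\bigl|\frac{w}{1+w}\bigr|\le \frac{|w|}{1-c}$, and integrating gives the same bound $\frac{|z|^2}{2(1-c)}$.

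There is essentially no real obstacle in this lemma; the only point that deserves a word of care is that $z$ is meant to range over complex values (the lemma will be applied with $z = A_n(t)+\mathrm{i}B_n(t)$), so one must work with the holomorphic principal branch of $\log$ on the open unit disc rather than the real logarithm. The power series above is valid verbatim in that setting, and the restriction $|z|<c<1$ keeps $1+z$ uniformly bounded away from the branch cut, so all manipulations above are legitimate.
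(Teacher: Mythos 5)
Your proof is correct and complete, with the explicit constant $m(c)=\frac{1}{2(1-c)}$; the paper itself gives no proof of this lemma (it only cites \cite{G2018}), and the power-series estimate you use is the standard argument one would expect there. Your remark that $z$ must be allowed to be complex, so that the principal branch of $\log$ on the unit disc is the right object, is exactly the point that matters for the application in Theorem \ref{1}.
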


The theorem that follows is a convergence result for the sequence of random variables $Y_n$ defined above. Its proof is motivated by the proof of Theorem 3 in \cite{G2016} which is a convergence result for the Luroth random variables.

\begin{theorem}\label{1}
	\sl
	Let $\{a_{k,n}\}_{n \geq 1\atop k\leq n}$ be an array of positive  numbers such that for some sequence $\{\rho_n\}_{n \geq 1}$ with $\displaystyle\lim_{n \to \infty}\rho_n \log n = + \infty$, we have
	\begin{align}&\label{x}
	\limsup_{n \to \infty }\frac{\sum_{k=1}^n \alpha_k a_{k,n} | \log (\alpha_ka_{k,n})|  }{ \rho_n\log n}< \infty, \quad   \quad \lim_{n \to \infty }\frac{\sum_{k=1}^n\alpha_k a_{k,n}  \log (\alpha_k a_{k,n} ) }{ \rho_n\log n} \,\,\hbox{ exists} =:-\ell ;\\&\label{y} 
	\hbox{   and    the    sequence} \quad n \mapsto \sum_{k=1}^n \alpha_k a_{k,n} \, \,\hbox{is bounded. }   
	\end{align}  
	Moreover, assume that for the family of distributions $\{F_n\}_n$ conditions (i) and (ii) are satisfied. Then 
	$$T_n:= \frac{1}{\rho_n\log n}\sum_{k = 1}^{n}a_{k,n} Y_k\mathop{\longrightarrow}^P \ell , \qquad n \to \infty.$$
\end{theorem}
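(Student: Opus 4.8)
The plan is to show that the characteristic functions $\phi_n(t):=\mathbb{E}\bigl[e^{\mathrm{i}tT_n}\bigr]$ converge, for each fixed $t\in\mathbb{R}$, to $e^{\mathrm{i}\ell t}$; by L\'evy's continuity theorem this gives $T_n\to\ell$ in distribution, and since $\ell$ is deterministic, also in probability. Write $L_n:=\rho_n\log n$ (so $L_n\to\infty$) and $s_{k,n}:=t a_{k,n}/L_n$. By independence of $\{U_k\}_{k\le n}$ and the decomposition $\psi_k=1+A_k+\mathrm{i}B_k$,
$$\phi_n(t)=\prod_{k=1}^{n}\psi_k(s_{k,n})=\prod_{k=1}^n\bigl(1+z_{k,n}\bigr),\qquad z_{k,n}:=A_k(s_{k,n})+\mathrm{i}\,B_k(s_{k,n}),$$
and it suffices to prove that $\sum_{k=1}^n z_{k,n}\to\mathrm{i}\ell t$ while the product is asymptotic to $\exp\bigl(\sum_k z_{k,n}\bigr)$.

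First I would record the elementary consequences of the hypotheses needed to linearize. By (i) the $\alpha_k$ are bounded away from $0$, so \eqref{y} forces $\sum_{k=1}^n a_{k,n}=O(1)$; hence $\max_{k\le n}a_{k,n}=O(1)$ and $\max_{k\le n}|s_{k,n}|\le|t|\,O(1)/L_n\to 0$. Lemma \ref{bounds}(b) gives $|A_k(s_{k,n})|\le C|s_{k,n}|$ uniformly in $k$, and Proposition \ref{integralconv} gives $B_k(s_{k,n})=s_{k,n}\bigl(c_{F_k}-\alpha_k\log|s_{k,n}|+\eta_k(s_{k,n})\bigr)$ with $\max_{k\le n}|\eta_k(s_{k,n})|\to 0$, the last fact using precisely the uniformity in that Proposition together with $\max_k|s_{k,n}|\to 0$. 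Since $c_{F_k}$ and $\alpha_k$ are bounded uniformly (by (i)--(ii)) and $x|\log x|\to 0$ as $x\to0^+$, we get $\max_{k\le n}|z_{k,n}|\to 0$. So for $n$ large every $|z_{k,n}|<c$, each $1+z_{k,n}$ lies in the unit disc about $1$ (so the principal logarithm is defined and $1+z_{k,n}=e^{\log(1+z_{k,n})}$), and Lemma \ref{linearizzazione} yields
$$\phi_n(t)=\exp\Bigl(\sum_{k=1}^n\log(1+z_{k,n})\Bigr),\qquad \sum_{k=1}^n\log(1+z_{k,n})=\sum_{k=1}^n z_{k,n}+\theta_n,\quad|\theta_n|\le m(c)\sum_{k=1}^n|z_{k,n}|^2.$$

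It remains to check (a) $\sum_k A_k(s_{k,n})\to 0$, (b) $\sum_k B_k(s_{k,n})\to\ell t$, and (c) $\sum_k|z_{k,n}|^2\to 0$. For (a): $\sum_k|A_k(s_{k,n})|\le C\sum_k|s_{k,n}|=C|t|\,O(1)/L_n\to 0$. For (b), substitute the expansion of $B_k$, use $\log|s_{k,n}|=\log|t|+\log a_{k,n}-\log L_n$, and regroup:
$$\sum_{k=1}^n B_k(s_{k,n})=\frac{t}{L_n}\Bigl(\sum_k a_{k,n}c_{F_k}-\log|t|\sum_k\alpha_k a_{k,n}+\log L_n\sum_k\alpha_k a_{k,n}-\sum_k\alpha_k a_{k,n}\log a_{k,n}+\sum_k a_{k,n}\eta_k(s_{k,n})\Bigr).$$
The first two inner sums are $O(1)$ and vanish after dividing by $L_n$; the third is $O(\log L_n)$ and $(\log L_n)/L_n\to 0$; the last is $o(1)\cdot O(1)$. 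In the fourth, write $\log a_{k,n}=\log(\alpha_k a_{k,n})-\log\alpha_k$: since $\log\alpha_k$ is bounded, $\sum_k\alpha_k a_{k,n}\log\alpha_k=O(1)$, while \eqref{x} gives $\sum_k\alpha_k a_{k,n}\log(\alpha_k a_{k,n})=-\ell L_n+o(L_n)$, so this term contributes exactly $\ell t+o(1)$ after multiplying by $t/L_n$. For (c), use $\sum_k|z_{k,n}|^2\le\bigl(\max_k|z_{k,n}|\bigr)\sum_k|z_{k,n}|$ with the maximum $\to 0$ and $\sum_k|z_{k,n}|=O(1)$; the only non-routine bound is $\sum_k|s_{k,n}\log|s_{k,n}||=O(1)$, which reduces, via $\alpha_k$ bounded below and the first part of \eqref{x}, to $\sum_k\alpha_k a_{k,n}|\log(\alpha_k a_{k,n})|=O(L_n)$. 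Combining, $\sum_k\log(1+z_{k,n})=\mathrm{i}\ell t+o(1)$, hence $\phi_n(t)\to e^{\mathrm{i}\ell t}$, which finishes the argument.

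I expect the main obstacle to be step (b): keeping track of all the error terms and, above all, making sure the remainder $\eta_k$ supplied by Proposition \ref{integralconv} is genuinely uniform over $k\le n$ — which is exactly why the bound $\max_{k\le n}|s_{k,n}|\to 0$, and therefore $\sum_k a_{k,n}=O(1)$ coming from \eqref{y}, must be established first. The step rewriting $\log(\alpha_k a_{k,n})$ as $\log a_{k,n}$ is where hypothesis \eqref{x} truly enters, and it is routine only because condition (i) keeps the $\alpha_k$ bounded away from both $0$ and $\infty$.
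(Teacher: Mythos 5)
Your proposal is correct and follows essentially the same route as the paper: L\'evy's continuity theorem, the factorization of the characteristic function by independence, linearization of the logarithm via Lemma \ref{linearizzazione}, and the three limits for $\sum_k A_k$, $\sum_k B_k$ and $\sum_k|z_{k,n}|^2$, with Lemma \ref{bounds}(b) and the uniform convergence of Proposition \ref{integralconv} doing the real work. The only (harmless) difference is that you invoke Proposition \ref{integralconv} directly, with its exact constant $c_{F_k}$ and uniform remainder, where the paper uses the weaker asymptotic form of Corollary \ref{general1} and an $\epsilon$-sandwich; this also lets you treat both signs of $t$ at once.
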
 

\begin{remark}\rm \label{r1} Note that because of condition (i),
	\begin{enumerate}
	 \item[(a)]$ \displaystyle n \mapsto \sum_{k=1}^n \alpha_k a_{k,n} $ is bounded if and only if  $\displaystyle n \mapsto \sum_{k=1}^n  a_{k,n} $ is. 
	 
	 \item[(b)]Similarly, $ \displaystyle\frac{\sum_{k=1}^n\alpha_k a_{k,n}  \log (\alpha_k a_{k,n} ) }{\rho_n \log n} \to-\ell $  if and only if $ \displaystyle\frac{\sum_{k=1}^n \alpha_k a_{k,n}  \log (  a_{k,n} ) }{ \rho_n\log n} \to-\ell.$
	 \end{enumerate}
 \end{remark}

 \noindent \begin{proof}
	Let $\psi_{T_n}$ be the characteristic function of $T_n$.
	We use Levy's Theorem and prove that
	$$\lim_{n \to \infty}\log\psi_{T_n}(t) ={\rm i}t  \ell \qquad \forall \, t\in \mathbb{R}.$$
	Denote for simplicity
	$$c_{k,n}:=\frac{ a_{k,n}}{\rho_n\log n}, \qquad k=1, \dots, n  .$$
 In the sequel, the expression $\log\psi_{T_n}(t) $ denotes the analytic continuation of $\log z$ along the path $(\psi_{T_n}(t))_{t\in \mathbb{R}}$ starting from $\log\psi_{T_n}(0) =0$ under the assumption that $\psi_{T_n}(t)$ is a characteristic function without real zeros. Then,

\[
\log\psi_{T_n}(t)= \sum_{k=1}^n \log E\left[{\rm e}^{{\rm i}t c_{k,n}Y_k } \right]= \sum_{k=1}^n\log \psi_k(tc_{k,n})=\sum_{k=1}^n  \log\left(1+A_k(tc_{k,n})+{\rm i}B_k(tc_{k,n})\right).
\]
	
	\noindent
By Lemma \ref{linearizzazione}, it suffices to prove that
	\begin{itemize}
		\item[(a)] $$\sum_{k=1}^n A_k(tc_{k,n})\to 0, \qquad n \to \infty;$$
		\item[(b)]$$\sum_{k=1}^n B_k(tc_{k,n})\to t \ell, \qquad n \to \infty;$$
		\item[(c)]$$\sum_{k=1}^n\left(A_k^2(tc_{k,n})+ B_k^2(tc_{k,n})\right)\to 0, \qquad n \to \infty.$$		
	\end{itemize}
Assume first that $t>0$.  

\medskip

\noindent (a)  Let  $ S:=\sup_n\sum_{k=1}^n \alpha_{k}a_{k,n}$. Notice that assumption
(i) ensures that 
$$c_{k,n} = \frac{a_{k,n}}{\rho_n\log n}=\frac{\alpha_{k}a_{k,n}}{\alpha_{k}\rho_n\log n} \leq \frac{S}{C\rho_n\log n},$$
so there exists   $n_0$  such that, for $n > n_0$ and $k =1, \dots, n,$ we have $c_{k,n }< 1$.
By Lemma \eqref{bounds} (ii)  we have
\begin{align*}&
\left|\sum_{k=1}^{n}A_k(tc_{k,n})\right| \leq \sum_{k=1}^{n}C|t c_{k,n}|\leq \frac{C t}{\rho_n\log n}\sum_{k=1}^{n}a_{k,n}\to  0, \qquad n \to \infty
\end{align*} 	
because of %condition \eqref{y}. 
the first part of Remark \ref{r1}. 

\medskip

\noindent (b)  Due to the uniform convergence described in Corollary \ref{general1}, we have that $\forall\epsilon>0$ there exists $c\in (0,1]$ such that, for all $k$ and for all $t<c$
\[ 
1-\epsilon<-\dfrac{1}{\alpha_kt\log t}\int_{0}^{1}\left(\sin \frac{t}{u}\right){\rm d}F_k\left(u\right)<1+\epsilon.
\]
By employing assumption (i) again, let $C>0$ be such that $\alpha_k >C$ for every $k$ and let $n_0\in\mathbb{N}$ such that $ \forall n>n_0 $, $ \displaystyle\frac{tS}{C\rho_n \log n}< c$. Then,
\[
0<tc_{k,n}=\frac{t\alpha_ka_{k,n}}{\alpha_k\rho_n\log n}\leq \dfrac{tS}{C\rho_n\log n}<c,
\]
which ensures that, for every $k=1, \dots, n$,
\[ 
1-\epsilon<-\dfrac{1}{\alpha_kc_{k,n}t\log (tc_{k,n})}\int_{0}^{1}\left(\sin \frac{tc_{k,n}}{u}\right){\rm d}F_k\left(u\right)<1+\epsilon.
\]
Recall that $tc_{k,n}< c\in(0,1]$ and therefore the logarithm is negative. Hence, for $n>n_0$,
\[ 
-(1-\epsilon)\sum_{k=1}^{n}\alpha_kc_{k,n}t\log (tc_{k,n})<\sum_{k=1}^{n}\int_{0}^{1}\left(\sin \frac{tc_{k,n}}{u}\right){\rm d}F_k\left(u\right)<- (1+\epsilon)\sum_{k=1}^{n}\alpha_kc_{k,n}t\log (tc_{k,n}),
\]
which means that
\[
\lim_{n\to\infty}\dfrac{1}{\sum_{k=1}^{n}\alpha_kc_{k,n}t\log (tc_{k,n})}\sum_{k=1}^{n}\int_{0}^{1}\left(\sin \frac{tc_{k,n}}{u}\right){\rm d}F_k\left(u\right)= -1.
\]
Then,
\begin{eqnarray*}
	\sum_{k=1}^{n}B_k(tc_{k,n}) &=& \sum_{k=1}^{n}\int_{0}^{1}\left(\sin \frac{tc_{k,n}}{u}\right){\rm d}F_k\left(u\right)\\
	&\sim&- t\sum_{k=1}^{n}\alpha_kc_{k,n}\log (tc_{k,n})\\
	&=&-  t\log t\sum_{k=1}^{n}\alpha_kc_{k,n}-  t\sum_{k=1}^{n}\alpha_kc_{k,n}\log c_{k,n}.
\end{eqnarray*}
Observe that the first term converges to zero as $n$ tends to infinity while for the second term we have that
\[
\sum_{k=1}^{n}\alpha_kc_{k,n}\log c_{k,n} = \frac{1}{\rho_n\log n}\sum_{k=1}^{n}\alpha_ka_{k,n}\log a_{k,n}-\frac{\log(\rho_n\log n)}{\rho_n\log n}\sum_{k=1}^{n}\alpha_ka_{k,n} \to -\ell,\qquad n\to\infty,
\]
by Remark \ref{r1}.
Thus,
\[  
\sum_{k=1}^{n}B_k(tc_{k,n}) \to  t \ell ,\qquad n\to\infty.
\]
(c) For the third convergence first observe that 
\[
\sum_{k=1}^{n}A^2_k(tc_{k,n}) \leq \left(\sum_{k=1}^{n}A_k(tc_{k,n}) \right)^2\to 0, \qquad n\to\infty.
\]
Next, with an argument analogous to the one used for $\sum_{k=1}^{n}B_ k(tc_{k,n})$, we write
\[
	\sum_{k=1}^{n}B^2_k(tc_{k,n}) \sim t^2\sum_{k=1}^{n}\alpha_k^2c_{k,n}^2\log^2(tc_{k,n}), \qquad n \to \infty.
\]
For the latter summation we have that
\[
\sum_{k=1}^{n}\alpha_k^2c_{k,n}^2\log^2(tc_{k,n})\leq \dfrac{2(\log t)^2}{(\rho_n\log n)^2}\sum_{k=1}^{n} \alpha_k^2a_{k,n}^2 +\dfrac{2}{(\rho_n\log n)^2}\sum_{k=1}^{n} \alpha_k^2a_{k,n}^2\log^2c_{k,n}.
\]
Note that for the first term we have
\[
\dfrac{1}{( \rho_n\log n)^2}\sum_{k=1}^{n} \alpha_k^2 a_{k,n}^2 \leq \left(\dfrac{1}{\rho_n\log n}\sum_{k=1}^{n}\alpha_k a_{k,n}\right)^2\to 0,\qquad n\to \infty
\]
while for the second term we have
\begin{eqnarray*}
	\sum_{k=1}^{n} \dfrac{ \alpha_k^2a^2_{k,n}}{\rho_n^2\log^2 n}\log^2c_{k,n}&\leq& 2\sum_{k=1}^{n}\dfrac{\alpha_k^2a^2_{k,n}}{\rho_n^2\log^2 n}(\log^2a_{k,n}+\log^2(\rho_n\log n)) \\
	&=&\dfrac{2}{\rho_n^2\log^2 n}\sum_{k=1}^{n}\alpha_k^2a^2_{k,n}\log^2a_{k,n}+\dfrac{2\log^2(\rho_n\log n)}{\rho_n^2\log^2 n}\sum_{k=1}^{n}\alpha_k^2a_{k,n}^2.
\end{eqnarray*}
Note that 
\[
\dfrac{\log^2(\rho_n\log n)}{\rho^2_n\log^2 n}\sum_{k=1}^{n}\alpha_k^2a_{k,n}^2 \leq \dfrac{(\log(\rho_n\log n))^2}{(\rho_n\log n)^2}\left(\sum_{k=1}^{n}\alpha_ka_{k,n}\right)^2\leq \left(\dfrac{S\log(\rho_n\log n)}{\rho_n\log n}\right)^2\to 0,\qquad n\to\infty.
\]
By assumption \eqref{y}, we have for every $k,n$  
$$ a_{k,n}= \frac{\alpha_k a_{k,n}}{\alpha_k }\leqslant \frac{ \sum_{k=1}^n \alpha_ka_{k,n}}{C} \leqslant \frac{S}{C};$$  
hence, from  the inequality
$$\sup_{0\leqslant x \leqslant \frac{S}{C}} x  |\log x|< \infty,$$
we deduce
\begin{align}\label{mm}
0 \leqslant  \frac{\sum_{k=1}^n{\alpha_k^2 a^2_{k,n}\log^2 a _{k,n}}}{\rho_n^2\log^2 n}\leqslant C\frac{\sum_{k=1}^n a _{k,n}|\log  a _{k,n}|}{ \rho_n^2\log^2 n}\to 0, \qquad n \to \infty,
\end{align}
by  assumptions \eqref{x} and  (i).% \eqref{limit}

\medskip

\noindent Consider now the case $t<0$. Starting again from the calculation of $\psi_{T_n}$, we see that
\begin{align*}&
\psi_{T_n}(t) = 1+\int_0^1 \left(\cos \frac{t}{u}-1\right) {\rm d}F_n(u) +{\rm i} \int_0^1 \left(\sin \frac{t}{u}\right) {\rm d}F_n(u)\\
&=1+\int_0^1 \left(\cos \left(-\frac{t}{u}\right)-1\right){\rm d}F_n(u)  -{\rm i} \int_0^1 \sin \left(-\frac{t}{u}\right) {\rm d}F_n(u)= 1+A_n(|t|)-{\rm i}B_n(|t|).
\end{align*}
Hence, by the same arguments as in case $t>0$  and observing that there is a minus sign before $B_n(|t|)$, we get again$${\rm (a)} \lim_{n\to \infty}\sum_{k=1}^n A_k(tc_{k,n}) =0;\quad{\rm (b)} \lim_{n\to \infty} - \sum_{k=1}^n B_k(tc_{k,n})   =-|t| \ell =t\ell;\quad {\rm (c)}\lim_{n\to \infty}\sum_{k=1}^n\big(A_k^2(tc_{k,n})+ B_k^2(tc_{k,n})) =0,$$
which concludes the proof for the case $t<0$ and of the Theorem.
\end{proof}

\medskip

\noindent For the proof of Theorem \ref{2} we recall the following result from \cite{CGH2019}, which allows the transition from the random variables $Y_n$ to general Oppenheim expansions.

\begin{theorem}\sl \label{teoremadistanza}  Let $\{R_n\}_{n\geq 1}$ be as in \eqref{Rdef} and let $U_1,\ldots, U_n$ be independent random variables such that $U_n\sim F_n$ for any integer $n$. Let $\phi_{R_1, \dots, R_n}$ be the characteristic function of the vector $(R_1, \dots, R_n)$ and let $\psi_n$ be the characteristic function of the random variable defined as $Y_n = U_n^{-1}$ for every $n$. Then, for every $  (t_{1,n}, \dots, t_{n,n})\in \mathbb{R}^n$ and $n \geqslant 1$ we have
	$$\left| \phi_{R_1, \dots, R_n}(t_{1,n }, \dots, t_{n,n })-\prod_{k=1}^n\psi_k(t_{k,n })\right|\leqslant  \sum_{k=1}^n |t_ {k,n}|.$$
\end{theorem}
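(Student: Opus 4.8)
The plan is to estimate the difference $\phi_{R_1,\dots,R_n}(t_{1,n},\dots,t_{n,n})-\prod_{k=1}^n\psi_k(t_{k,n})$ by a hybrid (telescoping) argument, replacing one coordinate at a time. Concretely, one writes
$$
\phi_{R_1,\dots,R_n}(\mathbf{t})-\prod_{k=1}^n\psi_k(t_{k,n})
=\sum_{j=1}^n\Bigl(E\bigl[{\rm e}^{{\rm i}\sum_{k\le j}t_{k,n}R_k}\bigr]\prod_{k>j}\psi_k(t_{k,n})
-E\bigl[{\rm e}^{{\rm i}\sum_{k<j}t_{k,n}R_k}\bigr]\psi_j(t_{j,n})\prod_{k>j}\psi_k(t_{k,n})\Bigr),
$$
so that the $j$-th summand is controlled by the difference between the true conditional characteristic function of $R_j$ given $R_1,\dots,R_{j-1}$ and $\psi_j(t_{j,n})$. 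Since each $\prod_{k>j}\psi_k$ has modulus at most $1$, it suffices to bound, for each fixed history $h_1,\dots,h_{j-1}$ (equivalently $\Theta_1,\dots,\Theta_j$), the quantity
$$
\Bigl|E\bigl[{\rm e}^{{\rm i}t_{j,n}R_j}\mid \Theta_j=h_j,\dots,\Theta_1=h_1\bigr]-\psi_j(t_{j,n})\Bigr|
$$
by $|t_{j,n}|$, and then sum over $j$.

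The heart of the matter is therefore the one-step estimate. Here I would use the explicit conditional law: given $\Theta_n=h_n,\dots,\Theta_1=h_1$ (which fixes $Q_n=q_n(h_1,\dots,h_n)=:q$ and $\varphi_n(\Theta_n)=:\varphi$), the variable $R_n=\bigl(\Theta_{n+1}+\varphi q\bigr)/\bigl(\varphi(1+q)\bigr)$ takes the value $\bigl(k+\varphi q\bigr)/\bigl(\varphi(1+q)\bigr)=1/\delta_n(h_n,k,q)$ with probability $F_n(\delta_n(h_n,k,q))-F_n(\delta_n(h_n,k+1,q))$ for the relevant integers $k$. Setting $x_k=\delta_n(h_n,k,q)\in(0,1]$ one checks that $1/\delta_n(h_n,k,q)=1/x_k$, and that as $k$ runs over its range the points $x_k$ form a decreasing partition of an interval inside $(0,1]$, with $x_k\to 0$. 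Consequently the conditional characteristic function of $R_n$ is a Riemann–Stieltjes sum
$$
E\bigl[{\rm e}^{{\rm i}t R_n}\mid\cdots\bigr]=\sum_k {\rm e}^{{\rm i}t/x_k}\bigl(F_n(x_k)-F_n(x_{k+1})\bigr),
$$
which we must compare with $\psi_n(t)=\int_0^1{\rm e}^{{\rm i}t/u}{\rm d}F_n(u)$.

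The key analytic input is then that $u\mapsto {\rm e}^{{\rm i}t/u}$, while not of bounded variation near $0$, can be handled by a summation-by-parts / integration-by-parts comparison: on each subinterval $(x_{k+1},x_k]$ one replaces the constant value ${\rm e}^{{\rm i}t/x_k}$ by ${\rm e}^{{\rm i}t/u}$ and estimates the error using $\bigl|{\rm e}^{{\rm i}t/x_k}-{\rm e}^{{\rm i}t/u}\bigr|\le |t|\bigl|1/u-1/x_k\bigr|$ together with the fact that the oscillation telescopes. The cleanest route is to integrate by parts in both the sum and the integral: writing both in terms of $F_n$ multiplied by the increments of ${\rm e}^{{\rm i}t/u}$, the difference collapses to boundary terms plus $\int$ of $F_n(u)$ against $\tfrac{t}{u^2}{\rm e}^{{\rm i}t/u}$ minus its "sampled" counterpart, and using $0\le F_n(u)\le u\cdot(\text{bounded})$ from Lemma \ref{bounds}(a) one bounds everything by a constant times $|t|$ — in fact the constant can be taken to be $1$ after being slightly careful, since $|{\rm e}^{{\rm i}a}-{\rm e}^{{\rm i}b}|\le|a-b|$ and the mesh points are nested. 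This one-step bound by $|t_{j,n}|$, uniform over the conditioning, is the main obstacle: one has to organize the partial-summation so that the near-singularity at $u=0$ does not produce an extra divergent factor, which is exactly where condition (i) (giving $F_n(u)/u$ bounded) is used. Once the per-step bound $|t_{j,n}|$ is in hand, taking absolute values in the telescoping identity, using $|\psi_k|\le 1$ and $|E[{\rm e}^{{\rm i}(\cdots)}]|\le 1$ for the remaining factors, and summing over $j=1,\dots,n$ yields $\sum_{k=1}^n|t_{k,n}|$, which is the claim.
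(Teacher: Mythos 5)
The paper does not actually prove this theorem; it is quoted from \cite{CGH2019}. Your overall architecture — a telescoping decomposition reducing everything to a one-step bound $\bigl|E[{\rm e}^{{\rm i}tR_j}\mid \Theta_1,\dots,\Theta_j]-\psi_j(t)\bigr|\le|t|$ holding almost surely, then $|\psi_k|\le 1$ and the tower property — is exactly the right skeleton and matches the cited argument; your identification of the conditional law of $R_j$ as the discretization $\sum_k{\rm e}^{{\rm i}t/x_k}\bigl(F_j(x_k)-F_j(x_{k+1})\bigr)$ with $x_k=\delta_j(h_j,k,q)$ is also correct.

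The gap is in how you finish the one-step estimate. Writing the difference as $\int_0^1\bigl({\rm e}^{{\rm i}t/g(u)}-{\rm e}^{{\rm i}t/u}\bigr)\,{\rm d}F_j(u)$ with $g(u)=x_k$ on $(x_{k+1},x_k]$, the inequality $|{\rm e}^{{\rm i}a}-{\rm e}^{{\rm i}b}|\le|a-b|$ reduces everything to bounding $|1/g(u)-1/u|$, and the decisive point — which you never state — is the arithmetic structure of the partition: $1/x_k=(k+\varphi q)/(\varphi(1+q))$, so consecutive reciprocals are \emph{equally spaced} with gap $1/(\varphi(1+q))\le 1$, whence $|1/g(u)-1/u|\le 1$ pointwise and the integral is at most $|t|F_j(1)\le|t|$. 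Your proposed substitute — an integration-by-parts comparison invoking Lemma \ref{bounds}(a) and condition (i) — is both unnecessary and insufficient: the theorem assumes nothing about $F_n$ beyond $F_n(0)=0$ (conditions (i)--(ii) are not hypotheses here), and pushing $F_n(u)/u\le M$ through the parts formula yields at best a bound $C|t|$ with $C$ depending on $M$, not the constant $1$ the statement requires. The assertion that ``the constant can be taken to be $1$ after being slightly careful'' is exactly the point that needs proof, and the proof is the equal-spacing observation, not an analytic property of $F_n$.
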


 \begin{theorem} \label{2}
	\sl  
	In the same assumptions as in Theorem \ref{1}  and Theorem \ref{teoremadistanza}, we have
	$$\frac{1}{\rho_n\log n}\sum_{k = 1}^{n}a_{k,n} R_k\mathop{\longrightarrow}^P\ell, \qquad n \to \infty.$$
\end{theorem}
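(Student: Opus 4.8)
The plan is to deduce the statement from Theorem \ref{1} and Theorem \ref{teoremadistanza} by comparing characteristic functions, using the fact that convergence in distribution to a constant is the same as convergence in probability to that constant. Set, as in the proof of Theorem \ref{1},
$$c_{k,n} := \frac{a_{k,n}}{\rho_n\log n}, \qquad k = 1, \dots, n,$$
and write $S_n := \sum_{k=1}^n c_{k,n} R_k$ for the random variable whose limit law we want to identify, and $T_n := \sum_{k=1}^n c_{k,n} Y_k$ for the corresponding object built from the $Y_k$'s. First I would observe that, for fixed $t \in \mathbb{R}$, the characteristic function of $S_n$ at $t$ is exactly $\phi_{R_1,\dots,R_n}(tc_{1,n},\dots,tc_{n,n})$, while by independence of the $U_k$ (hence of the $Y_k$) the characteristic function of $T_n$ at $t$ is $\prod_{k=1}^n \psi_k(tc_{k,n})$.

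Next I would apply Theorem \ref{teoremadistanza} with the choice $t_{k,n} = tc_{k,n}$, which gives
$$\left|\psi_{S_n}(t) - \psi_{T_n}(t)\right| = \left|\phi_{R_1,\dots,R_n}(tc_{1,n},\dots,tc_{n,n}) - \prod_{k=1}^n \psi_k(tc_{k,n})\right| \leq \sum_{k=1}^n |tc_{k,n}| = \frac{|t|}{\rho_n\log n}\sum_{k=1}^n a_{k,n}.$$
By Remark \ref{r1}(a) together with assumption \eqref{y}, the sequence $n \mapsto \sum_{k=1}^n a_{k,n}$ is bounded, and since $\rho_n\log n \to +\infty$ by hypothesis, the right-hand side tends to $0$ as $n \to \infty$. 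Hence $\psi_{S_n}(t) - \psi_{T_n}(t) \to 0$ for every $t$.

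Finally, Theorem \ref{1} states $T_n \to \ell$ in probability, so $\psi_{T_n}(t) \to {\rm e}^{{\rm i}t\ell}$ for all $t$ by Lévy's continuity theorem; combined with the previous step this yields $\psi_{S_n}(t) \to {\rm e}^{{\rm i}t\ell}$ for all $t \in \mathbb{R}$. Another application of Lévy's theorem gives $S_n \to \ell$ in distribution, and since the limit is the constant $\ell$ this is equivalent to convergence in probability, which is the claim. I do not expect a genuine obstacle here: the argument is essentially a transfer of the conclusion of Theorem \ref{1} along the estimate of Theorem \ref{teoremadistanza}. The only points requiring a word of care are the passage from characteristic functions of the coordinate vectors to those of the weighted sums (a linear substitution in the argument), and invoking Remark \ref{r1}(a) to know that $\sum_k a_{k,n}$ — rather than $\sum_k \alpha_k a_{k,n}$ — is bounded.
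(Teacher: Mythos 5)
Your proposal is correct and follows essentially the same route as the paper: both apply Theorem \ref{teoremadistanza} with $t_{k,n}=tc_{k,n}$, bound $\sum_k|t_{k,n}|$ by the boundedness of $n\mapsto\sum_k a_{k,n}$ (via Remark \ref{r1} and \eqref{y}) over $\rho_n\log n\to\infty$, and then transfer the conclusion of Theorem \ref{1} through L\'evy's continuity theorem. The only difference is that you spell out the final step (convergence in law to a constant implies convergence in probability), which the paper leaves implicit.
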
 
\begin{proof}
	 Define $W_n:= \frac{1}{\rho_n\log n}\sum_{k = 1}^{n}a_{k,n} R_k$ and let $\phi_{W_n}$ be its characteristic function.  Then,
	$$\phi_{W_n}(t)= E\left[e^{it \sum_{k=1}^n c_{k,n}R_k } \right]=
	\phi_{R_1, \dots, R_n}(t_ {{1,n} }, \dots, t_{{n,n}} )$$
	with $t_{k,n}=   tc_{k,n}$.  Recall that by assumption (i)%\eqref{limit}
	$$tc_{k,n} = \frac{t\alpha_ka_{k,n}}{\alpha_k \rho_n\log n}\leq \frac{t\alpha_ka_{k,n}}{C\rho_n\log n}.$$ 
   Thus, by applying Theorem \ref{teoremadistanza} we have that
	$$\left| \phi_{R_1, \dots, R_n}(t_ {k,1}, \dots, t_{k,n} )-\prod_{k=1}^n\psi_k(t_ {k,n})\right|\leqslant  \sum_{k=1}^n |t_ {k,n}|=  |t|\frac{\sum_{k=1}^n  \alpha_k a_{k,n} }{\rho_n\log n} \to 0, \qquad n \to \infty,$$
	where the convergence is established due to \eqref{y} . The desired result follows from Theorem \ref{1}.
\end{proof}

\begin{remark} \rm
It is important to highlight that Theorem \ref{2} is valid for any Oppenheim expansion for which the involved distribution functions satisfy the desired conditions. 
\end{remark}

\noindent As a direct consequence of Theorem \ref{2} a series of weak laws can be obtained.  Particularly,   weak exact laws  are  proven in the next four Corollaries for the L\"uroth, the Engel and the Sylvester sequences respectively. 
 
\begin{corollary} \sl
	\label{Luroth}Let $\{D_n\}_{n \geq 1}$ be the L\"uroth sequence (see the Introduction). Under the assumptions of Theorem \ref{1} and Theorem \ref{teoremadistanza}, we have 
	\begin{equation*}
	\frac{1}{\rho_n\log n}\sum_{k=1}^{n} a_{k,n}D_{k+1} \mathop{\longrightarrow}^P \ell.
	\end{equation*}
\end{corollary}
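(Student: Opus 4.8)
The plan is to read off the L\"uroth sequence as the special case of the Oppenheim scheme defining the $R_n$ in \eqref{Rdef} in which every $F_n$ is the uniform distribution on $[0,1]$, and then to quote Theorem~\ref{2}. As recalled in the Introduction, the continuous analogue of the L\"uroth digit law $P(D_n=k)=\frac1{k(k-1)}$ is the law of $Y_n=1/U_n$ with $U_n$ uniform on $(0,1)$, i.e.\ $F_n(t)=t$ on $[0,1]$; and with this choice of $\{F_n\}_n$ (together with the appropriate $\varphi_n$ and $q_n\equiv 0$) the random variables $R_n$ of \eqref{Rdef} realise jointly the L\"uroth digits $D_{n+1}$, the index shift $n\mapsto n+1$ being exactly the one built into \eqref{Rdef}. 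So the first step is just to make this identification explicit, citing \cite{G2018}, and thereby to reduce the statement to an instance of Theorem~\ref{2} for this particular family.

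The second step is the (trivial) verification that conditions (i) and (ii) hold for $F_n(t)=t$. Here $F_n(t)/t\equiv 1$, so one takes $\alpha_n=1$ for every $n$: then $0<\liminf_n\alpha_n=\limsup_n\alpha_n=1<\infty$ and $\sup_n\big|F_n(t)/t-\alpha_n\big|=0$ for every $t$, which is (i); and $\frac1u\big|F_n(u)/u-\alpha_n\big|\equiv 0$, so the supremum of integrals in \eqref{uniformintegrability} is identically $0$, which is (ii). Since the array hypotheses \eqref{x}--\eqref{y} and the independence of the variables $U_n\sim F_n$ required by Theorem~\ref{teoremadistanza} are assumed in the statement, all the hypotheses of Theorem~\ref{2} are in force.

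Theorem~\ref{2} then gives $\frac{1}{\rho_n\log n}\sum_{k=1}^n a_{k,n}R_k\to\ell$ in probability, which, upon translating back through $R_k=D_{k+1}$, is the assertion of the Corollary. If one prefers to argue only at the level of joint laws, observe that $\frac{1}{\rho_n\log n}\sum_{k=1}^n a_{k,n}D_{k+1}$ then has, for each fixed $n$, the same law as $\frac{1}{\rho_n\log n}\sum_{k=1}^n a_{k,n}R_k$, and that convergence in law to the deterministic limit $\ell$ is equivalent to convergence in probability; this transfers the conclusion. (Should the chosen parametrisation produce $R_k=D_{k+1}-1$ instead, one simply absorbs the deterministic correction $\frac{1}{\rho_n\log n}\sum_{k=1}^n a_{k,n}$, which tends to $0$ since $\sum_k a_{k,n}$ is bounded by \eqref{y} while $\rho_n\log n\to\infty$.) There is no genuine obstacle here: the mathematical content is carried entirely by Theorem~\ref{2}, and all that has to be handled with care is the parametrisation of the Oppenheim scheme that produces the L\"uroth digits---the index shift in particular---together with the one-line check of (i)--(ii).
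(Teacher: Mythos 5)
Your proposal is correct and follows essentially the same route as the paper: identify the L\"uroth digits with the Oppenheim scheme \eqref{Rdef} (the paper takes $R_n=D_{n+1}-1$), apply Theorem~\ref{2}, and absorb the resulting deterministic correction $\frac{1}{\rho_n\log n}\sum_{k=1}^n a_{k,n}\to 0$ using the boundedness of $\sum_k a_{k,n}$ from \eqref{y} together with $\rho_n\log n\to\infty$, exactly as in your closing parenthetical. The extra verification of conditions (i)--(ii) for the uniform $F_n$ is harmless but redundant, since the Corollary already assumes the hypotheses of Theorem~\ref{1}.
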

\begin{proof} It is known  (see \cite{G2018}) that 
	 $R_n = D_{n+1} -1$ falls into the scheme described  by \eqref{Rdef} for $\varphi_n(h) =0$ and $Q_n = 0$. According to Theorem \ref{2} 
	\[
	\frac{1}{\rho_n \log n}\sum_{k=1}^{n} a_{k,n}(D_{k+1}-1) \mathop{\longrightarrow}^P \ell, \quad n\to \infty,
	\]
	which can be equivalently written as
	\[
	\frac{1}{ \rho_n\log n}\sum_{k=1}^{n} a_{k,n}D_{k+1}-\frac{1}{\rho_n\log n}\sum_{k=1}^{n} a_{k,n} \mathop{\longrightarrow}^P \ell,\qquad n\to \infty. 
	\]
	Note that due to Remark \ref{r1}, the second term converges to zero, which leads to the desired conclusion.	
\end{proof}

\begin{corollary}\sl
	\label{Lurothit} Let $\{D^{(\alpha,1)}_n\}_{n \geq 1}$   be the sequence  defined in \eqref{equazione10} (for a given $\alpha < 1$). Under the assumptions of Theorem \ref{1} and Theorem \ref{teoremadistanza}, we have 
	\begin{equation*}
	\frac{1}{\rho_n\log n}D_n^{(\alpha,1)} \mathop{\longrightarrow}^P \ell.
	\end{equation*}
\end{corollary}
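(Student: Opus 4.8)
\textbf{Proof plan for Corollary \ref{Lurothit}.}

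The plan is to recognize $D^{(\alpha,1)}_n$ as a weighted sum of the L\"uroth variables $\{D_k\}_{k\geq 1}$ of the form covered by Corollary \ref{Luroth}, and then to verify that the weights satisfy hypotheses \eqref{x} and \eqref{y} of Theorem \ref{1}. Recalling \eqref{equazione10} with $r=0$, we have
\[
D^{(\alpha,1)}_n = \frac{1}{W_n}\sum_{k=1}^n w_k D_k = \frac{1}{W_n}\sum_{k=1}^n \frac{1}{k^\alpha} D_k,
\]
where $W_n = \sum_{k=1}^n k^{-\alpha}$. Since for the L\"uroth sequence $D_k$ is i.i.d. and $D_{k+1}$ has the same law as $D_k$ (shifting the index only discards $D_1$, which is negligible after normalization), the first step is to rewrite this, up to an asymptotically vanishing term, as $\frac{1}{\rho_n\log n}\sum_{k=1}^n a_{k,n} D_{k+1}$ with an appropriate choice of $\rho_n$ and $a_{k,n}$; for the L\"uroth case $\alpha_k \equiv 1$, so the conditions in Theorem \ref{1} are stated directly in terms of the $a_{k,n}$.

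The natural choice is $\rho_n\log n := W_n$ and $a_{k,n} := w_k = k^{-\alpha}$. Since $\alpha < 1$, one has $W_n \sim \frac{n^{1-\alpha}}{1-\alpha}$, and hence $\rho_n = W_n/\log n \sim \frac{n^{1-\alpha}}{(1-\alpha)\log n}$, which gives $\rho_n\log n = W_n \to \infty$ as required. The second step is to check \eqref{y}: $\sum_{k=1}^n a_{k,n} = W_n/(\rho_n\log n) = 1$, which is certainly bounded (in fact constant). The third step is to check \eqref{x}. We compute
\[
\frac{\sum_{k=1}^n a_{k,n}\log a_{k,n}}{\rho_n\log n} = \frac{1}{W_n}\sum_{k=1}^n \frac{-\alpha\log k}{k^\alpha}.
\]
By comparison with the integral $\int_1^n \frac{\log x}{x^\alpha}\,dx \sim \frac{n^{1-\alpha}\log n}{1-\alpha}$, one finds $\sum_{k=1}^n k^{-\alpha}\log k \sim \frac{n^{1-\alpha}\log n}{1-\alpha} \sim W_n\log n$, so the ratio above tends to $-\alpha\log n$... which is \emph{not} convergent. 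This signals that the correct choice is instead to absorb a $\log$ factor: one should take $\rho_n$ so that $\rho_n\log n$ asymptotically matches $\sum_{k=1}^n k^{-\alpha}$ but the limit $\ell$ is read off from \eqref{x} applied to the \emph{recentered} weights. Concretely, writing $c_{k,n} = a_{k,n}/W_n$, the quantity $\sum_k c_{k,n}\log c_{k,n}$ is exactly $\frac{1}{W_n}\sum_k k^{-\alpha}\log(k^{-\alpha}/W_n) = \frac{1}{W_n}\sum_k k^{-\alpha}(-\alpha\log k - \log W_n)$, and the standard computation gives $\sum_k k^{-\alpha}\log k = W_n\log n\cdot\frac{1}{1-\alpha}(1+o(1)) - \frac{W_n}{(1-\alpha)^2}(1+o(1))$ plus lower-order terms; combined with $\log W_n \sim (1-\alpha)\log n$ one obtains cancellation of the leading $\log n$ terms, leaving $\sum_k c_{k,n}\log c_{k,n} \to -\ell$ for an explicit constant $\ell$ depending on $\alpha$. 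This is the step I expect to be the main obstacle: it is a purely computational but delicate Abel-summation / Euler–Maclaurin estimate where the two leading logarithmic contributions must be shown to cancel, and one must also verify the $\limsup$ condition $\sum_k \alpha_k a_{k,n}|\log(\alpha_k a_{k,n})|/(\rho_n\log n) < \infty$, which follows from the same asymptotics since all terms are eventually of one sign.

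Once \eqref{x} and \eqref{y} are verified with this $\ell$, the fourth step is to invoke Corollary \ref{Luroth} (equivalently Theorem \ref{2}) to conclude
\[
\frac{1}{\rho_n\log n}\sum_{k=1}^n a_{k,n} D_{k+1} = \frac{1}{W_n}\sum_{k=1}^n \frac{D_{k+1}}{k^\alpha} \mathop{\longrightarrow}^P \ell,
\]
and finally to note that replacing $D_{k+1}$ by $D_k$ changes the sum by $\frac{1}{W_n}(\sum_{k=1}^n k^{-\alpha}D_k - \sum_{k=1}^n k^{-\alpha}D_{k+1})$, a telescoping-type difference dominated by $\frac{1}{W_n}(D_1 + \sum_{k=2}^n |k^{-\alpha}-(k-1)^{-\alpha}|D_k + n^{-\alpha}D_{n+1})$; since $|k^{-\alpha}-(k-1)^{-\alpha}| = O(k^{-\alpha-1})$ and $\sum k^{-\alpha-1}D_k$ has bounded expectation-like growth while $W_n\to\infty$, this difference tends to $0$ in probability. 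Hence $\frac{1}{\rho_n\log n}D^{(\alpha,1)}_n = \frac{1}{W_n}\sum_{k=1}^n k^{-\alpha}D_k \mathop{\longrightarrow}^P \ell$, which is the claim.
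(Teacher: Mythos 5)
There is a genuine gap, and it begins with the normalization. In the corollary, $D_n^{(\alpha,1)}=\frac{1}{W_n}\sum_{k=1}^n w_kD_k$ already carries the factor $1/W_n$, and the statement divides \emph{additionally} by $\rho_n\log n$; the array to feed into Theorem \ref{1} (via Corollary \ref{Luroth} or Theorem \ref{2}) is therefore $a_{k,n}=w_k/W_n$ with $\rho_n$ a separate sequence (for instance $\rho_n\equiv 1$), not $a_{k,n}=w_k$ with $\rho_n\log n=W_n$. By conflating the two you end up asserting $\frac{1}{W_n}\sum_{k=1}^n k^{-\alpha}D_{k+1}\mathop{\longrightarrow}^P\ell$ with $\ell$ finite, which is false: the $D_k$ have infinite mean and such weighted averages grow like $\log n$ (Theorem \ref{convergenzainprobabilita} is exactly the case $\alpha=0$). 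Relatedly, the ``cancellation'' you invoke does not occur: $\sum_k\frac{w_k}{W_n}\log\frac{w_k}{W_n}=-\frac{\alpha}{W_n}\sum_k k^{-\alpha}\log k-\log W_n\sim-\alpha\log n-(1-\alpha)\log n=-\log n$, which diverges; condition \eqref{x} only holds after the further division by $\rho_n\log n$. In fact none of this verification is needed: the corollary is stated ``under the assumptions of Theorem \ref{1}'', i.e.\ \eqref{x} and \eqref{y} are hypotheses on the array $w_k/W_n$, not something to be established.

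The second gap is in the passage from $D_{k+1}$ back to $D_k$. The paper does this by re-indexing the sum, $\sum_{k=2}^na_{k,n}D_k=\sum_{k=1}^{n-1}a_{k+1,n}(R_k+1)$ with $R_k=D_{k+1}-1$, and then comparing $a_{k+1,n}$ with $a_{k,n}$ through the monotonicity of $w_k=k^{-\alpha}$, so that Theorem \ref{2} applies directly. Your telescoping bound instead requires showing that $\frac{1}{W_n}\sum_k k^{-\alpha-1}D_k$ (suitably normalized) vanishes, and the justification ``bounded expectation-like growth'' is vacuous here because $E[D_k]=+\infty$; a separate stochastic argument (truncation, or a Borel--Cantelli estimate on the $D_k$) would be needed to make that term go to $0$ in probability. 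As written, the proposal proves neither the statement of the corollary nor a correct substitute for it.
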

\begin{proof}
Consider the sequence $a_{k,n}  = w_k/W_n$, $1\leq k \leq n$, where $w_k = k^{-\alpha}$ and $W_n = \sum_{k=1}^{n}w_k$. Then,
	\begin{align*}
	&\sum_{k=2}^{n}a_{k,n}D_k = \sum_{k=1}^{n-1}a_{k+1,n}D_{k+1}=\sum_{k=1}^{n-1}a_{k+1,n}(R_k+1) =\sum_{k=1}^{n-1}a_{k+1,n} R_k + \sum_{k=1}^{n-1}a_{k+1,n}.
	\end{align*}
	Observe that 
	\begin{align*}
	&\frac{1}{\rho_n\log n}\sum_{k=1}^{n-1}a_{k+1,n} R_k+\frac{1}{\rho_n\log n}\sum_{k=1}^{n-1}a_{k+1,n} = \frac{1}{\rho_n\log n}\sum_{k=1}^{n-1}\frac{a_{k+1,n}}{a_{k,n}} a_{k,n}R_k+\frac{1}{\rho_n\log n}\sum_{k=1}^{n-1}a_{k+1,n}\\
	&\leq \frac{1}{\rho_n\log n}\sum_{k=1}^{n-1} a_{k,n}R_k+\frac{1}{\rho_n\log n}\sum_{k=1}^{n-1}a_{k+1,n} \mathop{\longrightarrow}^P \ell,\qquad n\to \infty
	\end{align*}
	due to Theorem \ref{2}. Recall that $D_n^{(\alpha,1)} = \sum_{k=1}^{n}a_{k,n}D_k$ and the result follows immediately.
\end{proof}

 \begin{corollary} \sl
	\label{Engel}	  Let $\{D_n\}_{n \geq 1}$ be the Engel's sequence (see \cite{G2018}, Section 5). Under the assumptions of Theorem \ref{1} and Theorem \ref{teoremadistanza}, and the additional condition 
	\begin{equation}\label{equazione7}
	  \sup_n\max_{k \leq n}a_{k,n}< \infty
	\end{equation}
we have 
	\begin{equation*}
	\frac{1}{ \rho_n\log n}\sum_{k=1}^{n} a_{k,n}\frac{D_{k+1}}{D_k} \mathop{\longrightarrow}^P \ell.
	\end{equation*}
 \end{corollary}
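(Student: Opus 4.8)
The plan is to recognise the Engel sequence inside the scheme of Section~3, apply Theorem~\ref{2} to the appropriate auxiliary ratios, and then absorb the discrepancy between those ratios and $D_{k+1}/D_k$ by means of the extra hypothesis \eqref{equazione7}.

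First I would recall from \cite{G2018} (Section~5; see also \cite{ERS1958}, \cite{G1976}) how the Engel series expansion is captured by \eqref{Rdef}: one takes $F_n$ to be the uniform law on $[0,1]$, $Q_n\equiv 0$, $\varphi_n(h)=h$, and the integer variables $\Theta_n=D_n-1$, so that
\[
R_n=\frac{\Theta_{n+1}}{\varphi_n(\Theta_n)}=\frac{\Theta_{n+1}}{\Theta_n}=\frac{D_{n+1}-1}{D_n-1}.
\]
For the uniform law $F_n(t)/t\equiv 1$, hence conditions (i) and (ii) hold automatically with $\alpha_n\equiv 1$, and the only non-trivial assumptions left are those of Theorem~\ref{1} on the array $\{a_{k,n}\}$. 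Thus Theorem~\ref{2} applies and gives $\frac{1}{\rho_n\log n}\sum_{k=1}^{n}a_{k,n}R_k\to\ell$ in probability, so all that remains is the passage from $R_k=(D_{k+1}-1)/(D_k-1)$ to the ratio $D_{k+1}/D_k$ appearing in the statement.

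For that I would compute the discrepancy. From $D_{k+1}-D_k=(R_k-1)(D_k-1)$ one obtains
\[
R_k-\frac{D_{k+1}}{D_k}=\frac{D_{k+1}-D_k}{D_k(D_k-1)}=\frac{R_k-1}{D_k},
\]
a nonnegative quantity, so it is enough to prove $\frac{1}{\rho_n\log n}\sum_{k=1}^{n}a_{k,n}\frac{R_k-1}{D_k}\to 0$ in probability. This is where \eqref{equazione7} is used: setting $M:=\sup_n\max_{k\le n}a_{k,n}<\infty$, the left-hand side is bounded by $\frac{M}{\rho_n\log n}\sum_{k=1}^{n}\frac{R_k-1}{D_k}$, and since $\rho_n\log n\to\infty$ the question reduces to the almost sure convergence of the nonnegative series $\sum_k\frac{R_k-1}{D_k}\le\sum_k\frac{R_k}{D_k}$. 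Here I would invoke the classical asymptotics of the Engel digits: from $\Theta_k=\Theta_1\prod_{j=1}^{k-1}R_j$ one has $D_k\ge\prod_{j=1}^{k-1}R_j$, while $\frac1k\log D_k\to 1$ a.s.\ and $P(R_k>x)\le 1/x$; a Borel--Cantelli argument then forces $R_k/D_k$ to decay geometrically a.s., so the series converges. Finally, writing $\frac{1}{\rho_n\log n}\sum_{k=1}^{n}a_{k,n}\frac{D_{k+1}}{D_k}=\frac{1}{\rho_n\log n}\sum_{k=1}^{n}a_{k,n}R_k-\frac{1}{\rho_n\log n}\sum_{k=1}^{n}a_{k,n}\frac{R_k-1}{D_k}$ and letting $n\to\infty$ yields the claim.

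I expect the \textbf{main obstacle} to be this last point — the almost sure finiteness of the error series $\sum_k(R_k-1)/D_k$. The delicacy is that $R_k$, equivalently $D_{k+1}/D_k$, is heavy-tailed with infinite conditional mean, so a crude moment estimate is useless and one must genuinely balance this heavy tail against the exponential growth of $D_k$. Everything else — the identification of the scheme and the algebraic treatment of the error term — is routine, and the purpose of \eqref{equazione7} is precisely to make the array $\{a_{k,n}\}$ drop out, leaving the array-free statement $\sum_k(R_k-1)/D_k<\infty$ a.s.
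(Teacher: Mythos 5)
Your proposal agrees with the paper up to and including the application of Theorem \ref{2} to the ratios $R_k=(D_{k+1}-1)/(D_k-1)$, but it handles the passage to $D_{k+1}/D_k$ by a genuinely different mechanism. The paper argues multiplicatively: $D_{k+1}\ge D_k$ gives $D_{k+1}/D_k\le R_k$ pointwise (the upper half of a sandwich), while the a.s. equivalence $R_n\sim D_{n+1}/D_n$ --- which requires nothing more than $D_n\to\infty$ a.s. --- gives $D_{k+1}/D_k\ge(1-\epsilon)R_k$ for all $k$ beyond some $n_0(\omega)$; condition \eqref{equazione7} enters only to show that the finitely many terms $k\le n_0$ are $o(1)$ after division by $\rho_n\log n$. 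You argue additively, via the (correct) identity $R_k-D_{k+1}/D_k=(R_k-1)/D_k$, and use \eqref{equazione7} to factor the array out entirely, reducing the problem to the a.s. finiteness of $\sum_k(R_k-1)/D_k$. That reduction is clean and the decomposition is valid; what it buys is an array-free error estimate, and what it costs is a quantitative control on the growth of $D_k$ that the paper never needs.

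That cost is where your argument is not self-contained. To sum the error series you invoke $\frac{1}{k}\log D_k\to 1$ a.s. together with a Borel--Cantelli bound $R_k\le k^2$ eventually; the first of these is the classical result of \cite{ERS1958} for the standard Engel expansion (uniform $F_n$), but the corollary is stated under the assumptions of Theorem \ref{1}, which permit an arbitrary family $\{F_n\}$ subject only to conditions (i) and (ii). Those conditions constrain $F_n$ near $0$ and say nothing about its mass near $1$; an $F_n$ concentrated near $1$ makes $R_k$ close to $1$ most of the time, so exponential growth of $D_k$ is not guaranteed in this generality, and without it the bound $R_k\le k^2$ alone yields partial sums of order $n^3$, which is useless. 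So either restrict your argument to the classical Engel case and cite \cite{ERS1958} explicitly, or prove the needed growth of $D_k$ under (i)--(ii), or adopt the multiplicative sandwich, which needs only $D_n\to\infty$ a.s. and sidesteps the issue.
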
 
\begin{proof} 
Let $\displaystyle R_n = \dfrac{D_{n+1} -1}{D_n-1}$. It is known (see \cite{G2018}) that 
	 $R_n$ falls into the scheme described in \eqref{Rdef}  with $\varphi_n (h)=h$ and $Q_n=0$. According to Theorem 3.4 
	\[
	\frac{1}{ \rho_n\log n}\sum_{k=1}^{n} a_{k,n}\frac{D_{k+1}-1}{D_k-1} \mathop{\longrightarrow}^P \ell,\quad n\to\infty.
	\]
Moreover, it follows from $D_{k+1} \geq D_k $ (see \cite{G2018}) that  
$$\dfrac{D_{k+1}  }{D_k }\leq  \dfrac{D_{k+1}-1 }{D_k -1},$$	
and this inequality  ensures that
\begin{equation}\label{equazione6}
\limsup_{n \to \infty} \frac{1}{\rho_n\log n}\sum_{k=1}^{n} a_{k,n}\dfrac{D_{k+1}  }{ D_k  } \leq \lim_{n \to \infty} \frac{1}{ \rho_n\log n}\sum_{k=1}^{n} a_{k,n}\dfrac{D_{k+1} -1}{ D_k -1}= \ell.
\end{equation}

\noindent On the other hand, as it is proved in  \cite{G2018},   we also have
	
	$$\dfrac{D_{n+1} -1}{D_n-1}\sim  \dfrac{D_{n+1} }{D_n}$$
	thus, for every $ 0<\epsilon <1$ there exists $n_0$ such that, for every $k > n_0$ we have
	$$  \dfrac{D_{k+1} }{D_k}\geq  (1-\epsilon)\dfrac{D_{n+1} -1}{D_k-1} $$
	and, as a consequence, for every $n > n_0$
	$$ \frac{1}{  \rho_n\log n}\sum_{k=n_0+1}^{n}a_{k,n}\dfrac{D_{k+1} }{D_k}\geq(1-\epsilon) \frac{1}{\rho_n\log n}\sum_{k=n_0+1}^{n}a_{k,n}  \dfrac{D_{n+1} -1}{D_k-1}. $$
	Notice that
	 $$0 \leq \frac{1}{ \rho_n\log n}	\sum_{k=  1}^{n_0}a_{k,n}\dfrac{D_{k+1} }{D_k}\leq \frac{\sup_n\max_{k \leq n}a_{k,n}}{\rho_n\log n}\sum_{k=  1}^{n_0} \dfrac{D_{k+1}  }{D_k}\to 0,\qquad n \to \infty, $$ 
	 by assumption \eqref{equazione7};   by the same argument we have also
	 $$\frac{1}{ \rho_n\log n}	\sum_{k=  1}^{n_0}a_{k,n}\dfrac{D_{k+1}-1 }{D_k-1}\to 0,\qquad n \to \infty.$$
We deduce that
$$\liminf_{n \to \infty} \frac{1}{  \rho_n\log n}\sum_{k=1}^{n} a_{k,n}\dfrac{D_{k+1}  }{ D_k  } \geq (1-\epsilon)\lim_{n \to \infty} \frac{1}{ \rho_n\log n}\sum_{k=1}^{n} a_{k,n}\dfrac{D_{k+1} -1}{ D_k -1}= \ell$$ 	
and by the arbitrariness of $\epsilon$ we 
have $$\liminf_{n \to \infty} \frac{1}{\rho_n\log n}\sum_{k=1}^{n} a_{k,n}\dfrac{D_{k+1}  }{ D_k  } \geq \ell, $$
which, together with \eqref{equazione6}, concludes the proof.	
\end{proof}
  
\begin{corollary} \sl
	\label{Sylvester} Let $\{D_n\}_{n \geq 1}$ be the Sylvester's sequence (see \cite{G2018}, Section 5). Under the assumptions of Theorem \ref{1} and Theorem \ref{teoremadistanza} and condition \eqref{equazione7}
	\[
	\frac{1}{\rho_n\log n}\sum_{k=1}^{n} a_{k,n}\frac{D_{k+1}}{D_k^2} \mathop{\longrightarrow}^P \ell,\quad n\to\infty.
	\]
\end{corollary}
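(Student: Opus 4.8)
The plan is to mimic the proof of Corollary \ref{Engel}: realize a suitable ratio of consecutive Sylvester digits as a sequence $\{R_n\}_{n\ge1}$ of Oppenheim type \eqref{Rdef}, apply Theorem \ref{2} to it, and then transfer the conclusion to the sequence $D_{k+1}/D_k^2$ by an $\epsilon$-sandwich together with condition \eqref{equazione7}.

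Concretely, I would first recall from \cite{G2018} (Section 5) that the Sylvester digits fit \eqref{Rdef} with $Q_n\equiv0$ and $\varphi_n(h)=h(h-1)$, so that $R_n=\dfrac{D_{n+1}-1}{(D_n-1)(D_n-2)}$, and that these digits grow rapidly, $D_{n+1}\ge D_n(D_n-1)+1$, in particular $D_n\uparrow\infty$. Theorem \ref{2} then yields directly
$$\frac{1}{\rho_n\log n}\sum_{k=1}^n a_{k,n}\,\frac{D_{k+1}-1}{(D_k-1)(D_k-2)}\ \mathop{\longrightarrow}^P\ \ell ,\qquad n\to\infty .$$

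Since $D_k\to\infty$, one has $\dfrac{D_{k+1}}{D_k^2}\sim\dfrac{D_{k+1}-1}{(D_k-1)(D_k-2)}$, so for each $0<\epsilon<1$ there is $n_0$ with $(1-\epsilon)\,\dfrac{D_{k+1}-1}{(D_k-1)(D_k-2)}\le\dfrac{D_{k+1}}{D_k^2}\le(1+\epsilon)\,\dfrac{D_{k+1}-1}{(D_k-1)(D_k-2)}$ for all $k>n_0$. Splitting $\sum_{k=1}^n$ at $n_0$, the two finite head-sums divided by $\rho_n\log n$ tend to $0$ by \eqref{equazione7} (each is bounded by $\sup_n\max_{k\le n}a_{k,n}$ times a fixed finite sum of ratios), while the two tails are squeezed within a factor $(1\pm\epsilon)$ of each other; letting $n\to\infty$ and then $\epsilon\downarrow0$ gives $\frac{1}{\rho_n\log n}\sum_{k=1}^n a_{k,n}D_{k+1}/D_k^2\to\ell$ in probability.

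There are two routine bookkeeping points. First, the $R_n$ produced by \eqref{Rdef} is a shifted ratio, so after applying Theorem \ref{2} one must add back the correction $\frac{1}{\rho_n\log n}\sum_{k=1}^n a_{k,n}$, which vanishes in the limit by Remark \ref{r1}(a) and assumption \eqref{y}. Second, the squeeze should be phrased as a convergence-in-probability statement, exactly as in Corollary \ref{Engel}. The only genuine obstacle is the transfer step — obtaining the two-sided comparison between $D_{k+1}/D_k^2$ and the Oppenheim ratio and controlling the finite initial block via \eqref{equazione7} — and this is completely parallel to the Engel case; all the probabilistic content is already contained in Theorem \ref{2}.
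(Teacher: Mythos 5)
Your proposal is correct and follows essentially the same route as the paper: apply Theorem \ref{2} to the Sylvester ratio from \eqref{Rdef}, then transfer to $D_{k+1}/D_k^2$ by the asymptotic comparison and the head/tail split using \eqref{equazione7}, exactly as in Corollary \ref{Engel}. Two small slips, neither of which affects validity: the correct Oppenheim variable is $R_n=\frac{D_{n+1}-1}{D_n(D_n-1)}$ (with $\varphi_n(h)=h(h-1)$ and $Q_n=0$), not $\frac{D_{n+1}-1}{(D_n-1)(D_n-2)}$ — the paper moreover exploits the exact inequality $\frac{D_{k+1}}{D_k^2}\leq\frac{D_{k+1}-1}{D_k(D_k-1)}$ to get the limsup bound without any $\epsilon$ — and the additive ``$+1$'' correction you mention is a feature of the L\"uroth case only; here $R_n$ is already the ratio itself, so no such correction arises (though including it is harmless since it vanishes).
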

\begin{proof}
Let $\displaystyle R_n = \dfrac{D_{n+1} -1}{D_n(D_n-1)}$. It is known (see \cite{G2018}) that 
	$R_n  $ falls into the scheme described by \eqref{Rdef} for $\varphi_n (h)=h(h-1)$ and $Q_n=0$.  According to Theorem 3.4 
\[
\frac{1}{ \rho_n\log n}\sum_{k=1}^{n} a_{k,n}\frac{D_{k+1}-1}{D_k(D_k-1)} \mathop{\longrightarrow}^P \ell,\quad n\to\infty.
\]
Note that $R_n \geq 1$ and this leads to $D_{n+1}\geq D_n^2-D_n +1 \geq D_n$. Thus
\[
\frac{D_{n+1}}{D_n^2}\leq \frac{D_{n+1}-1}{D_n(D_n-1)},
\]
which ensures that 
\begin{equation}\label{equazione8}
\limsup_{n \to \infty} \frac{1}{  \rho_n\log n}\sum_{k=1}^{n} a_{k,n}\dfrac{D_{k+1}  }{ D_k^2  } \leq \lim_{n \to \infty} \frac{1}{  \rho_n\log n}\sum_{k=1}^{n} a_{k,n}\dfrac{D_{k+1} -1}{ D_k(D_k -1)}= \ell.
\end{equation}
Furthermore,  as it is proved in  \cite{G2018},
$$\dfrac{D_{n+1} -1}{D_n(D_n-1)}\sim  \dfrac{D_{n+1} }{D_n^2}.$$
The rest of the argument runs as in the proof of Corollary \ref{Engel}.
\end{proof}

\begin{remark}\rm
	For the case $\rho_n \equiv 1$, $a_{k,n} = n^{-1}$, $k \leq n$ and $\alpha_n \equiv 1$ for all $n$, Corollaries \ref{Luroth}, \ref{Engel} and \ref{Sylvester} reduce  to Theorems 5.1, 5.2 and 5.3 in \cite{G2018} respectively. 
\end{remark}	

\section{Convergence in distribution for independent random variables}

The main result of this section is Theorem \ref{generalres}, where the convergence in distribution of weighted sums of a sequence of independent random variables is proven. The result is then used to obtain convergence results for specific sequences of particular interest.

\subsection{A general result for independent random variables}
 \begin{theorem} \sl 
	\label{generalres} Let $\{Z_n\}_{n \geq 1}$ be a sequence of { independent} random variables. For every $n$ define 
	\begin{equation*} \label{equazione3}
	g_{n} (t) = \frac{\phi_{n}(t) -{\rm e}^{{\rm i}t } }{{\rm e}^{{\rm i}t }-1},
	\end{equation*}
	where $\phi_{n}(t):=\phi_{Z_n}(t)= E\left[{\rm e}^{{\rm i}t Z_n}\right]$. Assume that
	\begin{itemize}
		\item[(i)] 
		\begin{equation}
		\label{as1} \limsup_{t \to 0}\frac{\sup_n |g_{n}(t)| }{ \left | \log |t |\right |^\eta}< \infty
		\,\, \qquad \hbox{for some } \eta >0;
		\end{equation}
		\item[(ii)] there exist two sequences $ 0\leq\{c_{1,n}\}_{n\geq 1}  $   and $\{c_{2,n}\}_{n\geq 1}$ such that  %two real numbers  $c_1$ and $c_2$ such that 
			\begin{equation}
			\label{as2} %\lim_{t \to 0} g_{\color{red}n}(t) +c_1 \log (1- {\rm e}^{{\rm i}t }\big)- c_{2,n}|=0. 
		 %\lim_{n\to \infty}\lim_{t\to 0} \sup_k  \big|   g_k(t) +c_{1,n}\log (1- {\rm e}^{{\rm i}t }\big) -c_{2,k}\big|=0.
		 \lim_{t\to 0} \sup_n  \big|   g_n(t) +c_{1,n}\log (1- {\rm e}^{{\rm i}t }\big) -c_{2,n}\big|=0. 
			\end{equation}
		
	\end{itemize}
	Furthermore, let $\{a_{k,n}\}_{n \geq 1\atop k\leq n}$ be an array of positive  numbers such that
	\begin{equation}   
	\lim_{n \to \infty}  \sum_{k=1}^n a_{k,n} \,\,  \hbox{exists} =\kappa ;\label{equazione1} \end{equation}  
	\begin{equation} \label{equazione2}
	\lim_{n \to \infty} m_n  =0, \qquad \hbox{ where} \qquad  m_n:=\max_{1\leqslant k \leqslant n}a_{k,n};
	\end{equation} 
	 
	\begin{equation} \label{equazione4}
	\lim_{n \to \infty}  \sum_{k=1}^n a_{k,n}c_{1,k} \,\,  \hbox{exists} =\ell.
	\end{equation} 
Let     $$V_n:= \sum_{k=1}^n a_{k,n}Z_k  -\left(\kappa +\sum_{k=1}^n a_{k,n}c_{2,k}\right)  +\sum_{k=1}^na_{k,n}c_{1,k}\log a_{k,n}.$$  
Then 
\begin{equation}
\label{res}V_n\mathop {\longrightarrow }^\mathcal{L} \mu,   \qquad n \to \infty  ,
\end{equation}
where $\mu$ is the probability law on $[0,1]$ determined by the characteristic function
	$$\xi(t) =\exp \left(-\frac{\pi}{2}\ell|t|-{\rm i}t \ell\log|t| \right).$$
In particular, if $\lim_{n \to \infty} c_{1,n}= c_1$, then   \eqref{res} holds true,  where $\mu$ is the probability law on $[0,1]$ determined by the characteristic function
	$$\xi(t) =\exp \left(-\frac{\pi}{2}\kappa c_1|t|-{\rm i}t c_1\kappa\log|t| \right).$$
\end{theorem}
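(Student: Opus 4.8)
The plan is to compute the characteristic function $\phi_{V_n}$ of $V_n$ and show it converges pointwise to $\xi(t)$, then invoke Lévy's continuity theorem. By independence of the $Z_k$, writing $v_n := \kappa + \sum_{k=1}^n a_{k,n}c_{2,k} - \sum_{k=1}^n a_{k,n}c_{1,k}\log a_{k,n}$ for the centering term, we have
\[
\log \phi_{V_n}(t) = -{\rm i}tv_n + \sum_{k=1}^n \log \phi_k(t a_{k,n}),
\]
where again $\log$ denotes the analytic continuation along the path starting at $0$. The definition $g_n(t) = (\phi_n(t)-{\rm e}^{{\rm i}t})/({\rm e}^{{\rm i}t}-1)$ rearranges to $\phi_n(t) = {\rm e}^{{\rm i}t} + ({\rm e}^{{\rm i}t}-1)g_n(t) = 1 + ({\rm e}^{{\rm i}t}-1)(1+g_n(t))$, so that $\phi_n(t) - 1 = ({\rm e}^{{\rm i}t}-1)(1+g_n(t))$. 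First I would record that by \eqref{equazione2} the arguments $ta_{k,n}$ are uniformly small for large $n$, so that each $\phi_k(ta_{k,n})$ lies near $1$; combined with the growth bound \eqref{as1} on $\sup_n|g_n|$, one gets $|\phi_k(ta_{k,n}) - 1| \le C|t|a_{k,n}|\log(|t|a_{k,n})|^\eta$, which in view of \eqref{equazione1}--\eqref{equazione2} tends to $0$ uniformly in $k$. This justifies the linearization (via Lemma \ref{linearizzazione}) replacing $\sum_k \log\phi_k(ta_{k,n})$ by $\sum_k (\phi_k(ta_{k,n})-1)$ up to an error controlled by $\sum_k |\phi_k(ta_{k,n})-1|^2$, which vanishes by the same estimates.

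Next I would expand $\sum_{k=1}^n (\phi_k(ta_{k,n}) - 1) = \sum_{k=1}^n ({\rm e}^{{\rm i}ta_{k,n}}-1)(1+g_k(ta_{k,n}))$. The term $\sum_k ({\rm e}^{{\rm i}ta_{k,n}}-1)$ behaves like ${\rm i}t\sum_k a_{k,n} \to {\rm i}t\kappa$ (the quadratic correction being $O(\sum a_{k,n}^2) \to 0$), which will cancel the $\kappa$ appearing in the centering. For the remaining piece $\sum_k ({\rm e}^{{\rm i}ta_{k,n}}-1)g_k(ta_{k,n})$, I would use hypothesis \eqref{as2}: uniformly in $k$, $g_k(ta_{k,n}) = -c_{1,k}\log(1-{\rm e}^{{\rm i}ta_{k,n}}) + c_{2,k} + o(1)$ as the argument $\to 0$. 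Since $1-{\rm e}^{{\rm i}s} \sim -{\rm i}s$ as $s\to 0$, we have $\log(1-{\rm e}^{{\rm i}ta_{k,n}}) = \log(ta_{k,n}) + \log(-{\rm i}) + o(1) = \log t + \log a_{k,n} - {\rm i}\pi/2 + o(1)$ for $t>0$ (with the obvious sign adjustments for $t<0$). Substituting and using $({\rm e}^{{\rm i}ta_{k,n}}-1)\approx {\rm i}ta_{k,n}$ again, the sum $\sum_k ({\rm e}^{{\rm i}ta_{k,n}}-1)g_k(ta_{k,n})$ becomes, up to $o(1)$,
\[
{\rm i}t\sum_{k=1}^n a_{k,n}\Big(-c_{1,k}(\log t + \log a_{k,n} - {\rm i}\tfrac{\pi}{2}) + c_{2,k}\Big)
= -{\rm i}t\log t \sum_k a_{k,n}c_{1,k} - {\rm i}t\sum_k a_{k,n}c_{1,k}\log a_{k,n} - \tfrac{\pi}{2}t\sum_k a_{k,n}c_{1,k} + {\rm i}t\sum_k a_{k,n}c_{2,k}.
\]
Now the $\sum_k a_{k,n}c_{1,k}\log a_{k,n}$ term cancels against the corresponding centering term in $v_n$, and the $\sum_k a_{k,n}c_{2,k}$ term cancels likewise; using \eqref{equazione4}, $\sum_k a_{k,n}c_{1,k} \to \ell$, so what survives is $-\tfrac{\pi}{2}\ell t - {\rm i}t\ell\log t$, i.e. exactly $\log\xi(t)$ for $t>0$. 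The case $t<0$ follows by the same bookkeeping, tracking that $\log(1-{\rm e}^{{\rm i}s})$ picks up $+{\rm i}\pi/2$ instead, yielding $-\tfrac{\pi}{2}\ell|t| - {\rm i}t\ell\log|t|$. The final "in particular" clause is immediate: if $c_{1,n}\to c_1$ then by \eqref{equazione1} and a Toeplitz/Cesàro-type argument $\sum_k a_{k,n}c_{1,k} \to \kappa c_1$, so $\ell = \kappa c_1$.

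The main obstacle I anticipate is making the two linearization/uniformity steps rigorous simultaneously: one must control $\sum_k |\phi_k(ta_{k,n})-1|^2$ and the error terms in replacing $g_k$ by its asymptotic form, uniformly over $k \le n$, and the natural bounds involve quantities like $\sum_k a_{k,n}|\log a_{k,n}|$ and $\sum_k a_{k,n}|\log a_{k,n}|^{2}$ with powers of $\eta$ attached. These are not directly among the hypotheses \eqref{equazione1}--\eqref{equazione4}, so one needs to deduce their vanishing from \eqref{equazione1} and \eqref{equazione2} together — essentially because $x|\log x|^p \to 0$ as $x\to 0^+$ for any $p$, so $\max_k a_{k,n}|\log a_{k,n}|^p \to 0$ by \eqref{equazione2}, and then $\sum_k a_{k,n}|\log a_{k,n}|^p \le (\max_k a_{k,n}|\log a_{k,n}|^p)\cdot(\#\{k\}) $ is the wrong bound; instead one writes $\sum_k a_{k,n}|\log a_{k,n}|^p \le |\log m_n|^p \sum_k a_{k,n}$ when $m_n<1$, wait — that also fails since $|\log a_{k,n}| \ge |\log m_n|$. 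The correct route is $\sum_k a_{k,n}\,\psi(a_{k,n})$ with $\psi(x)=x|\log x|^p/x = |\log x|^p$... rather, one bounds $\sum_k a_{k,n}|\log a_{k,n}|^p$ by splitting according to whether $a_{k,n}$ is below or above a threshold and using that $x\mapsto x|\log x|^p$ is increasing near $0$; the upshot is a bound of the form $C(\sum_k a_{k,n})\cdot \sup_{0<x\le m_n}|\log x|^p \cdot$ — this still diverges, so in fact the clean statement is $\sum_k a_{k,n}|\log a_{k,n}|^p \le (\sum_k a_{k,n})^{1-\epsilon}(\dots)$; the honest resolution, which the authors presumably use, is that $\sum_k a_{k,n}|\log a_{k,n}|^2 \le |\log m_n|\sum_k a_{k,n}|\log a_{k,n}| $ is again circular, so one instead observes directly that for the squared error $\sum_k |({\rm e}^{{\rm i}ta_{k,n}}-1)g_k(ta_{k,n})|^2 \le \sup_j|g_j|^2 \cdot t^2\sum_k a_{k,n}^2 \le \sup_j|g_j|^2 \cdot t^2 m_n \sum_k a_{k,n}$, and $\sup_j |g_j(ta_{k,n})|$ is at most $C|\log(|t|m_n)|^\eta$ by \eqref{as1}, so the product is $\lesssim m_n |\log m_n|^{2\eta}\to 0$ by \eqref{equazione2}. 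That is the delicate point, and handling it cleanly — propagating \eqref{as1} and \eqref{as2} through the sums while only invoking \eqref{equazione1}, \eqref{equazione2}, \eqref{equazione4} — is where the real work lies; everything else is the bookkeeping of complex logarithms sketched above.
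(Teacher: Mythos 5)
Your strategy is correct and is essentially the paper's own proof: Lévy continuity, linearization of $\sum_k\log\phi_k(ta_{k,n})$ via $|\phi_k-1|^2$, the factorization $\phi_k-1=({\rm e}^{{\rm i}ta_{k,n}}-1)(1+g_k(ta_{k,n}))$, substitution of $g_k$ by $-c_{1,k}\log(1-{\rm e}^{{\rm i}ta_{k,n}})+c_{2,k}$ via (ii), and the complex-logarithm bookkeeping producing $-\frac{\pi}{2}\ell|t|-{\rm i}t\ell\log|t|$ (the paper packages these steps as Lemmas A2--A6 in its Appendix). One small correction to your final estimate: you cannot bound $\sup_j|g_j(ta_{j,n})|$ by $C|\log(|t|m_n)|^\eta$ (small $a_{j,n}$ make $|\log(|t|a_{j,n})|$ large, not small); the paper instead keeps each logarithm paired with its own weight and uses that $x\mapsto x\,|\log x|^{p}$ is increasing near $0$ to get $a_{k,n}|\log(|t|a_{k,n})|^{2\eta}\leq m_n|\log(|t|m_n)|^{2\eta}$ --- exactly the monotonicity device you yourself identify earlier in your discussion, so the gap closes as you anticipated.
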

\begin{proof}
The second statement of the Theorem follows immediately from the first one; therefore we present in detail only the proof of the first part.  Using L\'evy's Continuity Theorem   and under the assumption that $\phi_{V_n}(t)$ is a characteristic function without real zeros we have 
	\begin{align*}
	&\log \left(\phi_{V_n}(t)\right) \\
	&=
	\log \left(\prod_{k=1}^n E\left[e^{{\rm i}t  a_{k,n} Z_k }\right] \right)-{\rm i}t  \left(\kappa+\sum_{k=1}^na_{k,n}c_{2,k}\right) + {\rm i}t \sum_{k=1}^na_{k,n} c_{1,k} \log a_{k,n}\\
	&=
	\sum_{k=1}^n \log E\left[e^{{\rm i}t  a_{k,n} Z_k }\right]   -{\rm i}t  \left(\kappa+\sum_{k=1}^na_{k,n}c_{2,k}\right) +{\rm i}t \sum_{k=1}^na_{k,n} c_{1,k} \log a_{k,n}\\
	& =
	\sum_{k=1}^n \log\left(\phi_{k} (t  a_{k,n})\right) -{\rm i}t  \left(\kappa+\sum_{k=1}^na_{k,n}c_{2,k}\right) + {\rm i}t \sum_{k=1}^na_{k,n} c_{1,k}\log a_{k,n} ,
	\end{align*}
	and we show that
	$$ \sum_{k=1}^n \log\left(\phi_{ k}  (t  a_{k,n})\right) -{\rm i}t  \left(\kappa+\sum_{k=1}^na_{k,n}c_{2,k}\right) + {\rm i}t \sum_{k=1}^n c_{1,k}a_{k,n}\log a_{k,n} +{\rm i} t \ell  \log |t|+ |t| \ell \frac{\pi}{2}  \to  0,\qquad n \to \infty.$$
	Using  Lemma \ref{approx1}, we see that it suffices to calculate the limit of
	\begin{align*}&
	\sum_{k=1}^n \left(   \phi_{k}  (ta_{k,n})-1\right) -{\rm i}t  \left(\kappa+\sum_{k=1}^na_{k,n}c_{2,k}\right)   + {\rm i}t  \sum_{k=1}^na_{k,n}  c_{1,k} \log a_{k,n}  +{\rm i} t \ell  \log |t|+ |t| \ell \frac{\pi}{2} \\
	& =\sum_{k=1}^n \left((e^{{\rm i}ta_{k,n}}-1)  +(e^{{\rm i}ta_{k,n}}-1) g_{k} \left(ta_{k,n} \right) \right)-{\rm i}t  \left(\kappa+\sum_{k=1}^na_{k,n}c_{2,k}\right)
	+{\rm i}t   \sum_{k=1}^na_{k,n}  c_{1,k}\log a_{k,n} +{\rm i} t \ell  \log |t|+ |t| \ell \frac{\pi}{2}\\
	&=I_1 + I_2+I_3+I_4,
	\end{align*}
	where 
	\begin{itemize}
		\item[~] $$I_1 = \sum_{k=1}^n (e^{{\rm i}ta_{k,n}}-1) -{\rm i}t \kappa ,\qquad I_2 =\sum_{k=1}^n(e^{{\rm i}ta_{k,n}}-1-{\rm i}t a_{k,n}) g_k\left( ta_{k,n}  \right),$$
		\item[~]$$I_3 ={\rm i}t \sum_{k=1}^na_{k,n}\left( g_{k}\left(ta_{k,n} \right) +   c_{1,k}  \log (1-e^{{\rm i}ta_{k,n}})- c_{2,k}\right) 
		\qquad
		\mbox{and}$$ \item[~]$$
		I_4 = {\rm i}t   \sum_{k=1}^na_{k,n} c_{1,k} \left(\log a_{k,n}- \log \left(1-e^{{\rm i}ta_{k,n}} \right)\right)  +{\rm i} t \ell  \log |t|+|t| \ell \frac{\pi}{2}.$$
	\end{itemize}
	Observe that $I_1$ goes to 0 since we can write it as 
	\begin{align*}
	\sum_{k=1}^n (e^{{\rm i}ta_{k,n}}-1) -{\rm i}t \kappa = \sum_{k=1}^n  (e^{{\rm i}ta_{k,n}}-1 - {\rm i}t a_{k,n}) +{\rm i}t \left(\sum_{k=1}^n a_{k,n} - \kappa\right).
	\end{align*}
	The second summand goes to 0 by assumption \eqref{equazione1}, while, by {Lemma \ref{approx2}}, 
	$$\sum_{k=1}^n  (e^{{\rm i}ta_{k,n}}-1 - {\rm i}t a_{k,n}) \approx \frac{1}{2}\sum_{k=1}^n a^2_{k,n}\leq \frac{m_n}{2}\sum_{k=1}^n a_{k,n}\leq \frac{S}{2}m_n \to 0,$$
	due to \eqref{equazione2} for $S:= \sup_n \sum_{k=1}^na_{k,n}.$ As for $I_4$, using the  Lemma \ref{approx3} , we get, for sufficiently large $n$,
	\begin{align*}&
		{\rm i}t\sum_{k=1}^na_{k,n} c_{1,k} \left(\log a_{k,n}- \log \left(1-e^{{\rm i}ta_{k,n}} \right)\right)+  {\rm i}t \ell \log |t| + |t|\ell \frac{\pi}{2}\\
		&\approx {\rm i}t\sum_{k=1}^na_{k,n} c_{1,k} \left(\log a_{k,n}-\log \left(-{\rm i}t a_{k,n}\right)\right)+   {\rm i}t \ell \log |t| + |t|\ell \frac{\pi}{2}
		\\&= {\rm i}t\sum_{k=1}^na_{k,n} c_{1,k} \left(\log a_{k,n}-\log \left|t a_{k,n}\right|+ +{\rm i}\frac{\pi}{2}{\rm sign  }\, t\right)+ {\rm i}t \ell \log |t| + |t|\ell \frac{\pi}{2}\\
		& =  {\rm i}t\sum_{k=1}^na_{k,n} c_{1,k} \left( -\log |t|   +{\rm i}\frac{\pi}{2}{\rm sign  }\, t \right)+ {\rm i}t \ell \log |t| + |t|\ell \frac{\pi}{2}\\
	&= - {\rm i}t\log |t|\left(\sum_{k=1}^na_{k,n} c_{1,k}-\ell\right)	 -t 	 {\rm sign  }\, t\frac{\pi}{2}\sum_{k=1}^na_{k,n}c_{1,k}+ |t|\ell \frac{\pi}{2} 	\\
	&=- {\rm i}t\log |t|\left(\sum_{k=1}^na_{k,n} c_{1,k}-\ell\right)	 -|t|\frac{\pi}{2}\left(\sum_{k=1}^na_{k,n} c_{1,k}-\ell\right)\to 0.
		\end{align*}
		%Hence
	%	\begin{align*}&
%	{\rm i}t  c_{1,n} \Big\{\sum_{k=1}^na_{k,n}\Big\{\log a_{k,n}- \log \big(1-e^{{\rm i}ta_{k,n}} \big)\Big\}+ \kappa \Big( \log |t| - \frac{{\rm i}\pi}{2}\Big)\Big\}\to 0,
	%	\end{align*}
	%	since the sequence $\{c_ {1,n}\}$ is bounded.
	Lemma \ref{approx4}  ensures that $I_3$ goes to $0$.  It remains to prove that $I_2$ goes to 0. This follows from the fact that $t g(t) \to 0$ as $t \to 0$  and   Lemma \ref{approx5}  since
	\begin{align*}&
	\left|\sum_{k=1}^n(e^{{\rm i}ta_{k,n}}-1-{\rm i}t a_{k,n}) g_{ k}\left( ta_{k,n}  \right)\right| \approx \frac{1}{2}\left|\sum_{k=1}^n a^2_{k,n}t^2g_{ k}\left( ta_{k,n} \right) \right|
	\leq \frac{1}{2}\sup_{k}\left|a_{k,n}t^2g_{ k}\left( ta_{k,n}  \right) \right|\sum_{k=1}^n a_{k,n}\\& \leq \frac{S}{2} \sup_{k}\left|a_{k,n}t^2g_{ k}\left( ta_{k,n}  \right)\right|.
	\end{align*}
\end{proof}

\begin{remark} \rm Denote $k_n(t) = \frac{\phi_n(t)-1}{{\rm i}t}$. Then
	$$k_n(t) =1 + g_n(t) + \big(\phi_n(t)-1\big) \frac{{\rm e}^{{\rm i}t}-1-{\rm i}t}{{\rm i}t({\rm e}^{{\rm i}t}-1)}.$$
	Thus, assuming that $\displaystyle\lim_{t \to 0}\sup_n |\phi_n(t)-1|=0$, (i) and (ii) of Theorem \ref{generalres} are verified for $k_n$ iff they hold for $g_n$ (concerning (ii) for $k_n$, the constants $c_{1,n}$ remain unaltered, while the $c_{2,n}$ are replaced by $c_{2,n}+1$).
	
	\smallskip
	\noindent
	Notice also that in our setting the limits $\displaystyle\lim_{t \to 0}g_n(t)$ and $\displaystyle\lim_{t \to 0}k_n(t)$ do not exist, which is in accordance with the fact that all the random variables considered in the present paper have infinite mean.
\end{remark}

\subsection{Applications}

\begin{corollary}  \label{Cor1}\sl
Let $U_n$ be the random variables defined in Section 3 such that their  distribution functions $F_n$ satisfy conditions (i)  %\eqref{Fcon1}
and (ii)  with  $\alpha_n \to \alpha $ as $n\to \infty$  and let $\{ a_{k,n} \}_{n \geq 1, \atop k \leq n}$ be an array of positive numbers satisfying the assumptions of Theorem \ref{generalres}. %\eqref{uniformintegrability}  
Then for $Y_n = U_n^{-1}$  and
$$V_n:= \sum_{k=1}^n a_{k,n}Y_k  -  \left(\kappa+\sum_{k=1}^n a_{k,n}(c_{F_k}-1) \right)   + \sum_{k=1}^n \alpha_k a_{k,n}\log a_{k,n}, $$ we have
	$$V_n\mathop {\longrightarrow }^\mathcal{L} \mu,   \qquad n \to \infty  ,$$ where $\mu$ is the probability law on $[0,1]$ determined by the characteristic function	
$$\xi(t)= \exp\left(-{\rm i}t\alpha \kappa \log|t| - \frac{\pi \alpha \kappa |t|}{2}\right).$$
\end{corollary}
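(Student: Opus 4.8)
The plan is to obtain Corollary~\ref{Cor1} as a direct application of Theorem~\ref{generalres} to the independent sequence $Z_n:=Y_n=U_n^{-1}$. Here $\phi_n=\psi_n=1+A_n(t)+{\rm i}B_n(t)$, with $A_n,B_n$ as in \eqref{defAB}, so that
$$g_n(t)=\frac{\psi_n(t)-{\rm e}^{{\rm i}t}}{{\rm e}^{{\rm i}t}-1}=-1+\frac{A_n(t)+{\rm i}B_n(t)}{{\rm e}^{{\rm i}t}-1}.$$
Thus everything reduces to checking hypotheses (i)--(ii) of Theorem~\ref{generalres} and identifying the constants; I expect the correct choice to be $c_{1,n}=\alpha_n$ and $c_{2,n}=c_{F_n}-1$, which is exactly what makes the centering in $V_n$ here agree with the one in Theorem~\ref{generalres}.

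\noindent\emph{Hypothesis \eqref{as1}.} By Lemma~\ref{bounds}(b) we have $|A_n(t)|\le C|t|$ uniformly in $n$ for $|t|<1$, and by Corollary~\ref{general1} we have $|B_n(t)|\le C|t|\,|\log|t||$ uniformly in $n$ for $|t|$ small. Since $|{\rm e}^{{\rm i}t}-1|$ is comparable to $|t|$ near $0$, the displayed formula for $g_n$ gives $\sup_n|g_n(t)|\le C\,|\log|t||$ for small $|t|$, so \eqref{as1} holds with $\eta=1$.

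\noindent\emph{Hypothesis \eqref{as2}.} This is the heart of the proof. Since $A_n(t)+{\rm i}B_n(t)\to 0$ uniformly in $n$ and $\frac{1}{{\rm e}^{{\rm i}t}-1}-\frac{1}{{\rm i}t}$ stays bounded, one may replace $\frac{A_n(t)+{\rm i}B_n(t)}{{\rm e}^{{\rm i}t}-1}$ by $\frac{A_n(t)+{\rm i}B_n(t)}{{\rm i}t}=\frac{B_n(t)}{t}-{\rm i}\,\frac{A_n(t)}{t}$, up to an error vanishing uniformly in $n$; similarly $\log(1-{\rm e}^{{\rm i}t})=\log|t|-{\rm i}\tfrac{\pi}{2}\,{\rm sign}\,t+o(1)$, uniformly, and $\alpha_n$ is bounded. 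Hence, taking $c_{1,n}=\alpha_n$ and separating real and imaginary parts, the quantity in \eqref{as2} will tend to $0$ uniformly provided that, as $t\to 0$ and uniformly in $n$,
$$\frac{B_n(t)}{t}+\alpha_n\log|t|\to c_{F_n}\qquad\text{and}\qquad \frac{A_n(t)}{t}\to-\frac{\pi\alpha_n}{2}.$$
The first of these is precisely Proposition~\ref{integralconv}, and it is what forces $c_{2,n}=c_{F_n}-1$. The second (the step I expect to be the main obstacle, since it is the only input not already recorded in the excerpt) I would prove as in the proof of Lemma~\ref{bounds}(b): integration by parts gives $\frac{A_n(t)}{t}=\frac{\cos t-1}{t}-\int_0^1\frac{F_n(u)}{u^2}\sin\frac{t}{u}\,{\rm d}u$, and writing $\frac{F_n(u)}{u^2}=\frac{\alpha_n}{u}+\frac{1}{u}\bigl(\frac{F_n(u)}{u}-\alpha_n\bigr)$, the substitution $v=t/u$ turns the first part into $\alpha_n\int_t^\infty\frac{\sin v}{v}\,{\rm d}v\to\alpha_n\frac{\pi}{2}$, while the remainder is estimated uniformly in $n$ by splitting $\int_0^1=\int_0^{\sqrt t}+\int_{\sqrt t}^1$, bounding the first piece by the uniform integrability in condition (ii) and the second by $|\sin(t/u)|\le t/u$ together with the uniform bound on $\int_0^1\frac1u\bigl|\frac{F_n(u)}{u}-\alpha_n\bigr|\,{\rm d}u$ coming from Lemma~\ref{bounds}(a) and condition (i). The case $t<0$ follows from the symmetries $A_n(-t)=A_n(t)$, $B_n(-t)=-B_n(t)$.

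\noindent\emph{Conclusion.} With $c_{1,k}=\alpha_k$ and $c_{2,k}=c_{F_k}-1$, the variable $V_n$ of Theorem~\ref{generalres} coincides with the one in the statement, since $\sum_{k=1}^n a_{k,n}c_{1,k}\log a_{k,n}=\sum_{k=1}^n\alpha_k a_{k,n}\log a_{k,n}$ and $\sum_{k=1}^n a_{k,n}c_{2,k}=\sum_{k=1}^n a_{k,n}(c_{F_k}-1)$. Because $\alpha_k\to\alpha$, $\sum_{k=1}^n a_{k,n}\to\kappa$ by \eqref{equazione1} and $m_n\to 0$ by \eqref{equazione2}, a Toeplitz-type estimate (split at a large index $K$ beyond which $|\alpha_k-\alpha|<\varepsilon$, and use $a_{k,n}\le m_n$ on the initial block) gives $\sum_{k=1}^n a_{k,n}c_{1,k}=\sum_{k=1}^n\alpha_k a_{k,n}\to\alpha\kappa$, so \eqref{equazione4} holds with $\ell=\alpha\kappa$. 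Finally $c_{1,n}=\alpha_n\to\alpha$, so the ``in particular'' part of Theorem~\ref{generalres} applies with $c_1=\alpha$ and yields $V_n\mathop{\longrightarrow}^\mathcal{L}\mu$ with characteristic function $\xi(t)=\exp\bigl(-{\rm i}t\alpha\kappa\log|t|-\tfrac{\pi\alpha\kappa|t|}{2}\bigr)$, as claimed.
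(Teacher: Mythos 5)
Your proposal is correct and follows essentially the same route as the paper: Theorem \ref{generalres} is applied with $c_{1,n}=\alpha_n$ and $c_{2,n}=c_{F_n}-1$, \eqref{as1} is checked via Lemma \ref{bounds} and Corollary \ref{general1}, and \eqref{as2} is reduced to the two uniform limits $B_n(t)/t+\alpha_n\log|t|\to c_{F_n}$ (Proposition \ref{integralconv}) and $A_n(t)/t\to-\pi\alpha_n/2$, exactly as in the paper. Your only deviation is presentational but welcome: replacing $({\rm e}^{{\rm i}t}-1)^{-1}$ by $({\rm i}t)^{-1}$ at the outset (legitimate, since the difference is bounded and $A_n+{\rm i}B_n\to 0$ uniformly) bypasses the paper's explicit real/imaginary decomposition with $4\sin^2(t/2)$ denominators and the auxiliary limit $(1-S(t))\log t\to 0$, while your $\sqrt{t}$-splitting spells out the uniformity of $\int_0^1 u^{-1}\varepsilon_n(u)\sin(t/u)\,{\rm d}u\to 0$ that the paper asserts more tersely.
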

\begin{proof}
The result will be obtained by applying Theorem \ref{generalres} for $Z_n =Y_n$. In this case, $\phi_n(t) = \phi_{Y_n} (t)= \psi_n(t)$, where $\psi_n (t) $ is defined as in Section 3. We will prove that both assumptions of Theorem \ref{generalres} are satisfied.

\noindent Assumption \eqref{as1} will be proven for $\eta =1$. It is sufficent to prove that
$$\limsup_{t \to 0}\sup_n\frac{1}{|\log t|}\cdot \left|\frac{ A_n(t) + {\rm i}B_n(t) }{{\rm e}^{{\rm i}t} -1 }\right|< \infty,$$
where $A_n(t)$ and $B_n(t)$ are defined in \eqref{defAB} (or \eqref{formaalternativa}). 	We have
\begin{align*}&
\left|\frac{ A_n(t) + {\rm i}B_n(t) }{{\rm e}^{{\rm i}t} -1 }\right|^2 = \frac{A_n^2(t)+ B_n^2(t)}{2 (1 -\cos t)}\sim  \frac{A_n^2(t)+ B_n^2(t)}{t^2}\\ 
&= \left(\int_t^{+\infty} \frac{1-\cos v}{v^2} {\rm d}F_n\left(\frac{t}{v}\right)\right)^2 + \left(\frac{1}{t}\int_0^{1} \sin \left(\frac{t}{u}\right){\rm d}F_n(u)\right)^2 \leq \frac{1}{ 4} + \left(\frac{1}{t}\int_0^{1} \sin \left(\frac{t}{u}\right){\rm d}F_n(u)\right)^2
\\
& \sim  \frac{1}{4}+  \alpha_n ^2 \log^2 t \sim \alpha^2 \log^2 t,\quad \mbox{as}\quad t \to 0,
\end{align*}
where the first equivalence follows from Corollary \ref{general1}.	

\medskip

\noindent Assumption \eqref{as2} will be verified for $c_ { 1,n} =    \alpha_n $ and $c_ { 2,n} =c_{F_n}-1$.  Observe that, 
$$\lim_{t \to 0}\left(\log (1 - {\rm e}^{{\rm i}t}) - \log (-{\rm i}t)\right)=0.$$
This is true since for small $t$,
$$\log (1 - {\rm e}^{{\rm i}t}) - \log (-{\rm i}t) =\log \left(\frac{1 - {\rm e}^{{\rm i}t}}{-{\rm i}t}\right)= \log  \left(\frac{  {\rm e}^{{\rm i}t}-1}{ {\rm i}t}\right)= \log \left(1+ \frac{{\rm e}^{{\rm i}t}- 1 -{\rm i}t }{{\rm i}t}\right)$$
and
$$\lim_{t \to 0}\frac{{\rm e}^{{\rm i}t}-1 -{\rm i}t }{{\rm i}t} = \lim_{t \to 0} \frac{O(t^2)}{{\rm i}t}=0.$$
Then, for  every $n$  we have that
\begin{align*}
&
g_ n (t) +   \alpha_ n \log (1- e^{it})=  -1 +\frac{  A_n(t)+i B_n(t) }{e^{it}-1} +  \alpha_n \log (1- e^{it})\approx  -1 +\frac{  A_n(t)+i B_n(t) }{e^{it}-1} +\alpha_n \log (-it)\\
&=
-1 +\frac{  A_n(t)+i B_n(t) }{e^{it}-1} +  \alpha _n \log (|t|)-i\alpha_n  \frac{\pi}{2}{\rm sign\, t}.
\end{align*}  
We assume first that $t>0$, so the latter expression becomes
$$  -1 +\frac{  A_n(t)+i B_n(t) }{e^{it}-1} +  \alpha_n \log (t)-i\alpha_n \frac{\pi}{2}.$$
The second term can be equivalently written as
\begin{align*}&
\frac{  A_n(t)+{\rm i} B_n(t) }{e^{{\rm i} t}-1}= \frac{( A_n(t)+{\rm i}  B_n(t))(e^{-{\rm i} t }-1)}{ 4 \sin^2 \frac{t}{2}}=  \frac{( A_n(t)+{\rm i}  B_n(t))((\cos t-1)- {\rm i} \sin t)}{ 4 \sin^2 \frac{t}{2}}\\&=\frac{A_n(t)(\cos t -1)+ B_n(t) \sin t }{4 \sin^2 \frac{t}{2}}+ {\rm i}  \,\,\frac{B_n(t)(\cos t -1)-A_n(t) \sin t}{4 \sin^2 \frac{t}{2}}\\&=: E_n(t) +{\rm i}  F_n(t)
\end{align*}
and therefore
\[
g_n(t) +  \alpha_n  \log (1- e^{{\rm i}t}) \approx(-1+E_n(t)+\alpha_n \log t)+{\rm i}\left(F_n(t)-\alpha_n  \frac{\pi}{2}\right).
\]
For the first term we have that
\[
-1+E_n(t)+\alpha_n \log t = -1-\frac{A_n(t)}{2}+\frac{\cos \frac{t}{2}}{2}\frac{B_n(t)}{\sin \frac{t}{2}} +\alpha_n \log t.
\]
It can easily be derived that $A_n(t) \to 0$  as $t\to 0$  by Lemma \ref{bounds}(b).
Next we study the convergence of 
$$\lim_{t \to 0} \left(\frac{\cos\frac{t}{2}}{2}\cdot \frac{B_n(t)}{\sin \frac{t}{2}}+ \alpha_n \log t\right)=
\lim_{t \to 0}  \left( \cos \frac{t}{2}\cdot \frac{\frac{t}{2}}{\sin \frac{t}{2}}\cdot \frac{1}{t}\int_0^{1} \left(\sin\frac{t}{u}\right){\rm d}F_n(u) + \alpha_n \log t\right).$$
Denote
$$S(t) =\cos \frac{t}{2}\cdot \frac{\frac{t}{2}}{\sin \frac{t}{2}}$$
and write the preceding expression as
$$S(t) \left(\frac{1}{t}\int_0^{1} \left(\sin\frac{t}{u}\right){\rm d}F_n(u) +  \alpha_n\log t\right)+  \alpha_n(1-S(t)) \log t.$$
Obviously $S(t) \to 1$ as $t \to 0$ and, from Proposition \ref{integralconv} we get
$$\lim_{t \to 0} S(t) \left(\frac{1}{t}\int_0^{1} \left(\sin\frac{t}{u}\right){\rm d}F_n(u) + \alpha_n \log t\right) =c_{F_n},$$
uniformly in  $n$. Now  we calculate the
$$\lim_{t \to 0}(1-S(t))\log t.$$
We start by writing
$$(1-S(t))\log t =\left(\log 2 + \log \frac{t}{2}\right)(1-S(t)),$$
so, putting $x= \frac{t}{2}$, it suffices to calculate  the limit 
$$\lim_{x \to 0} \log x\left( 1 - \frac{x \cos x}{\sin x}\right).$$
It can easily be proven that
$$\log x\left( 1 - \frac{x \cos x}{\sin x}\right)\sim\log x \frac{x - x \cos x}{\sin x}= \log x \cdot \frac{x - \frac{x^3}{3}- x(1 - \frac{x^2}{2})+ o (x^3)}{x + o(x^3)}= \log x \cdot \frac{\frac{x^2}{6}+ o(x^2)}{1 + \frac{o(x^3)}{x}}, $$
so
$$\lim_{x \to 0} \log x\left( 1 - \frac{x \cos x}{\sin x}\right) =0.$$

\noindent Summarizing, we have obtained that
$$\lim_{t \to 0} (-1 + E_n(t) +\alpha_n \log t)= c_{F_n}-1,$$
uniformly in $n$.   For the calculation of 
$$\lim_{t \to 0}\left(F_n(t) - \alpha_n\frac{\pi}{2}\right),$$
 notice that
\begin{align*}
&F_n(t) - \alpha_n\frac{\pi}{2}= \frac{B_n(t)(\cos t -1)- A_n(t) \sin t}{4 \sin^2 \frac{t}{2}} - \alpha_n \frac{\pi}{2}= \frac{B_n(t)(\cos t -1) }{4 \sin^2 \frac{t}{2}} - \frac{  A_n(t) \sin t}{4 \sin^2 \frac{t}{2}}  - \alpha_n \frac{\pi}{2}\\ 
&=-\frac{1}{2} B_n(t)- \frac{1}{2}
\cdot \frac{A_n(t) \cos \frac{t}{2}}{\sin \frac{t}{2}}- \alpha_n \frac{\pi}{2}.
\end{align*}
Starting with $$\lim_{t \to 0} \int_0^1 \sin \frac{t}{u}\, {\rm d} F_n(u)=0$$
and integrating by parts we have
\begin{align*}&
\int_0^1 \sin \frac{t}{u}\, {\rm d} F_n(u)= F_n(u) \sin \frac{t}{u}\Big|_0^1 + t \int_0^1 \frac{F_n(u)}{u^2}\cdot \cos \frac{t}{u} \, {\rm d}u \\&=\sin t - \lim_{u \to 0}F_n (u) \sin \frac{t}{u}+ t \int_0^1 \frac{F_n(u)}{u^2}\cdot \cos \frac{t}{u} \, {\rm d}u.
\end{align*}	
The second summand is equal to
$$t\lim_{u \to 0}  \frac{ F_n(u)}{u}  \cdot\frac{ \sin \frac{t}{u}}{ \frac{t}{u}}=0,$$
uniformly in  $n$   by assumption (ii)%\eqref{Fcon1}
. For  the functions
$$  u \mapsto\frac{F_n(u)}{u^2}\cdot \cos \frac{t}{u}  $$
we write 
\begin{align*}&
\int_0^1 \frac{F_nu)}{u^2}\cdot \cos \frac{t}{u} \, {\rm d}u= \int_0^1 \frac{1}{u}\left(\frac{F_n(u)}{u}-\alpha _n\right)\cos \frac{t}{u}\, {\rm d}u + \alpha _n\int_0^1\frac{1}{u}\cos \frac{t}{u}\, {\rm d}u.
\end{align*}	
The first integral is clearly  finite, uniformly in $n$, by assumption \eqref{uniformintegrability}; hence
$$\lim_{t \to 0}t \int_0^1 \frac{1}{u}\left(\frac{F_n(u)}{u}-\alpha _n\right)\cos \frac{t}{u}\, {\rm d}u=0,$$
uniformly in  $n$. Thus it remains to prove that
$$\lim_{t \to 0}t\int_0^1\frac{1}{u}\cos \frac{t}{u}\, {\rm d}u=0.$$ 
By the change of variable $\frac{t}{u}=z$, we have
\begin{align*}&
t\int_0^1\frac{1}{u}\cos \frac{t}{u}\, {\rm d}u= t\int_t^\infty \frac{\cos z}{z}\, {\rm d}z=- t{\rm Ci}(t) = -\gamma t- t\log t -t\int_0^t\frac{1-\cos z}{z}{\rm d}z,
\end{align*}
(recall that ${\rm Ci}(x)$ is the {\it cosine integral function}). It is easily seen that the last integral above is finite, so the conclusion is that the requested limit is equal to 0.

\medskip

\noindent Next, we calculate
$$\lim_{t \to 0}\left( - \frac{1}{2}
\cdot \frac{A_n(t) \cos \frac{t}{2}}{\sin \frac{t}{2}}- \alpha_n \frac{\pi}{2}\right).$$
 Observe that 
\[
- \frac{1}{2}
\cdot \frac{A_n(t) \cos \frac{t}{2}}{\sin \frac{t}{2}} = \cos \frac{t }{2} \cdot \frac{\frac{t}{2}}{\sin \frac{t}{2}}\cdot \int_t ^\infty \frac{1-\cos v}{v^2} \, {\rm d}F_n\left(\frac{t}{v}\right)= S(t)  \int_t ^\infty \frac{1-\cos v}{v^2} \, {\rm d}F_n\left(\frac{t}{v}\right)
\]
 and for the calculation of the integral we have 
\begin{align*}
&\int_t^{+\infty}\frac{1-\cos v}{v^2}{\rm d}F_n\left(\frac{t}{v}\right) = \frac{1}{t}\int_0^{1}\left(1 - \cos \frac{t}{u} \right){\rm d}F_n(u)\\
& = \frac{1}{t}\left[\left(1 - \cos \frac{t}{u} \right)F_n(u)\right]_{0}^{1}-\frac{1}{t}\int_0^{1}F_n(u){\rm d}\left(1 - \cos \frac{t}{u} \right)\\
& = \frac{F_n(1)(1-\cos t)}{t}-\frac{1}{t}\lim_{u\to0}F_n(u)\left(1 - \cos \frac{t}{u} \right)-\frac{1}{t}\int_{0}^{1}F_n(u)\left(\sin \frac{t}{u}\right)\left(-\frac{t}{u^2}\right)\,{\rm d}u\\
&= \frac{1-\cos t}{t}-\lim_{u\to 0}\left( \frac{F_n(u)}{u}\cdot \frac{1-\cos \frac{t}{u}}{\frac{t}{u}}\right)+\int_0^{1}\frac{F_n(u)}{u^2}\left(\sin \frac{t}{u}\right)\,{\rm d}u.
\end{align*}	
Note that the first two terms converge to 0 and therefore we only need to study the convergence of the last term. We define
\[
\epsilon_n(u) = \frac{F_n(u)}{u}-\alpha_n
\]
and now we have that
\begin{align*}
&\int_0^{1}\frac{F_n(u)}{u^2}\left(\sin \frac{t}{u}\right)\,{\rm d}u = \int_{0}^{1} \frac{\epsilon_n(u)}{u}\sin \frac{t}{u}\,{\rm d}u +\alpha_n \int_{0}^{1}\dfrac{1}{u}\sin \dfrac{t}{u}\,{\rm d}u\\
&=\int_{0}^{1} \frac{\epsilon_n(u)}{u}\sin \frac{t}{u}\,{\rm d}u + \alpha_n\int_{t}^{+\infty}\frac{\sin x}{x}\,{\rm d}x\\
&\to  \alpha_n \frac{\pi}{2}\quad \mbox{ as }\quad t\to0^+,
\end{align*}	
uniformly in  $n$  since $\int_{0}^{1} \frac{\epsilon_n(u)}{u}\sin \frac{t}{u}\,{\rm d}u$ tends to 0 uniformly in  $n$.	
Thus 
$$ - \frac{1}{2}
\cdot \frac{A_n(t) \cos \frac{t}{2}}{\sin \frac{t}{2}}  - \alpha_n \frac{\pi}{2} \to 0, \qquad t \to 0$$
uniformly in  $n$.

\noindent
Now for $t<0.$ In this case we write
\begin{align*}&g_n(t) + \alpha_n \log (1 - {\rm e}^{{\rm i}t})= -1 +\frac{  A_n(t)+{\rm i} B_n(t) }{e^{{\rm i}t}-1} +  \alpha _n \log (1- e^{{\rm i}t})= -1 + \frac{  A_n(|t|)-{\rm i} B_n(|t|) }{e^{-{\rm i}|t|}-1} +  \alpha _n \log (1- e^{{\rm i}t})\\
&\approx   -1 + \frac{  A_n (|t|)-{\rm i} B_n(|t|) }{e^{-{\rm i}|t|}-1} +  \alpha_n \log ( -  {\rm i}t)= \overline{ -1 + \frac{  A_ n(|t|)+{\rm i} B_n(|t|) }{e^{{\rm i}|t|}-1}}+ \alpha_n\log (|t|)+ {\rm i}\alpha _n  \arg (-{\rm i}t)\\&=  \overline{ -1 + \frac{  A_n(|t|)+{\rm i} B_n(|t|) }{e^{{\rm i}|t|}-1}}+\alpha_n \log (|t|)+ {\rm i}\alpha _n \frac{\pi}{2}= \overline{-1 + \frac{  A_ n (|t|)+{\rm i} B_ n (|t|) }{e^{{\rm i}|t|}-1}+\alpha _n  \log (|t|)- {\rm i}\alpha _n  \frac{\pi}{2}},
\end{align*}
which converges to 0 by the first part of this calculation (case $t>0$).	
\end{proof}	
	
\begin{corollary}\label{Cor2} \sl
	Let $\{Z_n\}_{ n \geq 1}$ be independent random variables with density
	\[
	P(Z_n = k) = p_{n,k-1}-p_{n,k}\quad\mbox{for}\quad k=2,3,\ldots
	\]
	for $\displaystyle p_{n,k} = \frac{1-\beta_n}{k-\beta_n}$,  where $\{\beta_n\}_{n \geq 1}\in [0,1)$ is a sequence bounded by a constant $c<1$; let ${ \{  a_{k,n}  \}_{n \geq 1, \atop k \leq n}}$be an array of positive numbers satisfying the assumptions of Theorem \ref{generalres}.  We assume in addition that $$ \lim_{n\to\infty} \sum_{k=1}^na_{k,n}(1-\beta_k) \mbox{ exists} :=\ell.$$ 
	Let  
	 $$V_n:= \sum_{k=1}^n a_{k,n}Z_k  -\left(\kappa +\sum_{k = 1}^{n}a_{n,k}(1-\beta_k)\int_0^1 \frac{1- x^{\beta_{k}}}{x^{\beta_{k}} (1-x)}{\rm d}x \right)  +\sum_{k=1}^na_{k,n}(1-\beta_k)\log a_{k,n}.$$   
	Then,
	$$V_n\mathop {\longrightarrow }^\mathcal{L} \mu,   \qquad n \to \infty  ,$$
	where $\mu$ is the probability law on $[0,1]$ determined by the characteristic function$$\xi(t) =\exp\left(-{\rm i}t\ell \log|t| - \frac{\pi\ell |t|}{2}\right).$$
	
	%where $\mu$ is the probability law on $[0,1]$ determined by the characteristic function
	%$$\xi(t) =\exp \Big(-\frac{1}{2}\pi\kappa (1-\beta)|t|-{\rm i}t (1-\beta)\kappa\log|t| \Big).$$	
\end{corollary}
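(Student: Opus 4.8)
The plan is to apply Theorem~\ref{generalres} to the sequence $Z_n$ with the choice
$$c_{1,n}=1-\beta_n,\qquad c_{2,n}=(1-\beta_n)\int_0^1\frac{1-x^{\beta_n}}{x^{\beta_n}(1-x)}\,{\rm d}x,$$
so that the random variable $V_n$ and the limiting law $\mu$ of the statement coincide exactly with those produced by that theorem, with $\ell=\lim_n\sum_{k=1}^na_{k,n}(1-\beta_k)$, which is precisely the quantity appearing in \eqref{equazione4}. The array hypotheses \eqref{equazione1}, \eqref{equazione2} and \eqref{equazione4} have all been assumed, so the whole matter reduces to verifying conditions (i) and (ii) of Theorem~\ref{generalres} for the functions $g_n$.

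The first step is to obtain a workable expression for $g_n$. Since $p_{n,1}=\tfrac{1-\beta_n}{1-\beta_n}=1$, reindexing and telescoping in $\phi_n(t)=\sum_{k\geq2}{\rm e}^{{\rm i}tk}(p_{n,k-1}-p_{n,k})$ gives $\phi_n(t)-{\rm e}^{{\rm i}t}=({\rm e}^{{\rm i}t}-1)\sum_{j\geq1}{\rm e}^{{\rm i}tj}p_{n,j}$, whence
$$g_n(t)=\sum_{j\geq1}{\rm e}^{{\rm i}tj}p_{n,j}=(1-\beta_n)\sum_{j\geq1}\frac{{\rm e}^{{\rm i}tj}}{j-\beta_n},$$
the series converging for $t\neq0$ by Dirichlet's test; in accordance with the Remark following Theorem~\ref{generalres}, $g_n$ has no limit at $0$. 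Subtracting the boundary expansion $-\log(1-{\rm e}^{{\rm i}t})=\sum_{j\geq1}{\rm e}^{{\rm i}tj}/j$ (the same principal branch used in the proof of Theorem~\ref{generalres}) and using that the difference of two convergent series is the series of differences, one gets
$$g_n(t)+(1-\beta_n)\log(1-{\rm e}^{{\rm i}t})=(1-\beta_n)\sum_{j\geq1}{\rm e}^{{\rm i}tj}\,\frac{\beta_n}{j(j-\beta_n)}.$$

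From this identity both conditions follow. For (i) with $\eta=1$: the series on the right is bounded by $\sum_{j\geq1}\tfrac{c}{j(j-c)}<\infty$ uniformly in $n$ (this is where $\sup_n\beta_n=c<1$ is used, since then $\tfrac{\beta_n}{j(j-\beta_n)}\leq\tfrac{c}{j(j-c)}$), while $|\log(1-{\rm e}^{{\rm i}t})|\sim|\log|t||$ as $t\to0$ because $|1-{\rm e}^{{\rm i}t}|=2|\sin(t/2)|$ and the argument stays bounded; hence $\sup_n|g_n(t)|\leq|\log|t||+C$ near $0$. For (ii): the left-hand side above, minus $c_{2,n}$, equals $(1-\beta_n)\sum_{j\geq1}({\rm e}^{{\rm i}tj}-1)\tfrac{\beta_n}{j(j-\beta_n)}$ provided $c_{2,n}=(1-\beta_n)\sum_{j\geq1}\tfrac{\beta_n}{j(j-\beta_n)}$; splitting this series at a large index $N$, the tail is dominated by $\sum_{j>N}\tfrac{c}{j(j-c)}$ (small, uniformly in $n$) and the head is $O(|t|\log N)$ uniformly in $n$ because $|{\rm e}^{{\rm i}tj}-1|\leq|t|j$, so it tends to $0$ uniformly in $n$. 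Finally, expanding $\tfrac1{1-x}=\sum_{m\geq0}x^m$ and integrating term by term (legitimate, the terms $x^{m-\beta_n}-x^m$ being nonnegative on $(0,1)$) gives $\int_0^1\tfrac{1-x^{\beta_n}}{x^{\beta_n}(1-x)}\,{\rm d}x=\sum_{j\geq1}\bigl(\tfrac1{j-\beta_n}-\tfrac1j\bigr)=\sum_{j\geq1}\tfrac{\beta_n}{j(j-\beta_n)}$, so indeed $c_{2,n}$ is the claimed integral and \eqref{as2} holds with the stated $c_{1,n},c_{2,n}$.

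With (i) and (ii) established, Theorem~\ref{generalres} applies and yields $V_n\mathop{\longrightarrow}^{\mathcal{L}}\mu$ with characteristic function $\xi(t)=\exp\bigl(-\tfrac{\pi}{2}\ell|t|-{\rm i}t\ell\log|t|\bigr)$, which is exactly the asserted conclusion. The only genuine difficulty is the uniformity in $n$ in step (ii) (and the companion bound in (i)); both rest on the hypothesis $\sup_n\beta_n=c<1$, which supplies the $n$-independent summable majorant $\tfrac{c}{j(j-c)}$. Identifying the resulting constant $\sum_j\tfrac{\beta_n}{j(j-\beta_n)}$ with the integral in the statement (equivalently, with $-\gamma-\psi(1-\beta_n)$, $\psi$ the digamma function) is the only remaining computation, and it is elementary.
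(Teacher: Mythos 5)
Your proposal is correct, and the overall strategy is the same as the paper's (apply Theorem \ref{generalres} with $c_{1,n}=1-\beta_n$ and $c_{2,n}=(1-\beta_n)\int_0^1\frac{1-\xi^{\beta_n}}{\xi^{\beta_n}(1-\xi)}\,{\rm d}\xi$, so that \eqref{equazione4} is exactly the extra hypothesis $\lim_n\sum_k a_{k,n}(1-\beta_k)=\ell$), but the technical core — the verification of \eqref{as1} and \eqref{as2} — is carried out by a genuinely different route. The paper converts $g_n(t)=h_n({\rm e}^{{\rm i}t})=\sum_j {\rm e}^{{\rm i}tj}p_{n,j}$ into the Gauss hypergeometric integral $z(1-\beta_n)\int_0^1 \xi^{-\beta_n}(1-\xi z)^{-1}{\rm d}\xi$ (formulas 15.1.1 and 15.3.1 of \cite{AS}), obtains the key identity $g_n(t)+c_{1,n}\log(1-{\rm e}^{{\rm i}t})-c_{2,n}=(1-\beta_n)({\rm e}^{{\rm i}t}-1)\int_0^1\frac{1-\xi^{\beta_n}}{\xi^{\beta_n}(1-\xi{\rm e}^{{\rm i}t})(1-\xi)}{\rm d}\xi$ via 15.1.3 of \cite{AS}, and controls everything through the majorant $\xi^{-c}$ and a Whittaker--Watson asymptotic for condition (i). You instead stay at the level of series: subtracting the boundary expansion $-\log(1-{\rm e}^{{\rm i}t})=\sum_j{\rm e}^{{\rm i}tj}/j$ termwise yields the \emph{absolutely} convergent error series $(1-\beta_n)\sum_j{\rm e}^{{\rm i}tj}\frac{\beta_n}{j(j-\beta_n)}$, dominated uniformly in $n$ by $\sum_j\frac{c}{j(j-c)}$, after which (i) is immediate and (ii) follows from a standard head/tail split; the integral form of $c_{2,n}$ is then recovered by expanding $(1-x)^{-1}$ and integrating term by term. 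Your version is more elementary and self-contained (no special-function identities and no external asymptotic), at the mild cost of needing to cite the boundary Abel/Dirichlet facts for $\sum_j {\rm e}^{{\rm i}tj}/j$ and $\sum_j{\rm e}^{{\rm i}tj}/(j-\beta_n)$, and of the extra (easy) computation identifying $\sum_j\frac{\beta_n}{j(j-\beta_n)}$ with the integral in the statement, which the paper's integral representation delivers for free.
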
	
\begin{proof}
	For the characteristic function  of $Z_n$ we have that
	\begin{align*}&
	\phi_n(t)=\phi_{Z_n}(t)= E[e^{{\rm i} t Z_n}]= \sum_{k=2}^\infty e^{{\rm i}tk} (p_{ n,k-1}- p_{ n,k})   \\= &
	\sum_{k=2}^\infty  e^{{\rm i}tk}p_{ n,k-1} - \sum_{k=2}^\infty e^{{\rm i}tk} p_{ n,k}=e^{{\rm i}t}\sum_{k=2}^\infty  e^{{\rm i}t(k-1)}p_{ n,k-1}- \sum_{k=2}^\infty e^{{\rm i}tk} p_{ n,k}\\&=e^{{\rm i}t} \sum_{k=1}^\infty e^{{\rm i}tk} p_{ n,k}- \sum_{k=1}^\infty e^{{\rm i}tk} p_{ n,k} +e^{{\rm i}t}%\\& 
	=
	(e^{{\rm i}t}-1)  \sum_{k=1}^\infty e^{{\rm i}tk} p_{ n,k} + e^{{\rm i}t} .
	\end{align*}
	Hence, setting
	$$h_{n}(z) = \sum_{k=1}^\infty z^{ k} p_{ n,k}, \qquad z \in \mathbb{C},$$
	which is the generating function of the sequence $ \{p_{n,k}\}_{k\geq 1}$, we obtain
	\begin{align*}&
	\phi_{n}(t)= e^{{\rm i}t}   +(e^{{\rm i}t}-1) h_{n}(e^{{\rm i}t})  \\&
	\end{align*}
	and $$g_n(t)=\frac{\phi_{n}(t)-{\rm e}^{{\rm i}t } }{{\rm e}^{{\rm i}t }-1}=    h_{ n}({\rm e}^{{\rm i}t}) =  \sum_{k=1}^\infty  {\rm e}^{{\rm i}t k} p_{ n,k}.$$
	 Note that $h_n(z)$ can be written as 
	$$h_{n}(z) =(1-\beta_{n}) \sum_{k=1}^\infty z^{ k}  \frac{1}{  k- \beta_{n}}, \qquad z \in \mathbb{C} . $$
	To this extent, notice that we can write
	$$\frac{1-\beta_{n}}{  k-\beta_{ n}} = \frac{\Gamma(2-\beta_{ n})}{\Gamma (1) \Gamma (1-\beta_{ n})}  \cdot  \frac{\Gamma(k )\Gamma(k-\beta_{ n} )}{\Gamma(k+1-\beta_{ n} )(k-1)!} ,$$
	so that
	\begin{align*}&
	\sum_{k=1}^\infty p_{ n, k} z^k =  \frac{\Gamma(2-\beta_n)}{\Gamma (1) \Gamma (1-\beta_{n})}  \sum_{k=1}^\infty \frac{\Gamma(k )\Gamma(k-\beta_{n} )}{\Gamma(k+1-\beta_{n} )(k-1)!}z^k\\&=  z \cdot  \frac{\Gamma(2-\beta_{n})}{\Gamma (1) \Gamma (1-\beta_{n})}  \sum_{k=1}^\infty \frac{\Gamma(k )\Gamma(k-\beta_{n})}{\Gamma(k+1-\beta_{n})(k-1)!}z^{k-1}= 
	z \cdot  \frac{\Gamma(2-\beta_{n})}{\Gamma (1) \Gamma (1-\beta_{n})}  \sum_{k=0}^\infty \frac{\Gamma(k+1 )\Gamma( k+1-\beta_{n} )}{\Gamma( k +2-\beta_{n})k!}z^{k }\\&=z\cdot _2\kern-1mm F_1(1,1-\beta_{n},2-\beta_{n};z),
	\end{align*}
	where $ _2\kern +0,3mm  F_1(1,1-\beta_{n},2-\beta_{n};z)$  is the {\it Gauss hypergeometric  series} (with parameters $1$, $1-\beta_{n}$ and $2-\beta_{n}$)
	by formula 15.1.1 of \cite{AS}.  It is known that the circle of convergence of this series is $|z|=1$ (see again  15.1.1 of \cite{AS}). 	It is also known (see formula 15.3.1 of \cite{AS}) that
	\begin{equation}\label{eq5}
	_2\kern-0,3mm F_1(1,1-\beta_{n},2-\beta_{n};z) = (1-\beta_{n})\int_0^1 \frac{1}{\xi^{\beta_{n}}(1-\xi z)}\,{\rm d }\xi
	\end{equation}
	 and therefore we have
		$$h_n(z)= z (1-\beta_n)\int_0^1  \frac{1}{\xi^{\beta_n}(1-\xi z)}\, {\rm d}\xi,\qquad z \in \mathbb{C}.$$ 
	
\noindent The statement of the Corollary follows immediately by Theorem \ref{generalres} as long as the assumptions of the theorem are confirmed. First, we prove assumption \eqref{as1} with $\eta =1$. 
Using formula \eqref{eq5} we write, for $t>0$,
\begin{align*}
\frac{g_n(t)}{\log t}=  \frac{{\rm e}^{{\rm i}t}(1-\beta_{n}) \int_0^1 \frac{1}{\xi^{\beta_{n}}(1-\xi {\rm e}^{{\rm i}t})}\,{\rm d }\xi }{\log t}=  \frac{ \int_0^1 \frac{1}{\xi^{\beta_{n}}(1-\xi {\rm e}^{{\rm i}t})}\,{\rm d }\xi}{    \log(1-{\rm e}^{{\rm i}t})   }(1-\beta_{n}){\rm e}^{{\rm i}t} \frac{\log(1-{\rm e}^{{\rm i}t})}{\log t}.
\end{align*}
Now, it is not difficult to check that, for every $n$,
$$\left| \int_0^1 \frac{1}{\xi^{\beta_n}(1-\xi {\rm e}^{{\rm i}t})}\,{\rm d }\xi\Big|\leq  \Big| \int_0^1 \frac{1}{\xi^{c}(1-\xi {\rm e}^{{\rm i}t})}\,{\rm d }\xi\right|.$$
%In fact
%\begin{align*}&
%\Big| \int_0^1 \frac{1}{\xi^{\beta_n}(1-\xi {\rm e}^{{\rm i}t})}\,{\rm d }\xi\Big|^2 = \Big|\int_0^1\frac{1- \xi \cos t}{\xi^{\beta_n} (\xi^2 -2 \xi \cos t +1)}    \,{\rm d }\xi + {\rm i}\int_0^1\frac{  \xi \sin t}{\xi^{\beta_n} (\xi^2 -2 \xi \cos t +1)}    \,{\rm d }\xi\Big|^2\\& = \Big(\int_0^1\frac{1- \xi \cos t}{\xi^{\beta_n} (\xi^2 -2 \xi \cos t +1)}    \,{\rm d }\xi\Big)^2 + \Big(\int_0^1\frac{  \xi \sin t}{\xi^{\beta_n} (\xi^2 -2 \xi \cos t +1)}    \,{\rm d }\xi\Big)^2\\& \leq \Big(\int_0^1\frac{1- \xi \cos t}{\xi^{c} (\xi^2 -2 \xi \cos t +1)}    \,{\rm d }\xi\Big)^2 + \Big(\int_0^1\frac{  \xi \sin t}{\xi^{c} (\xi^2 -2 \xi \cos t +1)}    \,{\rm d }\xi\Big)^2=\Big| \int_0^1 \frac{1}{\xi^{c}(1-\xi {\rm e}^{{\rm i}t})}\,{\rm d }\xi\Big|^2
%\end{align*}

\noindent
Thus
$$ \sup_n\left|\frac{ \int_0^1 \frac{1}{\xi^{\beta_n}(1-\xi {\rm e}^{{\rm i}t})}\,{\rm d }\xi}{ \log(1-{\rm e}^{{\rm i}t}) }\right|   \leq \left|\frac{  \int_0^1 \frac{1}{\xi^{c}(1-\xi {\rm e}^{{\rm i}t})}\,{\rm d }\xi}{ \log(1-{\rm e}^{{\rm i}t})} \right|$$
which converges to $1$ by \cite{WW1915}, ex. 18 p. 293. Moreover
\begin{align*}&
\frac{\log(1-{\rm e}^{{\rm i}t})}{\log t}= \frac{ \log(1-{\rm e}^{{\rm i}t})-\log (- {\rm i}t)}{ \log t}+ \frac{  \log (-{\rm i}t)}{\log t}=\frac{ \log(1-{\rm e}^{{\rm i}t})-\log (-{\rm i}t)}{ \log t}+\frac{1}{\log t} - {\rm i}\frac{\frac{\pi}{2}}{\log t} \to 0, \qquad t\to 0^+,
\end{align*}
by  \eqref{eq6}.

\noindent Next, we wish to prove that 
$$\lim_{t \to 0}\sup_n |h_n({\rm e}^{{\rm i}t}) + c_{1,n}\log(1-{\rm e}^{{\rm i}t})-c_{2,n}|=0,$$
with
$$c_{1,n}= 1-\beta_n; \qquad c_{2,n}= (1-\beta_n)\int_0^1 \frac{1-\xi^{\beta_n}}{\xi^{\beta_n}(1-\xi)}\, {\rm d}\xi.$$
Recalling formula 15.1.3 of \cite{AS}, we can write  
\begin{align*}&
h_n({\rm e}^{{\rm i}t})   + c_{1,n}\log(1-{\rm e}^{{\rm i}t})-c_{2,n}=   (1-\beta_n)\left( \int_0^1  \frac{{\rm e}^{{\rm i}t}}{\xi^{\beta_n}(1-\xi {\rm e}^{{\rm i}t})}\, {\rm d}\xi - \int_0^1 \frac{{\rm e}^{{\rm i}t}}{1-\xi {\rm e}^{{\rm i}t}}\, {\rm d}\xi-\int_0^1 \frac{1-\xi^{\beta_n}}{\xi^{\beta_n}(1-\xi)}\, {\rm d}\xi\right)\\&=
(1-\beta_n)\left(\int_0^1\frac{1- \xi^{\beta_n}}{\xi^{\beta_n}}\left(\frac{{\rm e}^{{\rm i}t}}{1-\xi {\rm e}^{{\rm i}t}}-\frac{1}{1-\xi}\right)\, {\rm d}\xi\right)=(1-\beta_n)({\rm e}^{{\rm i}t}-1)\int_0^1 \frac{1-\xi^{\beta_n}}{\xi^{\beta_n}(1-\xi {\rm e}^{{\rm i}t})(1-\xi)}\, {\rm d}\xi.
\end{align*}
So we have to prove that
$$\lim_{t \to 0}\sup_n \left|({\rm e}^{{\rm i}t}-1)\int_0^1 \frac{1-\xi^{\beta_n}}{\xi^{\beta_n}(1-\xi {\rm e}^{{\rm i}t})(1-\xi)}\, {\rm d}\xi \right|=0.$$
We have
\begin{align*}&
\left|({\rm e}^{{\rm i}t}-1)\int_0^1 \frac{1-\xi^{\beta_n}}{\xi^{\beta_n}(1-\xi {\rm e}^{{\rm i}t})(1-\xi)}\, {\rm d}\xi \right|= |{\rm e}^{{\rm i}t}-1| \left|\int_0^1 \frac{1-\xi^{\beta_n}}{1-\xi} \cdot\frac{1}{\xi^{\beta_n}(1-\xi {\rm e}^{{\rm i}t})}\, {\rm d}\xi\right|\\& \leq  |{\rm e}^{{\rm i}t}-1|\int_0^1\frac{1}{\xi^{\beta_n}|1-\xi {\rm e}^{{\rm i}t}|}\, {\rm d}\xi,
\end{align*}
since 
$$ 0\leq \frac{1-\xi^{\beta_n}}{1-\xi} \leq 1,$$
recalling that $\beta_n < 1$.
So it remains to prove that
$$\lim_{t \to 0 }\sup_n |{\rm e}^{{\rm i}t}-1|\int_0^1\frac{1}{\xi^{\beta_n}|1-\xi {\rm e}^{{\rm i}t}|}\, {\rm d}\xi =0.$$
First
\begin{align*}&
|1-\xi {\rm e}^{{\rm i}t}|^2= |1-\xi\cos t +{\rm i}\xi\sin t|^2 = (1-\xi\cos t)^2+\xi^2\sin t^2=\xi^2- 2 \xi\cos t +1.
\end{align*}
Hence
\begin{align*}&
|{\rm e}^{{\rm i}t}-1|\int_0^1\frac{1}{\xi^{\beta_n}|1-\xi {\rm e}^{{\rm i}t}|}\, {\rm d}\xi = \left|\frac{{\rm e}^{{\rm i}t}-1}{{\rm i}t}\right|\cdot|{\rm i}t|\int_0^1\frac{1}{\xi^{\beta_n} \sqrt{\xi^2- 2 \xi\cos t +1}}\, {\rm d}\xi \\&=\left|\frac{{\rm e}^{{\rm i}t}-1}{{\rm i}t}\right|\int_0^1\frac{1}{\xi^{\beta_n} }\cdot \sqrt{\frac{t^2}{\xi^2- 2 \xi\cos t +1}}\, {\rm d}\xi .
\end{align*}
Notice that
$$\lim_{t \to 0}\frac{t^2}{\xi^2- 2 \xi\cos t +1}= \begin{cases} 0 & {\rm for} \, 0 \leq \xi < 1\\1& {\rm for} \,  \xi = 1;
\end{cases}
$$
thus 
$$\int_0^1\lim_{t \to 0}\frac{1}{\xi^{\beta_n} }\cdot \sqrt{\frac{t^2}{\xi^2- 2 \xi\cos t +1}}\, {\rm d}\xi=0.$$
It is easy to see that
$$\xi^2- 2 \xi\cos t +1\geq \sin^2 t,\qquad  \forall t, \xi ;$$
moreover, since  $\beta_n < c < 1$ by assumption, we get 
\begin{align*}
\frac{1}{\xi^{\beta_n} }\cdot \sqrt{\frac{t^2}{\xi^2- 2 \xi\cos t +1}}  \leq \frac{1}{\xi^{c} } \left|\frac{t}{\sin t}\right|\leq \frac{C}{\xi^{c}}
\end{align*}
in a neighborhood of 0. Since the function $ \xi \mapsto \frac{C}{\xi^{c}}$ is integrable in $(0,1)$, Lebesgue Theorem assures that, for every $n$, 
$$\lim_{t \to 0}\left|\frac{{\rm e}^{{\rm i}t}-1}{{\rm i}t}\right|\int_0^1\frac{1}{\xi^{\beta_n} }\cdot \sqrt{\frac{t^2}{\xi^2- 2 \xi\cos t +1}}\, {\rm d}\xi=\int_0^1\lim_{t \to 0}\frac{1}{\xi^{\beta_n} }\cdot \sqrt{\frac{t^2}{\xi^2- 2 \xi\cos t +1}}\, {\rm d}\xi=0.$$
This convergence is uniform in $n$ since
$$\sup_n \left|\frac{{\rm e}^{{\rm i}t}-1}{{\rm i}t}\right|\int_0^1\frac{1}{\xi^{\beta_n} }\cdot \sqrt{\frac{t^2}{\xi^2- 2 \xi\cos t +1}}\, {\rm d}\xi \leq \left|\frac{{\rm e}^{{\rm i}t}-1}{{\rm i}t}\right|\int_0^1\frac{1}{\xi^{c} }\cdot \sqrt{\frac{t^2}{\xi^2- 2 \xi\cos t +1}}\, {\rm d}\xi \to 0, \qquad t \to 0.$$
\end{proof}

\section{Discussion}
 In this section we discuss the constant $c_{F_n}$ that appears in Proposition \ref{integralconv}.  Recall that $c_{F_n}$  is the sum of $$m_{F_n}:=1-\alpha_n\gamma  \quad {\rm and} \quad  b_{F_n}:= \int_0^1 \frac{1}{u}\left(\frac{F_n(u)}{u}-  \alpha_n\right)\, {\rm d}u.$$
Observe that the function  $F_n$ at use, enters completely in the definition  of $b_{F_n}$, while it appears in the definition of $m_{F_n}$ only through $\alpha_n$.  Here we try to give an explanation of the presence of $\gamma,$ Euler's constant in the constant $m_{F_n}$. Although the $\gamma$ constant is  related to the standard uniform distribution (as we shall see this in the sequel), it appears that no matter the form of the distributions $F_n$ that are involved, the Euler's constant remains unchanged.

\medskip

	\noindent
	Let $Y_n= \frac{1}{U_n}$, where $U_n$ are i.i.d. with law $\mathcal{U}([0,1])$ (i.e. $F(x)\equiv x$) and  take $Z_n = \left\lceil\frac{1}{U_n} \right \rceil$. Then, for $k \geq 2$,
	$$P (Z_n=k ) = P\left(\left\lceil\frac{1}{U_n} \right \rceil=k\right)= P\left(k-1< \frac{1}{U_n}\leq k \right)= P\left(\frac{1}{k}\leq U_n < \frac{1}{k-1}\right)= \frac{1}{k-1}-\frac{1}{k}.$$
	Note that this construction of $Z_n$ is in agreement with the random variables that appear in  Corollary \ref{Cor2} with $\beta_n =0$  (i.e. the L\"uroth case). Denote
	$$\psi (t)= \psi_{Y_n}(t)= \int_0^1 {\rm e}^{{\rm i}\frac{t}{u}}\,{\rm d} u; \qquad  \phi (t) = \psi_{Z_n} (t) = \sum_{k \geq 2}{\rm e}^{{\rm i}t k}\left(\frac{1}{k-1}- \frac{1}{k}\right),$$
	and
	$$g (t)=  \frac{\psi (t) - {\rm e}^{{\rm i}t } }{{\rm e}^{{\rm i}t } -1};\qquad \tilde g (t)=  \frac{\phi (t) - {\rm e}^{{\rm i}t } }{{\rm e}^{{\rm i}t } -1}.$$
	We wish to calculate (with obvious notation)
	\begin{align*}
	c_2-\tilde c_2= \lim_{t \to 0} \left( g(t) + c_1 \log (1-{\rm e}^{{\rm i}t})\right)- \left( \tilde g(t) + \tilde c_1 \log (1-{\rm e}^{{\rm i}t})\right)= \lim_{t \to 0} \left( g (t)-\tilde g (t)\right),
	\end{align*}
since $c_1 =1$ (see the proof of Corollary \ref{Cor1}) and $\tilde c_1 =1$  (see the proof of Corollary \ref{Cor2}).
	Thus,
	$$c_2-\tilde c_2=\lim_{t \to 0} \frac{\psi (t) - {\rm e}^{{\rm i}t } }{{\rm e}^{{\rm i}t } -1}-\frac{\phi (t) - {\rm e}^{{\rm i}t } }{{\rm e}^{{\rm i}t } -1} = 
	\lim_{t \to 0} \frac{\psi (t) - \phi (t) }{ {\rm i}t }  .
	$$
	We can write
	$$\psi (t) = \sum_{k \geq 2}\int_{\frac{1}{k }}^\frac{1}{k-1} {\rm e}^{{\rm i}\frac{t}{u}}\,{\rm d} u = \lim_{n \to \infty } \sum_{k  = 2}^n\int_{\frac{1}{k}}^\frac{1}{k-1} {\rm e}^{{\rm i}\frac{t}{u}}\,{\rm d} u;$$
	$$\phi (t) = \sum_{k \geq 2} {\rm e}^{{\rm i}t k}\left(\frac{1}{k}- \frac{1}{k-1}\right)= \lim_{n \to \infty } \sum_{k  = 2}^n\int_{\frac{1}{k}}^\frac{1}{k-1} {\rm e}^{{\rm it}k}\,{\rm d} u. $$
	Hence,
	\begin{align*}&
	\lim_{t \to 0}\frac{\psi (t) - \phi (t) }{ {\rm i}t } = \lim_{t \to 0}\lim_{n \to \infty }\sum_{k  = 2}^n\int_{\frac{1}{k}}^\frac{1}{k-1} \frac{ {\rm e}^{{\rm i}\frac{t}{u}} -  {\rm e}^{{\rm it}k} }{{\rm i}t}\,{\rm d}u =\lim_{n \to \infty } \lim_{t \to 0}\sum_{k  = 2}^n\int_{\frac{1}{k}}^\frac{1}{k-1} \frac{ {\rm e}^{{\rm i}\frac{t}{u}} -  {\rm e}^{{\rm it}k} }{{\rm i}t}\,{\rm d}u \\&=\lim_{n \to \infty }\sum_{k  = 2}^n\int_{\frac{1}{k}}^\frac{1}{k-1}  \lim_{t \to 0}\frac{ {\rm e}^{{\rm i}\frac{t}{u}} -  {\rm e}^{{\rm it}k} }{{\rm i}t}\,{\rm d}u = \lim_{n \to \infty }\sum_{k  = 2}^n\int_{\frac{1}{k}}^\frac{1}{k-1}  \left(\frac{1}{u}-k\right)\,{\rm d}u   \\& =  \lim_{n \to \infty }\left( \sum_{k  = 2}^n\int_{\frac{1}{k}}^\frac{1}{k-1}  \frac{1}{u}\,{\rm d}u  - \sum_{k  = 2}^n k\left(\frac{1}{k-1}- \frac{1}{k}\right)\right)= \lim_{n \to \infty }\left( \sum_{k  = 2}^n\int_{k-1}^k  \frac{1}{u}\,{\rm d}u  - \sum_{k  = 2}^n k\left(\frac{1}{k-1}- \frac{1}{k}\right)\right )\\& =  \lim_{n \to \infty }\left(\int_1^{n} \frac{1}{u}\,{\rm d}u  -\sum_{k  = 2}^n  \frac{1}{k-1}\right)=     \lim_{n \to \infty }\left( \int_1^{n} \frac{1}{u}\,{\rm d}u  -\sum_{k  = 1}^{n-1}  \frac{1}{k}\right)
	\\& =     \lim_{n \to \infty }\left(\int_1^{n} \frac{1}{u}\,{\rm d}u  -\sum_{k  = 1}^{n}  \frac{1}{k}+ \frac{1}{n}\right) = -\gamma,
	\end{align*}
	by the definition of $\gamma$.  Note that the second equality can be verified by using Moore-Osgood Theorem (see \cite{T2012}, p. 140), while the third one by the bounded convergence theorem.
	Hence
	$$c_2-\tilde c_2  =  -\gamma=\lim_{n \to \infty }\left( \int_1^{n} \frac{1}{u}\,{\rm d}u -\sum_{k  = 1}^{n}  \frac{1}{k}\right) =\lim_{n \to \infty }\left(\int_1^{n} F\left(\frac{1}{u}\right) \,{\rm d}u -\sum_{k  = 1}^{n}  F\left(\frac{1}{k}\right)\right),$$
	where $F$ is the distribution function of the $\mathcal{U}([0,1])$.  
	Since $\tilde{c_2}=1$ (Corollary \ref{Cor2}) we obtain $c_2 = 1 -\gamma$, thus
	$$ \frac{m_{F_n}}{\alpha_n}  =\frac{1}{\alpha_n}  -\gamma =\frac{1}{\alpha_n}+\lim_{n \to \infty }\left( \int_1^{n} F\left(\frac{1}{u}\right) \,{\rm d}u -\sum_{k  = 1}^{n}  F\left(\frac{1}{k}\right)\right) $$
	This means that the constant $ \frac{m_{F_n}}{\alpha_n} $ is determined only by the distribution function $F(x) \equiv x$, i.e. no matter which is  the   $F_n$ at stake, this plays the same role as $F(x) \equiv x$.
	
\newpage

 \renewcommand{\theequation}{A-\arabic{equation}}
 \renewcommand{\thetheorem}{A\arabic{theorem}}
% redefine the command that creates the equation no.
\setcounter{equation}{0}  % reset counter 
\setcounter{theorem}{0} % reset counter 
\section*{APPENDIX}  
\begin{lemma}\label{lemma1}\sl
	Consider the two constants
	$$A=\int_0^1 \frac{\sin x -x}{x^2}  \, {\rm d}x; \qquad B= \int_1^{+ \infty } \frac{\sin x}{x^2} \, {\rm d}x.$$
	We have $$A+B = 1 -\gamma,$$
	where $\gamma $ is Euler's constant.
\end{lemma}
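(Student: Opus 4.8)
The plan is to collapse each of the two integrals, by a single integration by parts, into integrals of $\frac{\cos x-1}{x}$ and $\frac{\cos x}{x}$, and then to recognise their sum as $-\gamma$ through the cosine integral $\mathrm{Ci}$, the same special function already invoked in Section~5.

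First I would treat $A$. Writing $\frac{\sin x-x}{x^{2}}=(\sin x-x)\frac{d}{dx}\bigl(-\tfrac1x\bigr)$ and integrating by parts on $[\varepsilon,1]$, with $\sin x-x$ having derivative $\cos x-1$, one gets $\int_{\varepsilon}^{1}\frac{\sin x-x}{x^{2}}\,dx=\bigl[-\tfrac{\sin x-x}{x}\bigr]_{\varepsilon}^{1}+\int_{\varepsilon}^{1}\frac{\cos x-1}{x}\,dx$. The boundary term at $1$ is $1-\sin 1$, the one at $\varepsilon$ equals $\frac{\sin\varepsilon-\varepsilon}{\varepsilon}=O(\varepsilon^{2})\to0$, and $\int_{0}^{1}\frac{\cos x-1}{x}\,dx$ is a proper (convergent) integral because $\frac{\cos x-1}{x}\to0$ as $x\to0$. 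Hence $A=(1-\sin 1)+\int_{0}^{1}\frac{\cos x-1}{x}\,dx$.

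Next I would do the same for $B$: integrating $\frac{\sin x}{x^{2}}=\sin x\cdot\frac{d}{dx}\bigl(-\tfrac1x\bigr)$ by parts on $[1,R]$ gives $\int_{1}^{R}\frac{\sin x}{x^{2}}\,dx=-\frac{\sin R}{R}+\sin 1+\int_{1}^{R}\frac{\cos x}{x}\,dx$; the last integral converges as $R\to\infty$ (Dirichlet's test, or one further integration by parts) and $\frac{\sin R}{R}\to0$, so $B=\sin 1+\int_{1}^{\infty}\frac{\cos x}{x}\,dx$. Adding the two expressions, the $\pm\sin 1$ terms cancel and $A+B=1+\int_{0}^{1}\frac{\cos x-1}{x}\,dx+\int_{1}^{\infty}\frac{\cos x}{x}\,dx$.

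It remains to identify the bracketed quantity with $-\gamma$. From the definition $\mathrm{Ci}(1)=-\int_{1}^{\infty}\frac{\cos x}{x}\,dx$ and the classical representation $\mathrm{Ci}(x)=\gamma+\log x+\int_{0}^{x}\frac{\cos t-1}{t}\,dt$ (the identity underlying the evaluation of $-t\,\mathrm{Ci}(t)$ in Section~5) one obtains $\int_{1}^{\infty}\frac{\cos x}{x}\,dx=-\gamma-\int_{0}^{1}\frac{\cos t-1}{t}\,dt$, so the two integrals cancel down to $-\gamma$ and $A+B=1-\gamma$. If a self-contained argument is preferred, one may instead note $\int_{0}^{1}\frac{\cos x-1}{x}\,dx+\int_{1}^{R}\frac{\cos x}{x}\,dx=\int_{0}^{R}\frac{\cos x-1}{x}\,dx+\log R$ and invoke the standard limit $\lim_{R\to\infty}\bigl(\int_{0}^{R}\frac{1-\cos x}{x}\,dx-\log R\bigr)=\gamma$. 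There is no genuine obstacle here; the only points deserving a line of justification are the vanishing of the boundary terms at $0$ and $\infty$ and the conditional convergence of $\int_{1}^{\infty}\frac{\cos x}{x}\,dx$.
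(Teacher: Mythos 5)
Your proof is correct and follows essentially the same route as the paper's: integrate $A$ by parts to obtain $A=1-\sin 1+\int_0^1\frac{\cos x-1}{x}\,{\rm d}x=1-\sin 1-\mathrm{Cin}(1)$, reduce $B$ to $\sin 1-\mathrm{Ci}(1)$, and let the $\sin 1$ and $\mathrm{Ci}(1)$ terms cancel via the classical identity $\mathrm{Cin}(x)=\gamma+\log x-\mathrm{Ci}(x)$ (formula 5.2.2 of Abramowitz--Stegun, the same one the paper invokes). The only, welcome, difference is that you derive $B=\sin 1+\int_1^\infty\frac{\cos x}{x}\,{\rm d}x$ by a direct integration by parts, whereas the paper quotes a table formula from Gradshteyn--Ryzhik; your version is slightly more self-contained but mathematically equivalent.
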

\begin{proof}
	We begin by transforming $A$. Denote
	$$A(x) = \int_0^x \frac{\sin t -t}{t^2}  \, {\rm d}t.$$
	For every $\epsilon >0$ and $x >0$
	\begin{align*}&
	\int_\epsilon^x \frac{\sin t -t}{t^2}  \, {\rm d}t= \int_\epsilon^x \frac{\sin t  }{t^2}  \, {\rm d}t -  \int_\epsilon^x \frac{1}{t }  \, {\rm d}t
	= -\frac{\sin t}{t}\Big|_\epsilon^x+ \int_\epsilon^x \frac{\cos t}{t}\, {\rm d}t  -  \int_\epsilon^x \frac{1}{t }  \, {\rm d}t \\&= 
	-\frac{\sin x}{x} +   \frac{\sin \epsilon } {\epsilon}+ \int_\epsilon^x \frac{\cos t-1}{t}\, {\rm d}t =  -\frac{\sin x}{x} +   \frac{\sin \epsilon } {\epsilon}- {\rm Cin} (x) +  {\rm Cin} (\epsilon),
	\end{align*}
	where 
	$$ {\rm Cin} (x) =\int _{ 0} ^x \frac{1-\cos t }{t}\, {\rm d}t.$$
	Therefore,
	$$A(x) = \lim_{\epsilon \to 0}\left(  -\frac{\sin x}{x} +   \frac{\sin \epsilon } {\epsilon}- {\rm Cin} (x) +  {\rm Cin} (\epsilon)\right) = 1 - \frac{\sin x}{x} - {\rm Cin} (x). $$
	It is well known (see \cite{AS}, formula 5.2.2) that
	$$ {\rm Cin} (x) = \gamma + \log x -  {\rm Ci} (x),\quad |\arg x| < \pi,$$
	where $\gamma$ is Euler's constant and $${\rm Ci} (x) = - \int_x ^\infty \frac{\cos t}{t} \, {\rm d}t ,\quad |\arg x| < \pi $$
	is the {\it cosine integral function.}
	We obtain
	$$A(x) =  1 - \frac{\sin x}{x} - \gamma - \log x +  {\rm Ci} (x),\quad |\arg x| < \pi ,$$
	and we conclude that
	$$A=A(1) = 1-\sin 1 - \gamma  +  {\rm Ci} (1).$$
	
	\noindent For the calculation of the constant $B$, we employ  formula 3, \$ 3.761, page 436 in  \cite{GR},  namely
	$$\int_1^\infty \frac{\sin (ax)}{x^{2n}}\, {\rm d}x= \frac{a^{2n-1}}{(2n-1)!}\left(\sum_{k=1}^{2n-1}\frac{(2n-k-1)!}{a^{2n-k}}\sin \left(a+ (k-1)\frac{\pi}{2}\right)+ (-1)^n {\rm Ci} (a)\right).$$
	Substituting $a=1$, $n=1$ we find
	$$B= \sin 1-  {\rm Ci} (1).$$
	Thus,
	$$A+B =  1-\sin 1 - \gamma  +  {\rm Ci} (1) + \sin 1-  {\rm Ci} (1) =1- \gamma \approx 0, 423.$$
\end{proof}

\begin{lemma}\label{approx1} \sl Under the assumptions of Theorem \ref{generalres} we have that
	\begin{equation*}
	\sum_{k=1}^n \left(\log (\phi_{ k} (ta_{k,n})) -(\phi_{ k}(ta_{k,n})-1) \right) \to 0, \qquad n \to \infty.
	\end{equation*}
\end{lemma}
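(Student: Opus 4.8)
The plan is to read this as a direct consequence of the linearization bound of Lemma \ref{linearizzazione}: writing $z_{k,n} := \phi_k(ta_{k,n})-1$, each summand is exactly $\log(1+z_{k,n})-z_{k,n}$, so once we know that $\max_{k\le n}|z_{k,n}|$ is eventually smaller than a fixed $c\in(0,1)$ and that $\sum_{k=1}^n|z_{k,n}|^2\to 0$, Lemma \ref{linearizzazione} gives $\big|\sum_{k=1}^n(\log(1+z_{k,n})-z_{k,n})\big|\le m(c)\sum_{k=1}^n|z_{k,n}|^2\to 0$. Throughout, $t\neq 0$ is fixed (for $t=0$ all $z_{k,n}=0$ and there is nothing to prove).

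The first step is to record the identity $\phi_k(s)={\rm e}^{{\rm i}s}+({\rm e}^{{\rm i}s}-1)g_k(s)$, immediate from the definition of $g_k$, whence $z_{k,n}=({\rm e}^{{\rm i}ta_{k,n}}-1)\big(1+g_k(ta_{k,n})\big)$ and $|z_{k,n}|\le |t|a_{k,n}\big(1+|g_k(ta_{k,n})|\big)$. By assumption \eqref{as1} there are $\delta_0>0$ and $C>0$ with $\sup_k|g_k(s)|\le C|\log|s||^{\eta}$ for $0<|s|\le\delta_0$; since $\max_{k\le n}|t|a_{k,n}\le |t|m_n\to 0$ by \eqref{equazione2}, for all large $n$ every argument $ta_{k,n}$ lies in $(0,\delta_0]$. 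Introducing $\omega_\nu(\delta):=\sup_{0<x\le\delta}x|\log x|^{\nu}$, which tends to $0$ as $\delta\to 0^+$ because $x|\log x|^{\nu}\to 0$, and substituting $x=|t|a_{k,n}\le |t|m_n$, one obtains $\max_{k\le n}|z_{k,n}|\le |t|m_n+C\,\omega_{\eta}(|t|m_n)\to 0$; fixing any $c\in(0,1)$, this makes Lemma \ref{linearizzazione} applicable for all $n$ large enough.

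For the square sum I would estimate, using $(1+|g_k(ta_{k,n})|)^2\le 2+2C^2|\log(|t|a_{k,n})|^{2\eta}$, the bound $\sum_k a_{k,n}^2\le m_n\sum_k a_{k,n}$, and $\sum_k a_{k,n}^2|\log(|t|a_{k,n})|^{2\eta}\le \big(\max_k a_{k,n}|\log(|t|a_{k,n})|^{2\eta}\big)\sum_k a_{k,n}$, that
\[
\sum_{k=1}^n|z_{k,n}|^2 \;\le\; 2t^2 m_n S \;+\; 2C^2|t|\,S\,\omega_{2\eta}(|t|m_n),
\]
where $S:=\sup_n\sum_{k=1}^n a_{k,n}<\infty$ by \eqref{equazione1}; both terms tend to $0$. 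Combining this with the previous step completes the argument. The only point requiring a little care is the competition between the logarithmic blow-up of $g_k$ near the origin and the decay of $a_{k,n}$ and $a_{k,n}^2$, but this is settled uniformly in $k$ by the elementary fact that $x|\log x|^{\nu}\to 0$, so I do not anticipate a genuine obstacle.
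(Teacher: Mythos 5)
Your argument is correct and follows essentially the same route as the paper's proof: the identity $\phi_k(ta_{k,n})-1=({\rm e}^{{\rm i}ta_{k,n}}-1)(1+g_k(ta_{k,n}))$, assumption \eqref{as1} combined with $|t|m_n\to 0$ to control the logarithmic growth of $g_k$, and Lemma \ref{linearizzazione} applied to $z_{k,n}=\phi_k(ta_{k,n})-1$ with the square sum bounded via $\sum_k a_{k,n}^2\le m_n\sum_k a_{k,n}$. Your explicit modulus $\omega_\nu$ is just a cleaner packaging of the paper's remark that $x\mapsto x^2|\log|x||^{2\eta}$ is increasing near $0$, so there is nothing substantive to add.
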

\begin{proof} Recall that
	$$\phi _{ k}(ta_{k,n})-1 = \left({\rm e}^{{\rm i}ta_{k,n}}-1\right)\left(1+ g_{ k}(ta_{k,n})\right).$$
	By assumption \eqref{as1}, there exists $\delta>0$ such that, for any $k$, the function $t \mapsto  \frac{|g_{ k}(t)|}{|\log|t||^\eta}$ is bounded by a constant $M$ (not depending on $k$) for $|t|< \delta.$ By \eqref{equazione2} there exists $n_0$ such that $|t|a_{k,n}\leq |t| m_n < \delta$ for $n>n_0$ and $k=1, \dots, n$. Thus
	\begin{align*}&
	\left|\phi_{ k}(ta_{k,n})-1\right|^2= \left|{\rm e}^{{\rm i}ta_{k,n}}-1\right|^2\left|1+ g_{ k}(ta_{k,n})\right|^2\leq 2\left|{\rm e}^{{\rm i}ta_{k,n}}-1\right|^2\left(1+ \left|g_{ k}(ta_{k,n})\right|^2\right)\\ 
	&\leq 2\left|{\rm e}^{{\rm i}ta_{k,n}}-1\right|^2 \left(1+ M\left|\log(|t| a_{k,n})\right|^{2 \eta} \right) \leq C t^2 a^2_{k,n}\left(1+  M\left|\log(|t| a_{k,n})\right|^{2 \eta}\right)\\ 
&\leq C t^2 m^2_{n}\left(1+  M\left|\log (|t| m_{n})\right|^{2 \eta}\right),
	\end{align*}
	for sufficiently large $n$ and for $k =1, \dots, n$; the last inequality holds true since  the function $ x \mapsto x^2 \left|\log |x|\right|^{2 \eta}$ is increasing in a neighborhood of 0 and the relation
	$$\lim_{z \to 0} \frac{{\rm e}^z-1}{z}=1$$ 
	implies that, in a neighborhood of $0$,  $$\left|{\rm e}^z-1\right| = \left|\frac{{ \rm e}^z-1 }{z}\right|\cdot |z|\leq C |z|. $$
	Hence, by \eqref{equazione2}, we deduce
	$$\lim_{n \to \infty}\sup_k\left|\phi_{ k}(ta_{k,n})-1\right|=0.$$
	Using Lemma \ref{linearizzazione}, fix $c$ and take $n$ large enough ($n> n_0$) in order that $ \sup_k\left|\phi_{ k}(ta_{k,n})-1\right| < c$. Then Lemma \ref{linearizzazione} applies with $z = (\phi_{ k}(ta_{k,n})-1) $, ($k =1, 2, \dots, n$ ) and we find
\begin{align*}&
	\left|\sum_{k=1}^n \left\{\log (\phi_{ k} (ta_{k,n})) -(\phi_{ k}(ta_{k,n})-1) \right\}\right| \leq\sum_{k=1}^n  \left|\log (\phi_{ k}(ta_{k,n})) -(\phi_{ k}(ta_{k,n})-1)  \right| \\
	&\leq  m(c) \sum_{k=1}^n  \left|\phi_ {k}(ta_{k,n})-1\right|^2
	\leq 
	C\sum_{k=1}^n   a^2_{k,n}\left(1+  M\left|\log(|t| a_{k,n})\right|^{2  \eta} \right)\\
	& = C\left(\sum_{k=1}^n   a^2_{k,n} +  \sum_{k=1}^n   a^2_{k,n}\left|\log (|t| a_{k,n})\right|^{2  \eta}\right)\leq C \left(m_n \sum_{k=1}^n   a_{k,n}+ m_n \left|\log (|t|m_n) \right|^{2  \eta}\sum_{k=1}^n   a_{k,n} \right)\\ 
& \leq C \left(m_n + m_n  \left|\log(|t|m_n)  \right|^{2  \eta}  \right)\to 0, \qquad n \to \infty. 
	\end{align*}	
	the last inequality holds true by \eqref{equazione1} and because the function $ x \mapsto x  \left|\log |x|\right|^{2 \eta}$ is increasing in a  neighborhood of 0.	
\end{proof}

\begin{lemma}
	\label{approx2} \sl Under the assumptions of Theorem \ref{generalres} we have 
	$$\left|\sum_{k=1}^n\left({\rm e}^{{\rm i} ta_{k,n}}    -1 -{\rm i}ta_{k,n}- \frac{1}{2}({\rm i} ta_{k,n} )^2	\right)\right| \to 0, \qquad n \to \infty.$$
\end{lemma}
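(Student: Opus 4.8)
The plan is to reduce the statement to the elementary Taylor estimate for the complex exponential. Recall that for every real $x$ one has
$$\left|{\rm e}^{{\rm i}x} - 1 - {\rm i}x - \tfrac{1}{2}({\rm i}x)^2\right| \leq \frac{|x|^3}{6},$$
which is the case $m=2$ of the standard bound $\bigl|{\rm e}^{{\rm i}x}-\sum_{j=0}^{m}({\rm i}x)^j/j!\bigr|\leq |x|^{m+1}/(m+1)!$, obtained from the integral form of the remainder. First I would apply this with $x = t a_{k,n}$ for each $k=1,\dots,n$ and invoke the triangle inequality to get
$$\left|\sum_{k=1}^n\left({\rm e}^{{\rm i}ta_{k,n}} - 1 - {\rm i}ta_{k,n} - \tfrac{1}{2}({\rm i}ta_{k,n})^2\right)\right| \leq \sum_{k=1}^n \frac{|t|^3 a_{k,n}^3}{6}.$$

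Next I would control the right-hand side by pulling out the largest weight. Since $0 < a_{k,n} \leq m_n$ for all $k\leq n$, we have $a_{k,n}^3 \leq m_n^2\, a_{k,n}$, hence
$$\sum_{k=1}^n \frac{|t|^3 a_{k,n}^3}{6} \leq \frac{|t|^3}{6}\, m_n^2 \sum_{k=1}^n a_{k,n} \leq \frac{|t|^3}{6}\, S\, m_n^2,$$
where $S := \sup_n \sum_{k=1}^n a_{k,n}$ is finite by assumption \eqref{equazione1}. Since $m_n \to 0$ by assumption \eqref{equazione2} and $t$ is fixed, the last expression tends to $0$ as $n\to\infty$, giving the claim.

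There is really no obstacle in this argument; the only points requiring a bit of care are using the correct (cubic) order of the Taylor remainder rather than the quadratic one, and observing that the fixed parameter $t$ contributes only the harmless constant $|t|^3/6$. If one prefers to avoid the explicit remainder bound, the same conclusion follows from $\sum_k a_{k,n}^3 \leq m_n\sum_k a_{k,n}^2 \leq m_n\bigl(\sum_k a_{k,n}\bigr)^2 \leq S^2 m_n \to 0$, but the estimate above is the cleanest route.
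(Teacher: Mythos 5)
Your proof is correct and follows essentially the same route as the paper's: a cubic bound on the Taylor remainder of ${\rm e}^{{\rm i}ta_{k,n}}$, followed by $a_{k,n}^3 \leq m_n^2 a_{k,n}$ and assumptions \eqref{equazione1}--\eqref{equazione2}. The only (harmless) difference is that you use the globally valid explicit constant $|x|^3/6$, whereas the paper derives a constant $C$ valid only in a neighborhood of $0$ from the limit $({\rm e}^z-1-z-\tfrac12 z^2)/z^3 \to \tfrac16$ and therefore must first restrict to $n$ large enough that $|t|m_n$ is small.
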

\begin{proof}
	Recall that
	\begin{equation} \label{equazione5}
	\lim_{z \to 0}\frac{{\rm e}^{z }-1 - z- \frac{1}{2}z^2}{z^3}= \frac{1}{6} .
	\end{equation}
	This implies  that, in a neighborhood of 0, say $|z|<\delta$,
	$$\left|{\rm e}^{z }-1 - z- \frac{1}{2}z^2\right|= \left|\frac{{\rm e}^{z }-1 - z- \frac{1}{2}z^2}{z^3}\right|\cdot |z|^3\leq C|z|^3. $$
	Let $n_0$ be such that $|t|m_n < \delta$  for every $n> n_0$ ($n_0$ exists by   assumption \eqref{equazione2}). Then $|t|a_{k,n} \leq |t| m_n < \delta$  for $k=1, \dots, n$, and, by the above,
	\begin{align*}
	\left |{\rm e}^{{\rm i} ta_{k,n}}    -1 -{\rm i}ta_{k,n}- \frac{1}{2}({\rm i} ta_{k,n} )^2	 \right|\leq C  |t|^3a_{k,n}  ^3.
	\end{align*}
	Therefore,
	\begin{align*}
	\left|\sum_{k=1}^n\left({\rm e}^{{\rm i} ta_{k,n}}    -1 -{\rm i}ta_{k,n}- \frac{1}{2}({\rm i} ta_{k,n} )^2	\right)\right| \leq    |t|^3  C \sum_{k=1}^n a^3_{k,n} 
	\leq  |t|^3  C  m^2_n  \sum_{k=1}^n a _{k,n} \to 0, \qquad n \to \infty,
	\end{align*}
	by assumption \eqref{equazione1}, which concludes the proof.
\end{proof}

 \begin{lemma}\label{approx3} \sl Under the assumptions of Theorem \ref{generalres}, 
	$$\sum_{k=1}^n a_{k,n}c_{1,k} \left(\log (1-{\rm e}^{{\rm i}t a_{k,n}})-\log (-{\rm i}t a_{k,n})\right)\ \to 0, \qquad n \to \infty.$$
\end{lemma}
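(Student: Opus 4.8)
The plan is to reduce the whole statement to the elementary estimate
$\log(1-{\rm e}^{{\rm i}\tau})-\log(-{\rm i}\tau)=O(\tau)$ as $\tau\to 0$, and then to sum with the usual ``$m_n\to 0$'' packaging used throughout the paper. First I would record, exactly as in the proof of Corollary \ref{Cor1}, that for $\tau$ in a punctured neighbourhood of $0$
$$\log(1-{\rm e}^{{\rm i}\tau})-\log(-{\rm i}\tau)=\log\!\left(1+\frac{{\rm e}^{{\rm i}\tau}-1-{\rm i}\tau}{{\rm i}\tau}\right),$$
the logarithms being taken consistently so that the right-hand side is the principal logarithm of a quantity tending to $1$ (the ratio $({\rm e}^{{\rm i}\tau}-1)/({\rm i}\tau)$ is continuous, nonzero and $\to 1$ near $\tau=0$). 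Since $({\rm e}^{{\rm i}\tau}-1-{\rm i}\tau)/({\rm i}\tau)=O(\tau)$, an application of Lemma \ref{linearizzazione} with this $z$ (which lies in the disc of radius $c$ once $|\tau|$ is small) yields a constant $C$ and a $\delta>0$ with
$$\bigl|\log(1-{\rm e}^{{\rm i}\tau})-\log(-{\rm i}\tau)\bigr|\le C|\tau|,\qquad |\tau|<\delta.$$

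Next I would fix $t\in\mathbb{R}$ and use \eqref{equazione2}: there is $n_0$ such that for $n>n_0$ and all $k\le n$ one has $|t|\,a_{k,n}\le |t|\,m_n<\delta$, so the previous bound applies with $\tau=t a_{k,n}$. Using in addition that $c_{1,k}\ge 0$, this gives
$$\left|\sum_{k=1}^n a_{k,n}c_{1,k}\bigl(\log(1-{\rm e}^{{\rm i}t a_{k,n}})-\log(-{\rm i}t a_{k,n})\bigr)\right|\le C|t|\sum_{k=1}^n a_{k,n}^{2}c_{1,k}.$$
Finally I would bound $\sum_{k=1}^n a_{k,n}^{2}c_{1,k}\le m_n\sum_{k=1}^n a_{k,n}c_{1,k}$; by \eqref{equazione4} the partial sums $\sum_{k=1}^n a_{k,n}c_{1,k}$ converge (to $\ell$) and hence are bounded by some $S'<\infty$, so the right-hand side is at most $C|t|\,S'\,m_n$, which tends to $0$ by \eqref{equazione2}. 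This proves the Lemma.

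I do not expect any serious obstacle here; the only point requiring a little care is the branch bookkeeping in the first step, namely checking that $\log(1-{\rm e}^{{\rm i}\tau})$ and $\log(-{\rm i}\tau)$ are chosen so that their difference is genuinely the logarithm of a quantity close to $1$ — but this is already implicit in the proof of Corollary \ref{Cor1}, where the same identity is exploited, so one may simply invoke it. Everything else is the routine bounded-weights argument used repeatedly in this paper.
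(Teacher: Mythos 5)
Your proof is correct and follows essentially the same route as the paper: the identity $\log(1-{\rm e}^{z})-\log(-z)=\log\bigl(1+({\rm e}^{z}-1-z)/z\bigr)$, Lemma \ref{linearizzazione} applied to $z'=({\rm e}^{z}-1-z)/z=O(|z|)$ to get the bound $C|\tau|$, and then the standard summation using $a_{k,n}\le m_n$, the nonnegativity of $c_{1,k}$, and assumptions \eqref{equazione2} and \eqref{equazione4}. No gaps.
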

\begin{proof}
Let $z = {{\rm i}\tau}$. For $\tau$ in a neighborhood of $0$ we can write  
	\begin{align*}&
	\log (1-{\rm e}^z)-\log (-z) = \log \left( \frac{1-{\rm e}^z}{-z}\right) = \log \left( \frac{ {\rm e}^z-1}{ z}\right)= \log \left( 1+ \frac{ {\rm e}^z-1 -z}{ z}\right).
	\end{align*}
	Notice that
	$$\lim_{z \to 0} \frac{ {\rm e}^z-1 -z}{ z}=0,$$
	so that for fixed $c$  there exists $\delta_1 >0$ such that, for $|z|< \delta_1$, we have
	$$\left| \frac{ {\rm e}^z-1 -z}{ z}\right|  <c.$$
	It follows from   Lemma \ref{linearizzazione} that,  for $|z|< \delta _1,$
	$$\left|\log (1-{\rm e}^z)-\log (-z) -\frac{ {\rm e}^z-1 -z}{ z}\right|\leq m(c) \left| \frac{ {\rm e}^z-1 -z}{ z}\right| ^2. $$
	Now, since
	$$\lim_{z \to 0} \frac{ {\rm e}^z-1 -z}{ z^2}= \frac{1}{2}, $$
	there exists $\delta_2$ such that, for $|z|< \delta_2$ , the function $$z \mapsto \frac{ {\rm e}^z-1 -z}{ z^2}$$ is bounded. Thus,
		$$ \left| \frac{ {\rm e}^z-1 -z}{ z}\right| = \left| \frac{ {\rm e}^z-1 -z}{ z ^2}\right| \cdot |z|\leq C |z| $$ 
		which leads to
		$$\left|\log (1-{\rm e}^z)-\log (-z) -\frac{ {\rm e}^z-1 -z}{ z}\right|\leq C |z|^2$$
		if $|z|< \delta:= \delta_1 \wedge \delta_2.$ Therefore, if $|z|< \delta$, we get 
		\begin{align}\label{eq6}
		\left|\log (1-{\rm e}^z)-\log (-z) \right|\leq \left|\log (1-{\rm e}^z)-\log (-z) -\frac{ {\rm e}^z-1 -z}{ z}\right|+ \left| \frac{ {\rm e}^z-1 -z}{ z}\right| \leq C(|z|+ |z|^2).
		\end{align}
		Let $n_0$ be such that,  for $n > n_0$, $ |{\rm i} t m_n| =|t|m_n< \delta$  ($n_0$ exists by  assumption  \eqref{equazione2}). Then $|{\rm i} |t| a_{k,n}|= |t|a_{k,n} \leq |t|m_n< \delta$, so that, for every  $n> n_0$ and for $k=1, \dots, n$ we have

		\begin{align*}
		&
		\left|\sum_{k=1}^n a_{k,n} c_{1,k} \left(\log (1-{\rm e}^{{\rm i}t a_{k,n}})-\log (-{\rm i}t a_{k,n})\right) \right| \\
		&\leq C \sum_{k=1}^n  a_{k,n}c_{1,k} ( |t| a_{k,n}+ |t|^2 a^2_{k,n})
		\leq  C( |t| m_n + |t|^2 m_n^2)\sum_{k=1}^n a_{k,n} c_{1,k}  \to 0, \qquad n \to \infty
		\end{align*}
		due to \eqref{equazione2} and \eqref{equazione4}.
	\end{proof}

\begin{lemma} \sl \label{approx4}Under the assumptions of Theorem \ref{generalres} we have,
	\[
	\sum_{k=1}^{n}a_{k,n}\left(g_k(ta_{k,n}) + c_{1,k} \log(1-e^{ita_{k,n}}) -  c_{2,k} \right)\to 0\mbox{ as }n\to \infty.
	\]
\end{lemma}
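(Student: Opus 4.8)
The plan is to read the statement off almost directly from hypothesis (ii) of Theorem~\ref{generalres}, i.e.\ from \eqref{as2}. First I would introduce the shorthand
$$r_n(t) := g_n(t) + c_{1,n}\log(1-{\rm e}^{{\rm i}t}) - c_{2,n},$$
so that \eqref{as2} asserts precisely that $\sup_n |r_n(t)| \to 0$ as $t \to 0$, and the quantity to be controlled is simply $\sum_{k=1}^n a_{k,n}\, r_k(ta_{k,n})$.

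Next I would fix $\varepsilon>0$ and use \eqref{as2} to choose $\delta>0$ such that $\sup_n |r_n(\tau)|<\varepsilon$ whenever $|\tau|<\delta$. By \eqref{equazione2} there is an $n_0$ with $|t|\,m_n<\delta$ for all $n>n_0$; since $0<a_{k,n}\leq m_n$ for every $k\leq n$, this forces $|t a_{k,n}|<\delta$, and hence $|r_k(t a_{k,n})|<\varepsilon$ for all $k=1,\dots,n$ and all $n>n_0$. The essential point here is that the convergence in \eqref{as2} is \emph{uniform in the index}, so the single threshold $\delta$ works simultaneously for every function $r_k$, even though the argument $ta_{k,n}$ varies with $k$.

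Finally I would estimate
$$\left|\sum_{k=1}^n a_{k,n}\, r_k(ta_{k,n})\right|\leq \sum_{k=1}^n a_{k,n}\,|r_k(t a_{k,n})|\leq \varepsilon \sum_{k=1}^n a_{k,n}\leq \varepsilon S,$$
where $S:=\sup_n\sum_{k=1}^n a_{k,n}<\infty$, this supremum being finite because by \eqref{equazione1} the partial sums $\sum_{k=1}^n a_{k,n}$ converge to $\kappa$ and are therefore bounded. Since $\varepsilon>0$ was arbitrary, the sum tends to $0$, which is the assertion of the lemma.

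There is no real obstacle here: the one thing to be careful about is exploiting the uniformity in $n$ in \eqref{as2}, which is exactly what permits substituting the array entries $ta_{k,n}$ for $t$ and still obtaining a bound that is uniform in $k$ once $m_n$ is small; beyond that the proof is just the triangle inequality together with the boundedness of $\sum_k a_{k,n}$.
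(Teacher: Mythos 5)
Your proof is correct and is essentially the paper's own argument: uniformity in $k$ from \eqref{as2}, the bound $|ta_{k,n}|\leq |t|m_n\to 0$ from \eqref{equazione2}, and the triangle inequality together with the boundedness of $\sum_k a_{k,n}$ from \eqref{equazione1}. (If anything, your route is slightly more direct, since the paper takes a small detour by verifying smallness of $|{\rm e}^{{\rm i}ta_{k,n}}-1|$ via monotonicity of $x\mapsto 1-\cos x$ before applying \eqref{as2}.)
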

\begin{proof}
	Note that
	\begin{align*}
	&\left| \sum_{k=1}^{n}a_{k,n}\left(g_k(ta_{k,n}) + c_{1,k} \log(1-e^{ita_{k,n}})- c_{2,k} \right)\right|
	\leq  \sum_{k=1}^{n}a_{k,n}\left|g_k(ta_{k,n}) + c_{1,k} \log(1-e^{{\rm i}ta_{k,n}})- c_{2,k} \right|.
	\end{align*}
	We prove that the last quantity goes to 0, as $n \to \infty.$    Assumption \eqref{as2} means that, for every  $\epsilon >0$, there exists $\delta >0 $ such that
	$$\big|g_k(t) +  c_{1,k} \log \big(1-e^{{\rm i}t}\big) - c_{2,k} \big|< \epsilon$$
	for every $t$ with $|t|< \delta $ and for every $k$.
	By assumption \eqref{equazione2},  there exists $n_0$ such that, for every $n>n_0$, $$\sqrt{2(1- \cos (tm_n) )}= \big|{\rm e}^{{\rm i}t m_n} -1\big|< \delta\qquad {\rm and } \qquad | t m_n|=|{\rm i}t m_n|< \rho,$$ 
where $\rho>0$  is such that the function $x \mapsto 1 - \cos x$ is increasing  for $|x|< \rho$; hence, for $n>n_ {0}$ and  $k=1, \dots, n$
	$$\big|{\rm e}^{{\rm i}t a_{k,n}} -1\big| =\sqrt{2(1-\cos (t a_{k,n}))}  \leq\sqrt{2(1-\cos (tm_n ))}= \big|{\rm e}^{{\rm i}t m_n} -1\big|<   \delta ,$$
	and thereby
	$$ \left| g_{ k}(ta_{k,n}) + c_{1,k} \log(1-e^{{\rm i}ta_{k,n}})- c_{2,k}\right|< \epsilon.$$
	As a consequence, for $n > n_{0} $,
	\begin{align*}
	\sum_{k=1}^{n}a_{k,n}\left| g_{ k}(ta_{k,n}) + c_{1,k}\log(1-e^{{\rm i}ta_{k,n}})-c_{2,k} \right|\leq \epsilon\sum_{k=1}^{n}a_{k,n},
	\end{align*}
	and convergence to 0 is ensured by the arbitrariness of $\epsilon$ and assumption \eqref{equazione1}.	
\end{proof}

\begin{lemma} \sl
	\label{approx5}Under the assumptions of Theorem \ref{generalres} we have,
	$$\sum_{k=1}^n \left( e^{{\rm i}ta_{k,n}} - 1 - {\rm i}ta_{k,n} - \frac{1}{2} ({\rm i}ta_{k,n})^2 \right) g_k (ta_{k,n} ) \to 0, \qquad n \to \infty.$$
\end{lemma}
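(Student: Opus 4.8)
The plan is to repeat, almost verbatim, the bookkeeping of Lemma~\ref{approx2}, now carrying the extra factor $g_{k}(ta_{k,n})$ and controlling it by the logarithmic bound of assumption~\eqref{as1}. The point is simply that the cubic Taylor remainder of ${\rm e}^{z}$ beats the logarithmic growth of $g_{k}$, uniformly in $k$, once the arguments $ta_{k,n}$ are small, which assumption~\eqref{equazione2} guarantees for large $n$.

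First I would invoke the limit \eqref{equazione5} to get $\delta_{1}>0$ and a constant $C$ with $\bigl|{\rm e}^{z}-1-z-\tfrac12 z^{2}\bigr|\le C|z|^{3}$ for $|z|<\delta_{1}$, and then assumption~\eqref{as1} to get $\delta_{2}>0$ and $M>0$ with $\sup_{k}|g_{k}(t)|\le M\bigl|\log|t|\bigr|^{\eta}$ for $0<|t|<\delta_{2}$. Fixing $t\neq0$ (the case $t=0$ being trivial) and setting $\delta=\delta_{1}\wedge\delta_{2}$, assumption~\eqref{equazione2} yields $n_{0}$ such that $|t|a_{k,n}\le|t|m_{n}<\delta$ for all $n>n_{0}$ and $k\le n$, so that both estimates apply on this range. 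Taking $z={\rm i}ta_{k,n}$ and multiplying by $|g_{k}(ta_{k,n})|$, for $n>n_{0}$ I obtain
$$\Bigl|\sum_{k=1}^{n}\bigl({\rm e}^{{\rm i}ta_{k,n}}-1-{\rm i}ta_{k,n}-\tfrac12({\rm i}ta_{k,n})^{2}\bigr)g_{k}(ta_{k,n})\Bigr|\le CM|t|^{3}\sum_{k=1}^{n}a_{k,n}^{3}\,\bigl|\log(|t|a_{k,n})\bigr|^{\eta}.$$

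The only step requiring a little care is extracting a single small factor from the right-hand side; this is what I would flag as the main (mild) obstacle. Here I would use that, for fixed $t$, the map $x\mapsto x^{2}\bigl|\log(|t|x)\bigr|^{\eta}$ is increasing on a right neighbourhood of $0$ — its derivative equals $x\bigl|\log(|t|x)\bigr|^{\eta-1}\bigl(2|\log(|t|x)|-\eta\bigr)$, which is positive once $|t|x$ is small — so, enlarging $n_{0}$ if necessary, $a_{k,n}^{2}\bigl|\log(|t|a_{k,n})\bigr|^{\eta}\le m_{n}^{2}\bigl|\log(|t|m_{n})\bigr|^{\eta}$ whenever $n>n_{0}$ and $k\le n$. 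Writing $a_{k,n}^{3}=a_{k,n}\cdot a_{k,n}^{2}$ and pulling this bound out of the sum, the right-hand side is at most $CM|t|^{3}\,m_{n}^{2}\bigl|\log(|t|m_{n})\bigr|^{\eta}\sum_{k=1}^{n}a_{k,n}\le CM|t|^{3}S\,m_{n}^{2}\bigl|\log(|t|m_{n})\bigr|^{\eta}$, where $S=\sup_{n}\sum_{k=1}^{n}a_{k,n}<\infty$ by \eqref{equazione1}; since $m_{n}\to0$ forces $m_{n}^{2}\bigl|\log(|t|m_{n})\bigr|^{\eta}\to0$, the whole expression tends to $0$. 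I do not anticipate any genuine difficulty beyond this: the argument is a one-line Taylor estimate followed by exactly the "monotone in a neighbourhood of $0$" trick already used in Lemmas~\ref{approx1} and~\ref{approx2}.
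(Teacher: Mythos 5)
Your proposal is correct and follows essentially the same route as the paper's proof: the cubic Taylor bound from \eqref{equazione5}, the logarithmic bound on $g_k$ from \eqref{as1}, and the monotonicity of $x\mapsto x^2\left|\log(|t|x)\right|^{\eta}$ near $0$ to extract the factor $m_n^2\left|\log(|t|m_n)\right|^{\eta}\to 0$. Your explicit verification of the derivative sign is a small bonus over the paper, which merely asserts the monotonicity.
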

\begin{proof}
	By assumption \eqref{as1} and relation \eqref{equazione5} we have
	\begin{align*}&
	\left|\sum_{k=1}^n \left( e^{{\rm i}ta_{k,n}} - 1 - {\rm i}ta_{k,n} - \frac{1}{2} ({\rm i}ta_{k,n})^2 \right) g_{ k}  (ta_{k,n} ) \right|\\ 
	&\leq 
	\sum_{k=1}^n \left| e^{{\rm i}ta_{k,n}} - 1 - {\rm i}ta_{k,n} - \frac{1}{2} ({\rm i}ta_{k,n})^2 \right|\cdot \left| g_{ k}  (ta_{k,n} ) \right|\\ 
	&=
	\sum_{k=1}^n  \left|\frac{  e^{{\rm i}ta_{k,n}} - 1 - {\rm i}ta_{k,n} - \frac{1}{2} ({\rm i}ta_{k,n})^2  }{ ({\rm i}ta_{k,n})^3}\right| \cdot \left| g_{ k}  (ta_{k,n} ) \right| \cdot |t|^3 a^3_{k,n}\\
	& \leq C{ |t|^3} \sum_{k=1}^n a^3_{k,n}|\log(|t|  a_{k,n})|^{\eta}\leq C{ |t|^3}\left(\max_{1 \leq k \leq n}a^2_{k,n} |\log  (|t| a_{k,n} )|^{\eta}\right){ \sum_{k=1}^n a_{k,n}}\\&  
	\leq C {  |t|}\left(m^2_{n} |\log^{\eta} (m_{n} )|\right)\sum_{k=1}^n a_{k,n}\leq C  { |t|}\left(m^2_{n} |\log (m_{n} )|^{\eta}\right) \to 0, \qquad n \to \infty,
	\end{align*} 
	(notice that the function $x \mapsto x  ^2 \left|\log |x| \right|^{\eta}$ is increasing in the neighborhood of 0).
\end{proof}

\end{document}